\def\cqedsymbol{\ifmmode$\lrcorner$\else{\unskip\nobreak\hfil
\penalty50\hskip1em\null\nobreak\hfil$\lrcorner$
\parfillskip=0pt\finalhyphendemerits=0\endgraf}\fi}
\let\le\leqslant
\let\ge\geqslant
\let\leq\leqslant
\let\geq\geqslant
\let\OLDthebibliography\thebibliography
\renewcommand\thebibliography[1]{
  \OLDthebibliography{#1}
  \setlength{\parskip}{0pt}
  \setlength{\itemsep}{0pt plus 0.3ex}
}%
   \def\MR#1{}
\newcommand\mail[1]{\href{mailto:#1}{\texttt{#1}}}
\title{The Erd\H{o}s-P\'{o}sa property for circle graphs as~vertex-minors}
\author{Rutger Campbell\thanks{Supported by the Institute for Basic Science (IBS-R029-C1).}}
\affil{Discrete Mathematics Group, Institute for Basic Science (IBS), Daejeon, South Korea.}
\author{J.~Pascal Gollin\thanks{Supported in part by the Slovenian Research and Innovation Agency (research project N1-0370) and in part by the Institute for Basic Science (IBS-R029-Y3).}}
\affil{FAMNIT, University of Primorska, Koper, Slovenia.}
\author[1]{Meike Hatzel\textsuperscript{\textasteriskcentered}}
\author[3,1]{O-joung Kwon\textsuperscript{\textasteriskcentered}\thanks{Supported by the National Research Foundation of Korea (NRF) grant funded by the Ministry of Science and ICT (No.~RS-2023-00211670)}}
\affil{Department of Mathematics, Hanyang University, Seoul, South Korea.}
\author{Rose McCarty\thanks{Supported by the National Science Foundation under Grant No.~DMS-2202961.}}
\affil{School of Mathematics and School of Computer Science, Georgia Institute of Technology, USA. }
\author[1]{Sang-il Oum\textsuperscript{\textasteriskcentered}}
\author{Sebastian Wiederrecht\textsuperscript{\textasteriskcentered}}
\affil{School of Computing, KAIST, Daejeon, South Korea.}
\affil[ ]{\small E-mail: 
\mail{rutger@ibs.re.kr},
\mail{pascal.gollin@famnit.upr.si},
\mail{research@meikehatzel.com},
\mail{ojoungkwon@hanyang.ac.kr},
\mail{rmccarty3@gatech.edu},
\mail{sangil@ibs.re.kr},
\mail{wiederrecht@kaist.ac.kr}
}
\begin{document}

\maketitle

\begin{abstract}
We prove that for any circle graph $H$ with at least one edge and for any positive integer $k$, there exists an integer {$t=t(k,H)$} so that every graph $G$ either has a vertex-minor isomorphic to the disjoint union of $k$ copies of $H$, or has a $t$-perturbation with no vertex-minor isomorphic to $H$. Using the same techniques, we also prove that for any planar multigraph $H$, every binary matroid either has a minor isomorphic to the cycle matroid of $kH$, or is a low-rank perturbation of a binary matroid with no minor isomorphic to the cycle matroid of $H$.
\end{abstract}

\section{Introduction}

In 1965, Erd\H{o}s and P\'{o}sa~\cite{EP1965} proved that every graph contains $k$ vertex-disjoint cycles or a vertex set of size at most $\mathcal{O}(k \log k)$ such that after its removal, the remaining graph is acyclic.
Based on this, a graph $H$ is said to have the \emph{Erd\H{o}s-P\'{o}sa property} if each graph~$G$ contains $k$ vertex-disjoint copies of $H$ (a \textsl{packing}), or has a set of size bounded by a function of $k$ (a \textsl{hitting set}) such that removing it from $G$ results in a graph not containing~$H$.
Here, one can be flexible with the notion of what it means for one graph to \textsl{contain} another.
The question has been asked for several containment relations, for example, minors~\cite{simonovits1967,graphMinors5}, immersions~\cite{liu2015}, and topological minors~\cite{thomassen1988}. Other objects like directed graphs~\cite{reed1996,packingdirectedcircuits1996}, and other settings, such as coarse graph theory~\cite{EPfarCycles, EPlongCycles}, have also been considered. 
We refer the reader to the dynamic survey by Raymond~\cite{EPwebpageRaymond} for an overview.

A result by Robertson and Seymour~\cite{graphMinors5} says that planar graphs are precisely the graphs that have the Erd\H{o}s-P\'{o}sa property with respect to minors. This result has received a good deal of attention~\cite{tighterEPplanar}, and now we even know efficient bounds that match the original theorem of Erd\H{o}s and P\'{o}sa~\cite{tightEPplanar}.
In this paper, we consider the containment relation of \emph{vertex-minors} and prove an Erd\H{o}s-P\'{o}sa property for \emph{circle graphs}.

Roughly, the \emph{vertex-minors} of a graph $G$ are the graphs that can be obtained from $G$ by deleting vertices and by performing certain ``local moves'' called local complementations; \emph{locally complementing} at a vertex $v$ replaces the induced subgraph on the neighborhood of $v$ by its complement. Many interesting graph properties are invariant under local complementation, such as the interlace polynomial~\cite{interlaceVM}, the level of entanglement of an associated quantum state~\cite{VMCliffordTrans}, and the cut-rank function~\cite{RWAndVM}, which gives a notion of graph connectivity inspired by rank in the adjacency matrix.

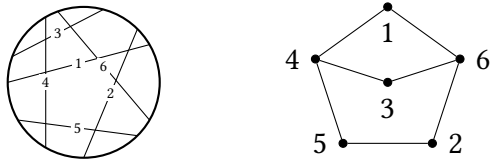
\begin{figure}
    \centering
    \begin{tikzpicture}
        \tikzstyle{v}=[circle, draw, solid, fill=black, inner sep=0pt, minimum width=3pt]
        \tikzstyle{w}=[solid, fill=white, inner sep=1pt]
        \draw [thick](0,0) circle (1);
        \draw (30:1)--(180:1) node [pos=0.5,w]{\tiny 1};
        \draw (45:1)--(-90:1) node [pos=0.5,w]{\tiny 2};
        \draw (75:1)--(160:1) node [pos=0.5,w]{\tiny 3};
        \draw (-45:1)--(210:1) node [pos=0.5,w]{\tiny 4} node [pos=0.5,w]{\tiny 5};
        \draw (110:1)--(-30:1) node [pos=0.5,w]{\tiny 6};
        \draw (120:1)--(-120:1) node [pos=0.5,w]{\tiny 4};
        \begin{scope}[xshift=4cm]
            \tikzstyle{v}=[circle, draw, solid, fill=black, inner sep=0pt, minimum width=3pt]
            \node [v,label=below:1] (v1) at (90:1) {};
            \node [v,label=left:4] (v4) at (90+72:1){};
            \node [v,label=left:5] (v5) at (90+72*2:1){};
            \node [v,label=right:2] (v2) at (90+72*3:1){};
            \node [v,label=right:6] (v6) at (90+72*4:1){};
            \node [v,label=below:3] (v3) at (0,0){};
            \draw (v1)--(v4)--(v5)--(v2)--(v6)--(v1);
            \draw (v4)--(v3)--(v6);
        \end{scope}
    \end{tikzpicture}
    \caption{A chord diagram and its corresponding circle graph.}
    \label{fig:circle}
\end{figure}

A graph is a \emph{circle graph} if there exists a collection of chords on a circle so that the vertices are in bijection with the chords, and two vertices are adjacent if and only if their chords have non-empty intersection; see \cref{fig:circle}. 
There is a connection between planar graphs/circle graphs and minors/vertex-minors, respectively. 
Essentially, there is a common generalization of the two settings via isotropic systems. This connection was discovered by Bouchet~\cite{isoSystems, graphicIsoSystems} and de Fraysseix~\cite{VMandInterlacement}. We refer the reader to~\cite{circlePivoting}.

For vertex-minors, we need a different notion that is analogous to hitting sets.
Given an integer $t$, we say that a graph $G_1$ is a \emph{$t$-perturbation} of a graph $G_2$ if they have the same vertex set, and there exists a graph on $\abs{V(G_1)}+t$ vertices that contains both $G_1$ and $G_2$ as vertex-minors.
For convenience, given a graph $H$ and an integer $k$, we write $kH$ for the graph that is the disjoint union of $k$ copies of $H$.
We prove the following theorem.

\begin{restatable}{theorem}{maintheorem}\label{thm:main}
For any circle graph $H$ with at least one edge and for any positive integer $k$, there exists an integer $t = t(k, H)$ so that every graph~$G$ either has a vertex-minor isomorphic to~$kH$, or has a $t$-perturbation $\widetilde{G}$ which has no vertex-minor isomorphic to~$H$.
\end{restatable}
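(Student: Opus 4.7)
The proof plan follows a rank-width dichotomy, in the spirit of the tree-width dichotomy behind the classical planar-minor Erd\H{o}s-P\'{o}sa theorem, but with vertex-minors and perturbations replacing minors and vertex deletions.

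\textbf{Phase 1 (Large rank-width $\Rightarrow$ many disjoint copies).} The first ingredient is a structural statement: there is a function $f = f(k,H)$ so that every graph of rank-width at least $f(k,H)$ has a vertex-minor isomorphic to $kH$. I would establish this by appealing to a grid-like theorem for vertex-minors, for example the result (of Geelen--Kwon--McCarty--Oum--Wollan type) that large rank-width forces a large universal vertex-minor such as a comparability grid or a large circle-graph vertex-minor. Since $kH$ is itself a circle graph (disjoint unions of circle graphs being circle graphs), it then suffices to show that any fixed circle graph occurs as a vertex-minor of a sufficiently large ``universal'' circle graph. This step exploits the tight link between circle graphs and vertex-minors: chord diagrams admit a natural notion of ``sub-diagrams'' under which vertex-minor containment reduces to a combinatorial problem about chord systems, from which one can extract $k$ pairwise disjoint realisations of $H$.

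\textbf{Phase 2 (Bounded rank-width $\Rightarrow$ small perturbation).} Assume $G$ has rank-width at most $w := f(k,H)$ and has no vertex-minor isomorphic to $kH$; the goal is to produce a $t$-perturbation $\widetilde G$ avoiding $H$. I would fix a rank-decomposition $(T,\mathcal L)$ of $G$ of width $w$ and proceed by induction on $k$. If some bag of the decomposition already exhibits a vertex-minor copy of $H$, I would use the bounded-rank interaction across the corresponding cut (which has rank at most $w$) to ``contract'' that copy into a bounded number of auxiliary vertices, charging $O(w)$ to the perturbation budget, and recurse on the remainder with parameter $k-1$. If no bag hosts a local copy of $H$, every copy of $H$ in $G$ must spread across the decomposition; but such ``spread'' copies can only reach across boundaries of cut-rank $\leq w$, and by a pigeonholing/Ramsey-style argument the number of essentially different ways they cross is bounded, so finitely many well-chosen bounded-rank perturbations suffice to destroy them all.

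\textbf{Bridging the two phases.} The full argument combines Phase~1 and Phase~2 as follows: if the rank-width of $G$ exceeds $f(k,H)$, Phase~1 gives the desired vertex-minor; otherwise Phase~2 gives the desired perturbation. The parameter $t(k,H)$ then arises as the perturbation budget produced in Phase~2 in terms of $k$, $|V(H)|$, and $w = f(k,H)$.

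\textbf{Main obstacle.} I expect Phase~2 to be the substantive difficulty. Unlike the planar minor case, where one deletes vertices to ``kill'' a copy of $H$, here one must simulate killing by a bounded-rank edit on the adjacency matrix realised through the vertex-minor operation on an augmented graph. The delicate point is that a vertex-minor copy of $H$ can live on a very spread-out vertex set, linked only by low-rank cut interactions, so one must argue that low-rank structure alone already constrains the copies of $H$ enough to be jointly ``perturbable'' at bounded cost. Making this precise -- and in particular, turning ``rank-width $\leq w$ together with no $kH$ vertex-minor'' into a concrete recipe for building $\widetilde G$ -- is the main technical investment of the proof.
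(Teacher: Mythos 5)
Your Phase~1 matches the paper: apply the Grid Theorem for Vertex-Minors (Geelen--Kwon--McCarty--Wollan) directly to $kH$ (which is a circle graph since circle graphs are closed under disjoint union), so if $G$ has no $kH$ vertex-minor it already has rank-width at most $r(kH)$, and there is no need for a separate ``universality'' argument about chord diagrams.

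Phase~2 is where the real work lives, and your sketch has a genuine gap. Your plan -- find a local copy of $H$ in a bag, contract it at cost $O(w)$, and recurse with $k-1$ -- does not obviously terminate or yield a sound induction: destroying one copy of $H$ does not mean the remainder has no $(k-1)H$, and if no bag hosts a local copy, the claim that ``finitely many well-chosen bounded-rank perturbations suffice'' is exactly the theorem you are trying to prove, not a reduction of it. The paper circumvents this by introducing \emph{robustness}: a graph is $t$-robust for $H$ if no $t$-perturbation of it can destroy $H$ as a vertex-minor. The correct dichotomy is then built on the Robertson--Seymour Erd\H{o}s--P\'osa lemma for subtrees of a tree applied to the rank-decomposition (\cref{lem:tree}): either some component $H_i$ admits a small vertex hitting set in the tree -- in which case each resulting piece is \emph{not} robust and can be individually perturbed, and \cref{lem:pertCutRank} lets you cut the pieces apart at bounded cost -- or there are many disjoint subtrees whose induced parts are all robust. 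To make the second branch usable, the paper then invokes the well-quasi-ordering of bounded-rank-width graphs under pivot-minors (\cref{thm:wqo}) to replace each robust part by a bounded-size robust vertex-minor while keeping the cut-rank under control (\cref{lem:vm-perturbation}, \cref{lem:robustCommute}), and finally a bespoke Ramsey-style ``chain cleaning'' (\cref{lem:cleanupchain}) to locally complement and thin the chain until the parts become pairwise non-adjacent, at which point each part -- still $0$-robust -- yields a disjoint copy of each $H_i$. Your proposal identifies the right dichotomy and the right black box for Phase~1, but it lacks the robustness concept, the Erd\H{o}s--P\'osa-for-subtrees step, and the WQO bound on part size, all of which are essential to bound the perturbation budget purely in terms of $k$ and $H$.
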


\noindent We note that if $H$ is a graph with no edges, then any graph $G$ with no vertex-minor isomorphic to~$kH$ has a bounded number of vertices in terms of $k$ and $\abs{V(H)}$. To see this, note that $kH$ is an edgeless graph with $k\abs{V(H)}$ vertices. By Ramsey's Theorem, every sufficiently large graph has an independent set of size $k\abs{V(H)}$ or a clique of size $k\abs{V(H)}+1$. Locally complementing at a vertex of the clique yields a star, which contains an independent set of size $k\abs{V(H)}$. So in both cases, we obtain $kH$ as a vertex-minor.

Perturbations are the correct analog of hitting sets because of the following rough converse of \cref{thm:main}, which holds with different quantitative bounds.

\begin{restatable}{proposition}{mainProp}\label{prop:converse}
    Let $H$ be a graph and $t$ be a non-negative integer.
    If a graph~$G$ has a $t$\nobreakdash-perturbation~$\widetilde{G}$ with no vertex-minor isomorphic to $H$, then $G$ has no vertex-minor isomorphic to~$(t+1)H$.
\end{restatable}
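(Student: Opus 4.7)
The plan is to argue by contradiction: suppose $G$ has a vertex-minor isomorphic to $(t+1)H$, and aim to show that $\widetilde G$ must then have $H$ as a vertex-minor, contradicting the hypothesis. Since $G$ is a vertex-minor of the witnessing graph $F$ on $\abs{V(G)}+t$ vertices, $F$ also has $(t+1)H$ as a vertex-minor by transitivity of the vertex-minor relation. The problem therefore reduces to the following lemma: if $F$ has $(t+1)H$ as a vertex-minor and $\widetilde G$ is a vertex-minor of $F$ with $\abs{V(F)}-\abs{V(\widetilde G)} = t$, then $\widetilde G$ has $H$ as a vertex-minor.

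To prove this lemma, I would first replace $F$ by a locally equivalent graph so that the $t+1$ copies of $H$ appear as pairwise disjoint, pairwise non-adjacent induced subgraphs on vertex subsets $V_1,\dots,V_{t+1} \subseteq V(F)$; this is possible because the copies in $(t+1)H$ are realised by a single common local-complementation sequence on the graph having $(t+1)H$ as a vertex-minor, and because the collection of vertex-minors of $F$ coincides with the collection of vertex-minors of any locally equivalent graph. Write $S := V(F) \setminus V(\widetilde G)$, so $\abs{S}=t$; by pigeonhole at least one $V_i$ can be chosen disjoint from $S$. I would then proceed by induction on $t$: the base case $t=0$ is immediate, since $\widetilde G$ is locally equivalent to $F$ and so contains $H$ as an induced subgraph on any such $V_i$.

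For the inductive step, I would peel off a single vertex $s\in S$, constructing an intermediate graph $F'$ on $V(F)\setminus\{s\}$ which (i) contains $t$ pairwise disjoint, pairwise non-adjacent induced copies of $H$ disjoint from $S\setminus\{s\}$, and (ii) has $\widetilde G$ as a vertex-minor; then the inductive hypothesis applied to $F'$ and $\widetilde G$ with parameter $t-1$ yields the conclusion. The main obstacle is constructing such an intermediate $F'$: a local complementation at $s$ complements all edges within $N_F(s)$ and can therefore simultaneously disturb the induced $H$-structure on every $V_i$ that contains at least two neighbours of $s$ in $F$. To ensure that at least $t$ of the $t+1$ copies survive, one must carefully select a locally equivalent representative of $F$---or equivalently, adjust the local-complementation sequence used to obtain $\widetilde G$ from $F$---so that $s$ has controlled adjacency to the $V_i$'s, for instance so that $s$ has at most one neighbour in all but at most one of them.
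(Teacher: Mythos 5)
Your high-level structure agrees with the paper's: reduce to a single witnessing graph, assume $(t+1)H$ appears as an induced subgraph by local complementation, and then induct on $t$ by peeling off one vertex of $S$ at a time. The base case and the setup are fine. However, there is a genuine gap at exactly the place you flag as the ``main obstacle.''

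Your proposed fix---choosing a locally equivalent representative of $F$ so that $s$ has at most one neighbour in all but one of the $V_i$'s---is neither proven nor, I believe, achievable in general. Local complementations inside $V_1\cup\dots\cup V_{t+1}$ would change whether those sets induce $H$, and local complementations elsewhere do not give obvious control over the bipartite adjacency between $s$ and the copies. So the inductive step is not actually established. Moreover, you do not invoke the key structural fact the paper uses (\cref{lem:bouchet3Ways}): for any $s$ outside $\widetilde G$, the graph $\widetilde G$ is a vertex-minor of at least one of $F-s$, $F*s-s$, and $F/s$. Without this you cannot even justify that an intermediate $F'$ on $V(F)\setminus\{s\}$ exists that still has $\widetilde G$ as a vertex-minor. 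The paper's actual resolution of the obstacle is a different and cleaner trick: rather than trying to \emph{prevent} $s$ from disturbing the copies, it \emph{sacrifices} exactly one copy per peeled vertex. If $s$ has a neighbour $u$ inside the $(t+1)H$, then in $F*s-s$ one locally complements at $u$ and deletes the whole copy containing $u$; because the $t+1$ copies are pairwise non-adjacent, $u$ has no edges to the other copies, so $N(u)$ and $N(s)$ agree on the remaining copies and the complementation at $u$ exactly undoes the damage of $*s$ there, leaving $tH$ intact (and similarly for the pivot case $F/s$, and trivially for $F-s$). That observation is what you are missing; once you have it, the induction closes. As a minor additional point, the pigeonhole remark that some $V_i$ avoids $S$ is not needed and does not by itself help, since the deletions of the other vertices of $S$ can still disturb that $V_i$.
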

\noindent Moreover, we prove in \cref{lem:pertNecessary} that non-trivial perturbations are necessary, that is, \cref{thm:main} does not always hold with~taking a $0$-perturbation (i.e., locally complementing) and removing $t$ vertices. 

We note that 
a graph $\widetilde{G}$ is a small perturbation of a graph $G$ if and only if we can obtain a graph that is locally equivalent to $\widetilde{G}$ by adding (over the binary field) a low-rank matrix to the adjacency matrix of $G$ and setting the diagonal to zero (see \cref{lem:rank,lem:rankConv}). This equivalent definition has been considered before in relation to vertex-minors; see Geelen's conjecture about the structure of graphs with a forbidden vertex-minor~\cite{McCartyThesis}. It is known that perturbations of classes with a forbidden vertex-minor also forbid a vertex-minor; see~\cite[Lemma~1.6.7]{McCartyThesis}. This operation of adding a low-rank matrix is also a form of ``atomic transduction'', which is an important tool for studying first-order properties of graph classes; see~\cite{logicSurvey} for a survey of recent trends in graphs and logic. In this area, the operation is usually called ``flipping''; see~\cite{flipperGames, flipWidth} for some of its many uses.



We do not know whether the condition that $H$ is a circle graph in \cref{thm:main} is necessary, although there are some reasons to guess that it might be. First of all, non-planar graphs $H$ do \emph{not} have the Erd\H{o}s-P\'{o}sa property for minors~\cite{graphMinors5}. Secondly, our proof of \cref{thm:main} relies on the Grid Theorem for Vertex-Minors~\cite{gridThmVM}. This theorem says that for any fixed circle graph $H$, every graph without $H$ as a vertex-minor has bounded rank-width. 
(We also observe that for any circle graph $H$ and positive integer $k$, the graph~$kH$ is a circle graph.) This theorem is the only place we require $H$ to be a circle graph; once we know that the host graph $G$ has bounded rank-width, we do not need any requirement on~$H$. However, for the Grid Theorem for Vertex-Minors, it \emph{is} necessary for $H$ to be a circle graph, since circle graphs themselves can have arbitrarily large rank-width. 

So it is natural to pose the following problem.

\begin{problem}
Does any non-circle graph have the Erd\H{o}s-P\'{o}sa property for vertex-minors? 
\end{problem}
\noindent That is, is there any graph $H$ which is not a circle graph but does satisfy \cref{thm:main} for every positive integer~$k$? We actually suspect that the answer is yes, but if so, this shows an interesting distinction between minors and vertex-minors.

Using the same techniques, we also prove an Erd\H{o}s-P\'{o}sa property for circle graphs as pivot-minors in bipartite graphs. This theorem (\cref{thm:mainPivot}) yields the following corollary.


\begin{restatable}{corollary}{binaryMatroids}\label{cor:binaryMatroids}
For any planar multigraph $H$ and for any positive integer~$k$, there exists an integer $p=p(k,H)$ so that each binary matroid $M$ either has a minor isomorphic to $M(kH)$, or has a rank-$p$ perturbation $\widetilde{M}$ which has no minor isomorphic to~$M(H)$.
\end{restatable}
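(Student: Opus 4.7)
The plan is to deduce this statement from the bipartite/pivot-minor version of the main theorem, \cref{thm:mainPivot}, using the classical dictionary between binary matroids and bipartite graphs via fundamental graphs. Fix a basis $B$ of $M$ and let $G = G(M,B)$ be the fundamental bipartite graph, whose bipartition is $(B, E(M) \setminus B)$ and whose bipartite adjacency matrix is the matrix $A$ in the standard representation $[I_B \mid A]$ of $M$ over $\mathbb{F}_2$. It is classical (see e.g.\ the work of Bouchet cited in the introduction) that the minors of $M$ correspond, up to basis exchanges, to the pivot-minors of $G$: contracting an element of $B$ and deleting an element of $E(M)\setminus B$ amount to vertex deletions in $G$, while the other two operations are handled by first pivoting.

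Since $H$ is a planar multigraph, the cycle matroid $M(H)$ has a fundamental graph $F_H$ that is a bipartite circle graph; this is exactly the bridge between planarity, graphic matroids, and circle graphs developed by Bouchet and de~Fraysseix and invoked in the introduction. The disjoint union $kH$ is still planar, and $M(kH)$ has fundamental graph $kF_H$, which is again a bipartite circle graph. I would then apply \cref{thm:mainPivot} to the host graph $G$ with target circle graph $F_H$. In the first case, $G$ contains $kF_H$ as a pivot-minor, and translating back through the fundamental graph correspondence produces $M(kH)$ as a minor of~$M$. In the second case, we get a $t$-perturbation $\widetilde{G}$ of $G$ with $t = t(k, F_H)$ that has no pivot-minor isomorphic to $F_H$; pulling this back through the same correspondence yields a binary matroid $\widetilde{M}$ on the same ground set as $M$ with no minor isomorphic to $M(H)$, and we set $p = p(k,H)$ to be the resulting bound on the rank of the perturbation.

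The main obstacle is to make the two halves of the dictionary precise, in particular to show that a $t$\nobreakdash-perturbation of a bipartite graph (in the graph sense defined via vertex-minors on $t$ extra vertices) corresponds to a matroid perturbation of rank bounded by some function of $t$ alone, independently of $|E(M)|$. The equivalent description of perturbations given by \cref{lem:rank,lem:rankConv}—namely, that $\widetilde{G}$ is a $t$\nobreakdash-perturbation of $G$ iff some graph locally equivalent to $\widetilde{G}$ is obtained from $G$ by adding an $\mathbb{F}_2$-matrix of rank $O(t)$ to its adjacency matrix (with zero diagonal)—is the right tool: restricted to bipartite graphs and read off the off-diagonal block, this is exactly the statement that the bipartite adjacency matrix of $\widetilde{G}$ differs from $A$ by a matrix of bounded $\mathbb{F}_2$-rank, which is the definition of a bounded-rank perturbation of the binary matroid. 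The only real calculation is therefore to track ranks through this translation and to confirm that pivoting (basis exchange), which may be needed to align $\widetilde{G}$ with a standard representation of $\widetilde{M}$, does not blow up the rank of the difference matrix.
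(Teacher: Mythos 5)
Your high-level plan is the right one and matches the paper's strategy at the top level: pass to the fundamental graph $\mathcal{F}(M(H),E(T))$, recognize it as a bipartite circle graph via de Fraysseix's theorem (\cref{thm:planarBipCircle}), apply \cref{thm:mainPivot}, and translate back. However, there are two genuine gaps.

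First, the degenerate cases. \cref{thm:mainPivot} requires the target bipartite graph to have at least one edge, and $\mathcal{F}(M(H),E(T))$ has no edge precisely when $H$ has no cycle of length at least two. The paper spends a full page dispatching these cases ($H$ edgeless, $H$ acyclic, $H$ with only loops, $H$ with only loops and bridges via the bipartite Ramsey theorem) before it can invoke \cref{thm:mainPivot}; your proposal does not address them, and they do not follow for free.

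Second, and more seriously, the translation from a $p_1$-pivot-perturbation of a bipartite graph back to a rank-$p$ perturbation of the binary matroid is not a routine rank-chasing computation, and the tool you reach for is the wrong one. Lemmas~\ref{lem:rank} and~\ref{lem:rankConv} concern $t$-perturbations (the vertex-minor notion), not $t$-pivot-perturbations, and \cref{lem:rankConv} produces a graph only \emph{locally equivalent} to the given one; local complementation destroys bipartiteness, so ``reading off the off-diagonal block'' is not available after you apply it. Even setting that aside, a pivot-perturbation can move vertices between the two sides of the bipartition, so $\widetilde{\mathcal{G}}$ need not be the fundamental graph of $\widetilde{M}$ with respect to the \emph{same} basis $B$; aligning bases by further pivoting can change the difference of bipartite adjacency matrices to something of unbounded rank, which is exactly the calculation you acknowledge you have not done and which does not in fact go through. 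The paper avoids this entirely by introducing \cref{lem:pivotElementary} (a $p$-pivot-perturbation of fundamental graphs forces $\dist(M_1,M_2)\le p$ in the elementary lift/projection metric) and then quoting \cref{lem:liftsProjections} (from Geelen--Gerards--Whittle) to convert bounded lift/projection distance into a bounded-rank perturbation. That lift/projection framework is the matroidal device that makes the translation basis-independent, and some replacement for it is needed in your argument.
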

We write $M(G)$ for the cycle matroid of a graph $G$. A binary matroid $\widetilde{M}$ is a \emph{rank-$p$ perturbation} of a binary matroid $M$ if $E(M)=E(\widetilde{M})$ and they have respective representations $\widetilde{A}$ and $A$ over the binary field so that $\widetilde{A}$ and $A$ have the same set of row indices and $\widetilde{A}-A$ has rank at most~$p$; see \cref{sec:pivots} for details. We also prove a rough converse of \cref{cor:binaryMatroids}; see \cref{prop:binMatConverse}. A generalization of \cref{cor:binaryMatroids} to $\GF(q)$-representable matroids will be proven by James Davies and the third author in an upcoming paper. For other notions of containment, Erd\H{o}s-P\'{o}sa properties have been considered for matroids; see~\cite{disjCocircuits} and its strengthening~\cite{betterDisjCocircuits}. These papers consider matroids without $k$ disjoint cocircuits. 

Rank-$p$ perturbations are a fundamental notion when studying the structure of $\GF(q)$-representable matroids~\cite{highlyConnMatroidsConj}. However, \cref{cor:binaryMatroids} does not directly compare to Robertson and Seymour's theorem that planar graphs have the Erd\H{o}s-P\'{o}sa property. There are several reasons for this discrepancy, including the fact that $kH$ is not the only graph whose cycle matroid is~$M(kH)$. Additionally, low-rank perturbations of matroids are more analogous to splitting/identifying vertices a few times, rather than to deleting a few vertices.

This paper is organized as follows. 
In \cref{sec:prelims}, we review some basic definitions about vertex-minors and rank-width and state some useful lemmas about trees.
In \cref{sec:perturbations}, we discuss perturbations and a key new idea called \emph{robustness}.
In \cref{sec:RobustParts}, we prove \cref{prop:chain}, which says that either we can find the desired perturbation, or we can find many disjoint robust parts of bounded size and very small cut-rank. 
In \cref{sec:extract}, we show how to work with the robust parts that we found in \cref{prop:chain} in order to obtain~$kH$ as a vertex-minor (see \cref{prop:cleaningChains}). 
In \cref{sec:conclusion}, we prove \cref{thm:main} and its rough converse, \cref{prop:converse}.
In \cref{sec:discussion}, we discuss properties of perturbations and prove that perturbations are necessary for \cref{thm:main} (see \cref{lem:pertNecessary}). 
In \cref{sec:pivotminor}, we discuss the relationship between pivot-minors of bipartite graphs and minors of binary matroids.
In \cref{sec:pivots}, we prove our results about pivot-minors (\cref{thm:mainPivot} and its rough converse, \cref{prop:conversePivot}). 
Finally, in \cref{sec:matroids}, we prove our results about binary matroids (\cref{cor:binaryMatroids} and its rough converse, \cref{prop:binMatConverse}).


\section{Preliminaries}
\label{sec:prelims}

In this section, we introduce some preliminaries that are used throughout the paper.
Graphs are finite and simple unless otherwise noted.
\emph{Multigraphs} are allowed to contain both loops and parallel edges. Circle graphs are the intersection graphs of chords of a circle. In other words, a graph is a \emph{circle graph} if there is a set of chords of a circle such that the vertices of the graph correspond to the chords and two vertices are adjacent if and only if the corresponding chords intersect. For a non-negative integer $k$, we let $[k]$ denote the set of all positive integers less than or equal to~$k$.

Let $G$ be a graph. We denote by $V(G)$ and $E(G)$ the vertex set and edge set of~$G$, respectively. 
For a vertex~$v$ in~$G$, we denote by $G-v$ the graph obtained from $G$ by deleting the vertex $v$ and all edges incident with~$v$. 
For a set~$S$ of vertices in $G$, we denote by $G-S$ the graph obtained from $G$ by deleting all vertices in $S$ and all edges incident with $S$. 
For an edge $e$ in $G$, we denote by $G-e$ the graph obtained from $G$ by deleting $e$. 
For a subset~$F$ of $E(G)$, we denote by $G-F$ the graph obtained from $G$ by deleting all edges in $F$. For $S\subseteq V(G)$, we let $G[S]$ denote the subgraph of~$G$ induced by $S$. For a vertex $v$ of~$G$, we let $N_G(v)$ denote the set of neighbors of $v$ in $G$. A vertex is not considered to be in its own neighborhood. We omit the subscript if $G$ is clear from context.


\subsection*{Vertex-minors and pivot-minors}

\begin{figure}
    \centering
    \begin{subfigure}{0.2\textwidth}
        \centering
        \begin{tikzpicture}
        \tikzstyle{v}=[circle, draw, solid, fill=black, inner sep=0pt, minimum width=3pt]
        \node [v,label=$v$] (v) at (-.5,.5) {};
        \node [v,label=$w$] (w) at (.5,.5){};
        \draw (v) to (w);
        \foreach \i in {1,2,3,4,5,6,7}{
            \node [v] (v\i) at (180-30+\i*30:1) {};
        }
        \draw (v)--(v1);
        \draw (v)--(v2);
        \draw (v)--(v3);
        \draw (v)--(v4);
        \draw (w)--(v3);
        \draw (w)--(v4);
        \draw (w)--(v5);
        \draw (w)--(v6);
        \draw (v1)--(v3);
        \draw (v2)--(v4); 
        \draw (v3)--(v4);
        \draw (v4)--(v6);
        \draw (v5)--(v7);
        \draw (v1)--(v7);
        \draw (v2)--(v7);
    \end{tikzpicture}
    \caption{$G$}
    \end{subfigure}
    \begin{subfigure}{0.2\textwidth}
        \centering
        \begin{tikzpicture}
        \tikzstyle{v}=[circle, draw, solid, fill=black, inner sep=0pt, minimum width=3pt]
        \tikzstyle{r}=[red]
        \node [v,label=$v$] (v) at (-.5,.5) {};
        \node [v,label=$w$] (w) at (.5,.5){};
        \draw (v) to (w);
        \foreach \i in {1,2,3,4,5,6,7}{
            \node [v] (v\i) at (180-30+\i*30:1) {};
        }
        \draw (v)--(v1);
        \draw (v)--(v2);
        \draw (v)--(v3);
        \draw (v)--(v4);
        \draw (w)--(v5);
        \draw (w)--(v6);
        \draw [r] (v1)--(v4);
        \draw [r] (v1)--(v2);
        \draw [r] (v2)--(v3);
        \draw [r] (w)--(v1);
        \draw [r] (w)--(v2);
        \draw (v4)--(v6);
        \draw (v5)--(v7);
        \draw (v1)--(v7);
        \draw (v2)--(v7);
    \end{tikzpicture}
    \caption{$G*v$}
    \end{subfigure}
    \begin{subfigure}{0.2\textwidth}
        \centering
        \begin{tikzpicture}
        \tikzstyle{v}=[circle, draw, solid, fill=black, inner sep=0pt, minimum width=3pt]
        \tikzstyle{r}=[red]
        \tikzstyle{b}=[blue]
        \node [v,label=$w$] (v) at (-.5,.5) {};
        \node [v,label=$v$] (w) at (.5,.5){};
        \draw (v) to (w);
        \foreach \i in {1,2,3,4,5,6,7}{
            \node [v] (v\i) at (180-30+\i*30:1) {};
        }
        \draw (v)--(v1);
        \draw (v)--(v2);
        \draw (v)--(v3);
        \draw (v)--(v4);
        \draw (w)--(v3);
        \draw (w)--(v4);
        \draw (w)--(v5);
        \draw (w)--(v6);
        \draw [r] (v1)--(v4);
        \draw [r] (v2)--(v3);
        \draw [r] (v3)--(v5);
        \draw [r] (v3)--(v6);
        \draw [r] (v4)--(v5);
        \draw [r] (v1)--(v5);
        \draw [r] (v1)--(v6);
        \draw [r] (v2)--(v5);
        \draw [r] (v2)--(v6);
        \draw (v3)--(v4);
        \draw (v5)--(v7);
        \draw (v1)--(v7);
        \draw (v2)--(v7);
    \end{tikzpicture}
    \caption{$G\pivot vw$}
    \end{subfigure}
    \caption{Local complementation and pivoting.}
    \label{fig:lc}
\end{figure}
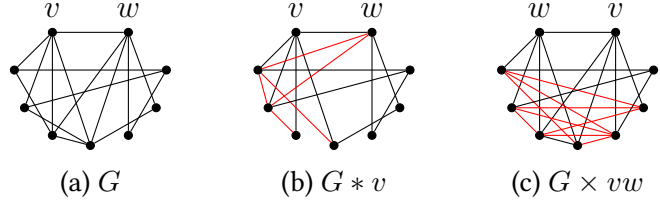

Let $G$ be a graph. For a vertex $v$ of $G$, \emph{locally complementing} at~$v$ removes all existing edges between all neighbors of~$v$ and adds edges between all non-adjacent pairs of neighbors of~$v$ in~$G$.
The resulting graph is denoted by $G*v$. 
For an edge $uv$ of~$G$, \emph{pivoting} at $uv$ firstly removes all existing edges between every pair of vertices in $V(G)\setminus\{u,v\}$ having distinct sets of neighbors in $\{u,v\}$, 
secondly adds edges between all non-adjacent pairs of vertices in $V(G)\setminus\{u,v\}$ having distinct sets of neighbors in $\{u,v\}$, and finally 
swaps the labels of~$u$ and $v$. 
The resulting graph is denoted by~$G\pivot uv$.
See \cref{fig:lc} for an illustration.
It is well known that $G \pivot uv = G*u*v*u = G*v*u*v$; see~\cite{RWAndVM}.

We say that two graphs are \emph{locally equivalent} (respectively, \emph{pivot-equivalent}) if one can be obtained from the other by a sequence of local complementations (respectively, pivots).
We say that a graph $H$ is a \emph{vertex-minor} (respectively, \emph{pivot-minor}) of a graph~$G$ if $H$ is an induced subgraph of a graph that is locally equivalent (respectively, pivot-equivalent) to~$G$.
Equivalently, $H$ is a vertex-minor of~$G$ if $H$ can be obtained from $G$ by any sequence of local complementations and vertex deletions, performed in any order. Likewise, $H$ is a pivot-minor of~$G$ if $H$ can be obtained from $G$ by any sequence of pivots and vertex deletions, performed in any order.

We require the following well-known lemma, which shows that there are only three ways to remove a vertex up to local equivalence. To explain this lemma properly, we need to introduce some notation. Given a graph $G$ and a vertex $v$ of $G$, we write $G/v$ for the graph that is obtained from $G$ by deleting $v$ if $v$ is isolated, and otherwise by selecting an arbitrary neighbor $u$ of $v$, pivoting on $uv$, and deleting $v$. This graph is well-defined up to pivot-equivalence because, for distinct neighbors $u_1$ and $u_2$ of $v$, we have $G \times vu_1 = (G \times vu_2 ) \times u_1u_2$; see~\cite[Proposition~2.5]{RWAndVM}.

A direct proof of the first part of the lemma can be found in~\cite[Lemma~1.4]{circlePivoting}, while the second part of the lemma follows from~\cite[Lemma~3.1]{circlePivoting}.

\begin{lemma}[{Bouchet~\cite[(9.2)]{graphicIsoSystems}} and Fon-Der-Flaass~{\cite[Corollary 4.3]{FonDerFlaass1988}}]\label{lem:bouchet3Ways}
    If $H$ is a vertex-minor of a graph~$G$ and $v$ is a vertex of~$G$ not in~$H$, then $H$ is a vertex-minor of at least one of $G-v$, $G*v-v$, and $G/v$. Moreover, the number of these three graphs which contain~$H$ as a vertex-minor is invariant under local complementation in $G$.
\end{lemma}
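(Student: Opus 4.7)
The plan is to prove both parts by induction on the length of a witnessing sequence of local complementations and deletions taking $G$ to $H$, combined with a small library of commutation identities among local complementation, pivoting, and deletion of $v$. The identities I would collect are: (a) for $u \neq v$, deletion of $v$ commutes with $*u$, i.e.\ $(G*u)-v = (G-v)*u$; (b) for $u \neq v$ with $uv \notin E(G)$, the complementations commute, $G*u*v = G*v*u$; and (c) for $uv \in E(G)$, the pivot identity $G*u*v*u = G\pivot uv$. Combining (a)--(c) shows that $(G*u)*v - v$ is locally equivalent to $G*v - v$ when $uv \notin E(G)$ and to $G/v$ when $uv \in E(G)$; a parallel unfolding of $/v$ as a pivot on an edge $wv$ followed by deletion of $v$ shows that $(G*u)/v$ is locally equivalent to $G/v$ when $uv \notin E(G)$ and to $G*v - v$ when $uv \in E(G)$.

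For the first part, I would induct on the length $n$ of the witnessing sequence. In the base case $n = 1$, the single operation must delete $v$ (since $v \notin V(H)$), so $H = G - v$. For the inductive step, I case-split on the first operation. If it is a local complementation $*u$, the inductive hypothesis applied to $G*u$ yields $H$ as a vertex-minor of one of $(G*u)-v$, $(G*u)*v-v$, $(G*u)/v$, and the identities above show each of these is locally equivalent to one of the three target graphs, so $H$ is a vertex-minor of one of $G-v$, $G*v-v$, $G/v$. If the first operation is deletion of some $u \neq v$, then by the inductive hypothesis applied to $G-u$, the graph $H$ is a vertex-minor of one of $(G-u)-v$, $(G-u)*v-v$, $(G-u)/v$, each of which is a vertex-minor of the corresponding target (with $(G-u)/v$ requiring a subcase on whether $v$ has any neighbors in $G-u$: if not, $(G-u)/v = (G-u)-v$ is a vertex-minor of $G-v$; if so, one checks that pivoting on a common neighbor $w$ commutes with deleting $u$, so $(G-u)/v = (G/v) - u$).

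For the moreover part, the same identities show that local complementation at any vertex $u$ of $G$ permutes the multiset $\{[G-v],\, [G*v-v],\, [G/v]\}$ of local-equivalence classes: the class $[G-v]$ is always fixed by (a), while $[G*v-v]$ and $[G/v]$ are either both fixed (when $u = v$ or $uv \notin E(G)$) or swapped (when $uv \in E(G)$). Since containing $H$ as a vertex-minor depends only on the local-equivalence class of a graph, the number of the three graphs containing $H$ as a vertex-minor depends only on this multiset and is therefore invariant under local complementation of $G$.

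The main obstacle is verifying the identities for $(G*u)/v$, since $/v$ is only defined up to pivot-equivalence (requiring a choice of neighbor of $v$). I would handle this by fixing $w \in N_G(v)$, using the cited well-definedness of $G/v$ up to pivot-equivalence to write $G/v$ as (locally equivalent to) $G*w*v*w - v$, and then commuting $*u$ past the sequence $*w*v*w$ via (a)--(c). The subcase where $u$ is adjacent to $w$ in $G$ is the trickiest, requiring a second application of the pivot identity (to the edge $uw$) before the final commutation goes through; here one may need to change the choice of pivot-neighbor, which is permitted by the well-definedness.
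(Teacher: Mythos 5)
The paper does not prove this lemma — it cites Bouchet and Fon-Der-Flaass and points to \cite[Lemma~1.4 and Lemma~3.1]{circlePivoting} for direct proofs — so there is no in-paper proof to compare against. Your strategy (induct on the length of a witnessing sequence of local complementations and deletions, pushing a single $*u$ or deletion past the ``removal of $v$'' operations using commutation identities) is a legitimate direct approach and is in the same spirit as the referenced proofs. Your identities (a)--(c) are all correct, and I verified that the claimed conclusions of combining them do hold: $(G*u)*v-v$ is locally equivalent to $G*v-v$ when $uv\notin E(G)$ and to $G/v$ when $uv\in E(G)$; and $(G*u)/v$ is locally equivalent to $G/v$ when $uv\notin E(G)$ and to $G*v-v$ when $uv\in E(G)$. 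The base case and the deletion case of the induction are also handled correctly.

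There are two gaps worth flagging. First, in the ``moreover'' part you assert that $[G-v]$ is always fixed, but that uses identity (a), which requires $u\neq v$. When $u=v$ one instead has $(G*v)-v = G*v-v$, $(G*v)*v-v = G-v$, and $(G*v)/v$ locally equivalent to $G/v$, so the first two classes swap. The multiset $\{[G-v],[G*v-v],[G/v]\}$ is still preserved, so your conclusion survives, but the case analysis as written is wrong for $u=v$. Second, the ``trickiest subcase'' ($uv\notin E(G)$, the chosen neighbor $w$ of $v$ satisfies $uw\in E(G)$) is only sketched. Your proposed remedy — ``change the choice of pivot-neighbor'' — does not always apply, since it is possible that every neighbor of $v$ is also a neighbor of $u$. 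The subcase does close, but the clean identity is $(G*u)/v = ((G/v)*u)*w$ (whereas for $uw\notin E(G)$ one has $(G*u)/v = (G/v)*u$); this can be verified directly from the adjacency-matrix formula for pivoting. In particular, note that in $G/v$ the vertices $u$ and $w$ are never adjacent when $uv\notin E(G)$, so the extra $*w$ only toggles edges inside $X=V(G)\setminus\{u,v,w\}$ and the two local complementations commute. With these two fixes, your argument is complete.
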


\subsection*{Cut-rank and rank-width}
Let $G$ be a graph, and let $A$ be the adjacency matrix of~$G$. That is, $A$ is the $V(G) \pivot V(G)$ matrix whose $(u,v)$-entry is one if $uv \in E(G)$ and zero otherwise. The \emph{cut-rank} of a set~$X\subseteq V(G)$, which is denoted by $\rho_G(X)$ (or just $\rho(X)$ if the graph is clear), is the rank over the binary field of the submatrix of $A$ with rows $X$ and columns $V(G)\setminus X$. As a function on subsets of $V(G)$, cut-rank is symmetric and submodular~\cite{RWAndVM}. Moreover, the cut-rank function is invariant under local complementation~\cite[Proposition~2.6]{RWAndVM}.

We next define rank-width, which was introduced by Oum and Seymour~\cite{approxCWBW}. A \emph{rank-decomposition} of a graph $G$ is a tree $T$ so that every vertex of~$G$ is a leaf of $T$, and every vertex of $T$ has degree at most three. The \emph{width} of an edge $e$ of $T$ is the cut-rank in $G$ of the set of all leaves of one of the components of $T-e$. The \emph{width} of $T$ is the maximum width of an edge of $T$. Finally, the \emph{rank-width} of~$G$ is the minimum, over all rank-decompositions~$T$ of~$G$, of the width of $T$. 
Note that if $H$ is a vertex-minor of~$G$, then the rank-width of $H$ is at most the rank-width of~$G$~\cite{RWAndVM}.

We require the following Grid Theorem for Vertex-Minors.

\begin{theorem}[Geelen, Kwon, McCarty, and Wollan~\cite{gridThmVM}]
\label{thm:grid}
For any circle graph $H$, there exists an integer $r(H)$ so that every graph with no vertex-minor isomorphic to $H$ has rank-width at most $r(H)$.
\end{theorem}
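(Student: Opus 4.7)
The plan is to prove the contrapositive: graphs of sufficiently large rank-width contain every fixed circle graph $H$ as a vertex-minor. The strategy has two conceptual stages---first exhibit a single universal target, then extract it from any rank-width hypothesis.

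First I would establish a \emph{universal family}: for each positive integer $n$, construct a specific circle graph $U_n$ with the property that every circle graph on at most $n$ vertices is a vertex-minor of $U_n$. A natural candidate is the circle graph arising from a large, regular chord pattern (for example, the graph of a chord diagram consisting of many interleaved copies of a basic ``full-crossing'' block). Universality can be checked at the level of chord diagrams: any target diagram on at most $n$ chords can be located inside the pattern defining $U_n$ after pruning unused chords and using pivots to realize the required adjacencies. This reduces the theorem to the single universal statement ``large rank-width implies $U_n$ as a vertex-minor.''

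Second I would prove that statement by an iterative refinement argument. Start from a rank-decomposition witnessing high rank-width; by the submodularity and local-complementation invariance of the cut-rank function, extract a ``highly connected'' substructure in the sense of a tangle of high order in this decomposition. Then recursively obtain ever more organized subpatterns by performing local complementations and pivots guided by the tangle: at each step one uses the high cut-rank across some separation to realize a specified additional adjacency pattern, while keeping the remaining tangle nearly as strong. After depth corresponding to the size of $U_n$, one has assembled the desired vertex-minor. The rank-width threshold $r(H)$ is then the composition of the per-step costs.

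Combining the two stages, when $G$ has rank-width at least $r(H) \coloneqq r_0(\abs{V(H)})$ for a suitable function $r_0$, then $G$ contains $U_{\abs{V(H)}}$ and hence $H$ as a vertex-minor. The main obstacle is the second stage: unlike the graph minor setting, where topological structure can be read off a tangle almost directly, in the vertex-minor setting one must manipulate adjacencies via local complementations, which are $\mathbb{F}_2$-linear on the adjacency matrix but combinatorially nonlocal. Controlling how these operations interact across the rank-decomposition---so that each refinement realizes the intended adjacency without destroying the cut-rank budget needed for the remaining steps---is the core difficulty, and it is here that I expect essentially all of the technical work to live.
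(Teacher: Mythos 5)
This theorem is cited, not proven, in the present paper: it is the Grid Theorem for Vertex-Minors of Geelen, Kwon, McCarty, and Wollan~\cite{gridThmVM}, which the present authors invoke as a black box. There is therefore no in-paper proof to compare your proposal against, but a few comments on the proposal itself are still in order.

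Your two-stage outline---first reduce to a single universal circle-graph target, then show that large rank-width forces that target as a vertex-minor---does mirror the architecture of the cited proof. The universal object there is the $n \times n$ comparability grid, a dense circle graph on $n^2$ vertices that contains every sufficiently small circle graph as a vertex-minor, so your stage one is essentially correct modulo pinning down the construction.

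The problem is stage two, and it is a very large one. The passage from ``extract a tangle of high order in the rank-decomposition, then iteratively refine via local complementations guided by the tangle'' to an actual argument is essentially the entire content of~\cite{gridThmVM}, a long and technically demanding paper. Your sentence about using ``the high cut-rank across some separation to realize a specified additional adjacency pattern, while keeping the remaining tangle nearly as strong'' is a statement of what one would like, not a mechanism for achieving it. Local complementation acts globally on the adjacency matrix, and cut-rank, unlike edge-connectivity, does not hand you a directly navigable structure analogous to the wall one builds from a tangle in the graph-minors Grid Theorem; one has to construct such a framework from scratch, and that construction is where the cited proof's difficulty lies. Your proposal correctly locates the obstacle, which is good self-assessment, but it does not contain an idea for overcoming it; as a proof it is missing its core.
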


\noindent This theorem lets us reduce \cref{thm:main} to the case that $G$ has bounded rank-width, because the disjoint union of circle graphs is also a circle graph.

We also require the following theorem of Oum~\cite{RWandWQO}, which says that graphs of bounded rank-width are well-quasi-ordered under pivot-minors (and therefore also under vertex-minors).

\begin{theorem}[Oum~\cite{RWandWQO}]
\label{thm:wqo}
For any integer $r$ and any infinite set $\mathcal{C}$ of graphs of rank-width at most~$r$, there exist distinct graphs $G_1, G_2 \in \mathcal{C}$ so that $G_1$ is isomorphic to a pivot-minor of~$G_2$.
\end{theorem}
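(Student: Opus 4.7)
The plan is to encode each graph $G\in\mathcal{C}$ by a rank-decomposition whose nodes carry labels from a finite set, apply a Kruskal-type well-quasi-ordering theorem for such labeled trees, and then translate the resulting tree embedding into a pivot-minor embedding of graphs.

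For each $G\in\mathcal{C}$ I would first fix a rank-decomposition $T_G$ of width at most~$r$; after subdividing we may assume every internal node has degree exactly three, and we root $T_G$ at an arbitrary edge. Each edge~$e$ of $T_G$ separates $V(G)$ into parts $X_e$ and $V(G)\setminus X_e$, and the submatrix $A_e$ of the adjacency matrix of $G$ indexed by $X_e \times (V(G)\setminus X_e)$ has rank at most~$r$. A key observation is that pivoting at an edge whose endpoints straddle the cut performs an elementary row operation on~$A_e$, and the symmetric statement holds for columns; hence, up to pivot-equivalence we may freely choose ordered bases of the row space and column space of~$A_e$, encoding the cut by a matrix in $\GF(2)^{r'\times r'}$ for some $r'\leq r$. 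Each internal node of $T_G$ is then labeled by the three cut matrices coming from its incident edges, together with gluing data that records how the chosen bases on one edge are expressed in terms of the bases on the other two. Since these labels lie in a finite set depending only on~$r$, a Kruskal-type theorem for finitely labeled trees yields indices $i<j$ such that $T_{G_i}$ embeds into $T_{G_j}$ as a label-preserving topological minor.

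The principal obstacle is converting this tree embedding into an honest pivot-minor of $G_i$ inside~$G_j$. The embedding identifies, for each leaf of~$T_{G_i}$, a specific leaf of $T_{G_j}$; all other leaves of $T_{G_j}$ are deleted as vertices of~$G_j$. It then remains to contract each path of $T_{G_j}$ that represents a single edge of~$T_{G_i}$ down to an edge, by means of pivots and deletions performed inside the subtrees hanging off the path. The compatibility of the labels along the path is exactly what guarantees that an appropriate sequence of pivots in those subtrees realizes the adjacency pattern prescribed by the corresponding cut-matrix in~$T_{G_i}$. The technical heart of the argument is sequencing these pivots so that they do not disturb portions of the decomposition already matched to~$T_{G_i}$; the finiteness of the label set is what allows this gluing procedure to be carried out uniformly across the entire sequence~$\mathcal{C}$, yielding $G_i$ as a pivot-minor of $G_j$ as required.
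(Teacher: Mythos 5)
Your outline captures the general shape of arguments in the Robertson--Seymour lineage (encode the structure by a labeled tree, apply a Kruskal/Higman-type WQO theorem, convert a tree embedding into a containment of the encoded objects), but as written it has a genuine gap that is exactly the point at which such proofs become difficult. Matching node labels along a path in $T_{G_j}$ does \emph{not} guarantee that the path can be contracted down, by pivots and deletions performed in the subtrees hanging off it, to realize the corresponding edge of $T_{G_i}$. The obstruction is that the cuts in the middle of the path may be ``pinched'' relative to the cuts at its ends, so that no choice of pivots inside the interior subtrees can make the boundary data at the two ends agree. This is precisely why Robertson and Seymour introduced \emph{linked} tree-decompositions (and proved a Tutte-style linking theorem) before applying the tree WQO step; Geelen, Gerards, and Whittle did the same for matroids of bounded branch-width, and Oum's cited proof inherits this machinery. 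You do not mention linkedness anywhere, and without it the sentence ``the compatibility of the labels along the path is exactly what guarantees that an appropriate sequence of pivots realizes the adjacency pattern'' is an assertion, not an argument. A second issue is the claim that pivoting at an edge straddling the cut performs an elementary row operation on the cut-matrix $A_e$; pivoting is a principal-pivot/partial-inverse operation on the whole adjacency matrix and its effect on $A_e$ is not an elementary row operation, so ``up to pivot-equivalence we may freely choose ordered bases of the row and column spaces'' needs a real proof and is not simply bookkeeping.

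It is also worth flagging that Oum's actual proof takes a somewhat different route from the one you sketch: rather than labeling nodes of a rank-decomposition directly, he passes to Bouchet's isotropic systems (equivalently, Lagrangian chain groups over $\GF(4)$), which simultaneously generalize graphs up to local/pivot equivalence and binary matroids, and then proves a WQO theorem for isotropic systems of bounded branch-width by adapting the Geelen--Gerards--Whittle framework, including linked branch-decompositions and a careful analysis of the finite ``boundary'' types. The tree WQO step there is not bare Kruskal on finitely node-labeled trees but the specialized Robertson--Seymour ``lemma on trees'' for WQO proofs, applied to this boundary data. So even after fixing the linkedness gap, your argument would amount to re-deriving a substantial part of that machinery rather than giving a shorter alternative.
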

\noindent Oum conjectures that pivot-minors yield a well-quasi-ordering in general, that is, that we do not need the assumption that $\mathcal{C}$ has bounded rank-width in \cref{thm:wqo}; see~\cite{RWSurvey} and~\cite{McCartyThesis}. This conjecture would imply the famous theorem of Robertson and Seymour~\cite{WQOMinors} that graphs are well-quasi-ordered under minors.

\subsection*{Useful results on trees}

We complete this section by stating some lemmas on trees that we need in order to deal with the rank-decomposition tree. 

The first lemma is a type of Erd\H{o}s-P\'{o}sa property for subtrees of a fixed tree.

\begin{lemma}[Robertson and Seymour~{\cite[(8.6), (8.7)]{graphMinors5}}]
    \label{lem:tree}
    Let $T$ be a tree, let $k$ be a positive integer, and let $\mathcal{A}_1$, $\mathcal{A}_2$, $\ldots$, $\mathcal{A}_m$ be families of subtrees of $T$.
    Then at least one of the following holds. 
    \begin{enumerate}
        \item There exist subfamilies $\mathcal{B}_1 \subseteq \mathcal{A}_1$,
        $\mathcal{B}_2 \subseteq\mathcal{A}_2$, $\ldots$, $\mathcal{B}_m \subseteq\mathcal{A}_m$, each of size $k$,
        such that the subtrees in $\bigcup_{i\in[m]} \mathcal B_i$ are pairwise vertex-disjoint.
        \item There exist $i \in [m]$ and a set $X$ of less than $mk$ vertices of $T$ such that $T-X$ contains no subtree which is in $\mathcal A_i$.
    \end{enumerate}
\end{lemma}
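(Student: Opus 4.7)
The plan is to process the subtrees greedily in an order determined by rooting the tree. First I would root $T$ at an arbitrary vertex $r$, and for each subtree $S$ of $T$ define $\mathrm{top}(S)$ to be the vertex of $S$ closest to $r$ and $d(S)$ to be the distance from $r$ to $\mathrm{top}(S)$. Then I would initialize $\mathcal{B}_i = \emptyset$ for each $i \in [m]$ and sweep through all subtrees of $\bigcup_{i \in [m]} \mathcal{A}_i$ in order of decreasing $d$, breaking ties arbitrarily. When $S \in \mathcal{A}_i$ is processed, I would add $S$ to $\mathcal{B}_i$ exactly when $|\mathcal{B}_i| < k$ and $S$ is vertex-disjoint from every subtree already selected.

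If the sweep ends with $|\mathcal{B}_i| = k$ for every $i$, then outcome~(1) holds. Otherwise I would pick some $i^* \in [m]$ with $|\mathcal{B}_{i^*}| < k$ and let $X = \{\mathrm{top}(T') : T' \in \bigcup_{j \in [m]} \mathcal{B}_j\}$. Since $|\mathcal{B}_j| \leq k$ for each $j$ and $|\mathcal{B}_{i^*}| \leq k - 1$, this yields $|X| \leq (m-1)k + (k-1) = mk - 1 < mk$, so the remainder of the argument consists of showing that $X$ hits every subtree in $\mathcal{A}_{i^*}$.

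The main piece of work is this hitting-set verification, which splits into two cases. If $S \in \mathcal{B}_{i^*}$, then $\mathrm{top}(S)$ lies in $X \cap V(S)$. Otherwise $S$ was rejected by the algorithm; since $|\mathcal{B}_{i^*}|$ only grows and never reaches $k$, the rejection must have occurred because $S$ intersects some $T' \in \bigcup_{j \in [m]} \mathcal{B}_j$ already present at the time. Because $T'$ was processed before $S$, we have $d(T') \geq d(S)$, and the key geometric claim is the following: if two subtrees of a rooted tree share a vertex $w$ and $d(T') \geq d(S)$, then $\mathrm{top}(T') \in V(S)$. This follows because both tops are ancestors of $w$ on the unique root-to-$w$ path, the depth inequality places $\mathrm{top}(T')$ on the subpath from $\mathrm{top}(S)$ to $w$, and that subpath lies in $S$ by connectivity of $S$. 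Hence $\mathrm{top}(T') \in X \cap V(S)$.

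The only real obstacle is choosing the correct sweep order: processing by decreasing $d$ is precisely what allows each rejected subtree to be charged to the top vertex of an earlier-selected one, producing one hitting vertex per selected subtree and yielding the tight bound $mk - 1$ on $|X|$ without any induction on $m$ or $k$.
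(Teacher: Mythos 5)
The paper does not supply a proof of this lemma; it is cited as a result of Robertson and Seymour (Graph Minors V, (8.6)--(8.7)), so there is no internal proof to compare against. That said, your argument is correct and self-contained, and I checked it carefully.

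The greedy sweep in order of decreasing $d(\cdot)$ is sound. The key geometric claim holds: if subtrees $S$ and $T'$ share a vertex $w$, then since a subtree of a rooted tree has a unique shallowest vertex which is an ancestor of all its other vertices, both $\mathrm{top}(S)$ and $\mathrm{top}(T')$ lie on the root-to-$w$ path; the inequality $d(T') \geq d(S)$ places $\mathrm{top}(T')$ on the subpath from $\mathrm{top}(S)$ to $w$, which lies in $S$ by connectivity. The bound $|X| \leq (m-1)k + (k-1) = mk-1$ follows because selected subtrees are pairwise disjoint and hence have distinct tops, so $|X| = \sum_j |\mathcal{B}_j|$. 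The hitting-set verification correctly splits into the accepted and rejected cases, and the observation that $|\mathcal{B}_{i^*}|$ never reaches $k$ rules out rejection-by-fullness for $\mathcal{A}_{i^*}$.

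One minor point of hygiene: if a subtree $S$ belongs to more than one family $\mathcal{A}_i$, your phrase ``when $S \in \mathcal{A}_i$ is processed'' is ambiguous; the sweep should be understood as iterating over pairs $(S,i)$ with $S \in \mathcal{A}_i$ (i.e., the disjoint union of the families), and with that reading the argument goes through verbatim. This is a clarification, not a gap.

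As for comparison with Robertson and Seymour's original: their (8.6) is essentially the single-family min-max for subtrees of a tree (a Helly-type duality), and (8.7) extends it. Your proof handles the $m$-family version in one pass via a charging argument, giving the stated $< mk$ bound directly without invoking the single-family result as a black box. This is a legitimate, arguably cleaner route than composing single-family applications, which would require extra care because removing the vertices of already-chosen subtrees can disconnect the remaining ones.
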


We use the following Ramsey-type lemma on trees in order to deal with the first outcome of \cref{lem:tree}. (Given the $\mathcal{B}_1$, $\mathcal{B}_2$, $\ldots$, $\mathcal{B}_m$, we form a new tree $\widehat{T}$ from $T$ by contracting each subtree in $\mathcal{B}_1 \cup \mathcal{B}_2 \cup\ldots\cup \mathcal{B}_m$ into a single vertex. Thus we end up with disjoint sets of vertices $R_1$, $R_2$, $\ldots$, $R_m$ in $\widehat{T}$ to which we can apply the following lemma.)

\begin{lemma}\label{lem:subsets-of-tree}
    Let $T$ be a tree, let $k$ be a positive integer, and let $R_1$, $R_2$, $\ldots$, $R_m$ be pairwise disjoint sets of vertices of $T$ such that $\abs{R_i}> (mk-1)^2$ for each $i\in[m]$.
    Then there exists a subtree $T'$ of $T$ and sets $R_1'$, $R_2'$, $\ldots$, $R_m'$ such that 
    for each $i\in[m]$, $R_i'\subseteq R_i\cap V(T')$, $\abs{R_i'}=k$, 
    and each vertex in $R_i'$ has degree at most $m^2+1$ in~$T'$.
\end{lemma}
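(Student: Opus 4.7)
The plan is to choose, for each $i \in [m]$, an arbitrary subset $A_i \subseteq R_i$ with $\abs{A_i} = mk$ (possible since $\abs{R_i} > (mk-1)^2 \ge mk-1$), take $T'$ to be the minimal subtree of $T$ spanning $\bigcup_{i \in [m]} A_i$, and select $R_i' \subseteq A_i$ by discarding the vertices of too-high degree in $T'$. By minimality, every leaf of $T'$ belongs to $\bigcup_{i \in [m]} A_i$, so $T'$ has at most $m^2 k$ leaves; in particular $R_i' \subseteq A_i \subseteq R_i \cap V(T')$ automatically.

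Next, applying the handshake identity to the tree $T'$ gives
\[
    \sum_{v \,:\, \deg_{T'}(v) \ge 3}\bigl(\deg_{T'}(v) - 2\bigr) \;=\; \abs{\mathrm{Leaves}(T')} - 2 \;\le\; m^2 k - 2.
\]
Letting $H := \{v \in V(T') : \deg_{T'}(v) > m^2 + 1\}$, each $v \in H$ contributes at least $m^2$ to the sum above, so $\abs{H} < k$, i.e., $\abs{H} \le k - 1$. Therefore, for each $i$, $\abs{A_i \setminus H} \ge mk - (k-1) = (m-1)k + 1$. When $m \ge 2$ this is at least $k$, so any $k$-subset $R_i' \subseteq A_i \setminus H$ meets the degree bound, completing this case.

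The boundary case $m = 1$ is not covered by the counting (the slack $(m-1)k$ vanishes) and is the main obstacle. I would handle it by rooting $T$ at an arbitrary vertex and applying Mirsky's theorem (the poset form of Erdős–Szekeres) to $R_1$ viewed as a poset under the ancestor relation: since $\abs{R_1} > (k-1)^2$, there is either a chain or an antichain $A_1 \subseteq R_1$ of size $k$. If $A_1$ is a chain, it lies on a single root-to-leaf path, so taking $T'$ to be that path makes every vertex of $A_1$ have degree at most $2 = m^2 + 1$. If $A_1$ is an antichain, pairwise incomparability forces each vertex of $A_1$ to be a leaf of the minimal subtree $T'$ spanning $A_1$, giving degree $1$; in either subcase $R_1' := A_1$ works. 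The remaining check is the book-keeping verification that these choices really satisfy all the conclusions of the lemma, which is immediate from the construction.
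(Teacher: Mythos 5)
Your proof is correct, but it departs from the paper's argument in an interesting way. For $m \ge 2$, your argument is cleaner and more local: you pick an \emph{arbitrary} $mk$-subset $A_i \subseteq R_i$, take $T'$ to be the minimal subtree spanning $\bigcup_i A_i$, and count leaves of $T'$ globally via the handshake identity to bound the number of vertices of degree exceeding $m^2+1$ by $k-1$. The paper instead fixes a vertex $v$ and, for each $i$, builds a subtree $T_i$ as a union of $mk$ paths out of $v$ (or a single long path through many $R_i$-vertices); this anchoring guarantees that the $mk$ selected vertices $Q_i \subseteq R_i$ have degree at most $2$ in their own $T_i$, and the leaf-count is done per $T_i$ (at most $k$ vertices of degree $\ge m+2$), with the final degree bound $2 + (m-1)(m+1) = m^2+1$ coming from summing contributions across the $T_j$. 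The payoff of the paper's anchored construction is uniformity: it needs no case split, because the ``degree at most $2$ in $T_i$'' slack is exactly what makes $m=1$ work. Your global count loses that slack, which you correctly diagnose: for $m=1$ the bound $\abs{A_1 \setminus H} \ge (m-1)k+1 = 1$ collapses, and your Erd\H{o}s--Szekeres / Mirsky patch (chain or antichain of size $k$ in the ancestor poset, giving either a path or a set of leaves of the spanning subtree) fills the gap. I checked the antichain claim (every member of an antichain is a leaf of the minimal spanning subtree): it holds because if $u \in A_1$ had degree $\ge 2$ in $T'$, minimality forces $A_1$-vertices in at least two components of $T'-u$, and at least one of those components consists entirely of descendants of $u$ in the rooted $T$, contradicting incomparability. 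A side remark: your $m\ge 2$ argument only uses $\abs{R_i}\ge mk$, which is strictly weaker than the stated hypothesis; the full quadratic bound is only needed for the $m=1$ Ramsey step (and, in the paper's proof, for the pigeonhole case where few maximal paths exist).
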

\begin{proof}
    Fix an arbitrary vertex $v$ of $T$. For each $i\in [m]$, let $\mathcal P_i$ be the set of all maximal paths in $T$ whose ends are $v$ and some vertex in~$R_i$. If $\mathcal P_i$ has at least $mk$ distinct paths, then let $T_i$ be the subtree formed by the union of the $mk$ paths in $\mathcal P_i$, and let $Q_i\subseteq R_i$ be the set of ends of the $mk$ paths in $\mathcal{P}_i$, where we choose the end that is not $v$. If $\mathcal P_i$ has less than $mk$ distinct paths, then 
    since $\abs{R_i}>(mk-1)^2$, 
    by the pigeonhole principle one of the paths, say $T_i$, contains at least $mk$ vertices of $R_i$. 
    Let $Q_i\subseteq R_i$ be a set of such $mk$ vertices on~$T_i$. 

    In either case, 
    $T_i$ has at most $mk+1$ leaves
    and every vertex in $Q_i$ has degree at most~$2$ in~$T_i$.
    If $x$ is the number of vertices of degree at least $m+2$ in $T_i$, then, by summing the degrees of the vertices and counting vertices of degree at least $m+2$, of degree between~$2$ and $m + 1$, and of degree exactly~$1$, we find that
    \[ x(m+2) + 2(\abs{V(T_i)}-x-(mk+1))
    + (mk+1) \leq 2 \abs{E(T_i)} 
    < 2 \abs{V(T_i)}.\]
    This implies that $x\leq k$, so 
    $T_i$ has at most $k$ vertices of degree at least $m+2$.
    Let $X_i$ be the set of all vertices of $T_i$ of degree at least $m+2$. 

    Let $T'$ be the subgraph of $T$ whose vertex set and edge set are the union of the vertex sets and edge sets of $T_1, T_2, \ldots, T_m$.
    Since $v\in V(T_i)$ for all $i\in[m]$, $T'$ is a subtree of~$T$.
    Let $R_i'=Q_i \setminus \bigcup_{j\in[m]\setminus\{i\}} X_j$. 
    Then $\abs{R_i'}\ge mk - (m-1)k=k$.

    Now it remains to see that for each $i \in [m]$, each vertex $w$ in $R_i'$ has degree at most $m^2+1$ in $T'$.
    Note that each $T_j$ with $j\neq i$ contains at most $m+1$ edges incident with~$w$. 
    Also, $T_i$ contains at most two edges incident with~$w$. 
    Thus, the total number of edges incident with~$w$ in~$T'$ is at most $(m+1)(m-1)+2\le m^2+1$.
\end{proof}

\section{Perturbations and robustness}
\label{sec:perturbations}
In this section, we introduce the key definitions of perturbations and robustness.

For a non-negative integer $t$, we say that a graph $G_1$ is a \emph{$t$-perturbation} of a graph $G_2$ if $V(G_1)=V(G_2)$ and there exists a graph $G$ on $\abs{V(G_1)}+t$ vertices such that both $G_1$ and $G_2$ are vertex-minors of~$G$. Note that this relation is symmetric; $G_1$ is a $t$-perturbation of~$G_2$ if and only if $G_2$ is a $t$-perturbation of $G_1$. Also note that if $G_1$ is a $t$-perturbation of~$G_2$, then any graph that is locally equivalent to $G_1$ is also a $t$-perturbation of $G_2$.

For a graph~$H$ and a non-negative integer~$t$, we say that a graph $G$ is \emph{$t$-robust for $H$} if every $t$-perturbation of~$G$ contains a vertex-minor isomorphic to~$H$. So, informally, a graph~$G$ is robust for $H$ if no small perturbation of~$G$ can destroy $H$ as a vertex-minor.

Perturbations can be ``chained together'' due to the following lemma.
\begin{lemma}
    \label{lem:pertProperties}
    Let $s,t$ be non-negative integers, and let $G_1$, $G_2$, $G_3$ be graphs. 
    If $G_2$ is an $s$-perturbation of $G_1$ and $G_3$ is a $t$-perturbation of $G_2$, then $G_3$ is an $(s+t)$-perturbation of~$G_1$.
\end{lemma}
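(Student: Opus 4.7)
The plan is to exhibit a single host graph $F$ on $\abs{V(G_1)}+s+t$ vertices that contains both $G_1$ and $G_3$ as vertex-minors, obtained by gluing together the two witnesses for the two perturbations along their common vertex set. Write $V$ for the common vertex set $V(G_1)=V(G_2)=V(G_3)$. By hypothesis there is a graph $F_1$ on $V\cup A$ with $\abs{A}=s$ and $A\cap V=\emptyset$ such that both $G_1$ and $G_2$ are vertex-minors of $F_1$, and a graph $F_2$ on $V\cup B$ with $\abs{B}=t$ and $B\cap V=\emptyset$ such that both $G_2$ and $G_3$ are vertex-minors of $F_2$. By renaming we may assume $A\cap B=\emptyset$.

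The key tool is the equivalent characterization of vertex-minor recalled in \cref{sec:prelims}: a graph is a vertex-minor of $K$ exactly when it is an induced subgraph of some graph locally equivalent to $K$. Applying this to $G_2$ as a vertex-minor of $F_1$, I get a graph $F_1'$ locally equivalent to $F_1$ with $F_1'[V]=G_2$. Similarly I get $F_2'$ locally equivalent to $F_2$ with $F_2'[V]=G_2$. Since local equivalence preserves vertex-minors, $G_1$ is still a vertex-minor of $F_1'$ and $G_3$ is still a vertex-minor of $F_2'$.

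Now I construct $F$ on the vertex set $V\cup A\cup B$ by specifying its edges: the edges inside $V$ are the edges of $G_2$; any edge with at least one endpoint in $A$ and the other in $V\cup A$ is copied from $F_1'$; any edge with at least one endpoint in $B$ and the other in $V\cup B$ is copied from $F_2'$; and there are no edges between $A$ and $B$. Since $F_1'[V]=F_2'[V]=G_2$, this description is consistent, so $F$ is a well-defined simple graph. By construction $F-B=F_1'$ and $F-A=F_2'$ as induced subgraphs of $F$.

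To finish, note that deleting a set of vertices from $F$ produces an induced subgraph which is in particular a vertex-minor of $F$. Hence $F_1'=F-B$ is a vertex-minor of $F$, and since $G_1$ is a vertex-minor of $F_1'$, transitivity of the vertex-minor relation gives that $G_1$ is a vertex-minor of $F$. Symmetrically $G_3$ is a vertex-minor of $F$. Because $\abs{V(F)}=\abs{V}+s+t=\abs{V(G_1)}+s+t$, this witnesses that $G_3$ is an $(s+t)$-perturbation of $G_1$. There is no real obstacle in this argument: the only thing to be careful about is that the gluing is consistent on the common vertex set $V$, which is precisely why the passage to the locally equivalent representatives $F_1'$ and $F_2'$ (inducing the same graph $G_2$ on $V$) is necessary.
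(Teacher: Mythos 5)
Your proof is correct and essentially the same as the paper's: both proofs use the observation that one can locally complement the two witness graphs so that $G_2$ appears as an induced subgraph in each, and then glue the two witnesses along that common induced subgraph to obtain a single host on $\abs{V(G_1)}+s+t$ vertices containing $G_1$ and $G_3$ as vertex-minors. The extra detail you supply (no edges between the private parts $A$ and $B$, and the explicit check that $F-B=F_1'$, $F-A=F_2'$) is exactly what ``identifying along the common induced subgraph'' means in the paper's more terse phrasing.
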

\begin{proof}
    Observe that since $G_2$ is an $s$-perturbation of~$G_1$, there is a graph $G$ on $\abs{V(G_2)}+s$ vertices which contains $G_2$ as an induced subgraph and $G_1$ as a vertex-minor.
    Similarly, there is a graph~$G'$ on $\abs{V(G_2)}+t$ vertices that contains $G_2$ as an induced subgraph and $G_3$ as a vertex-minor. Take $G''$ to be the graph on $\abs{V(G_2)}+s+t$ vertices which is obtained by identifying $G$ and $G'$ along their common induced subgraph $G_2$. Then both $G_1$ and $G_3$ are vertex-minors of $G''$,
    and therefore $G_3$ is an $(s+t)$-perturbation of~$G_1$.
\end{proof}

An important corollary is that small perturbations cannot destroy robustness.

\begin{lemma}\label{lem:robustCommute}
    Let $H$ be a graph, and let $r$ and $t$ be non-negative integers with $t \geq r$. If $G$ is a graph that is $t$-robust for $H$, then any $r$-perturbation of~$G$ is $(t-r)$-robust for $H$.
\end{lemma}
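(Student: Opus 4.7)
The plan is to unfold the definitions and chain the two perturbations together using \cref{lem:pertProperties}. Let $G'$ be an $r$-perturbation of $G$ where $G$ is $t$-robust for $H$, and let $t \geq r$. To show $G'$ is $(t-r)$-robust for $H$, I need to verify that every $(t-r)$-perturbation of $G'$ contains $H$ as a vertex-minor.

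So I would fix an arbitrary $(t-r)$-perturbation $G''$ of $G'$. Now I have the chain: $G'$ is an $r$-perturbation of $G$, and $G''$ is a $(t-r)$-perturbation of $G'$. By \cref{lem:pertProperties}, $G''$ is an $(r + (t-r)) = t$-perturbation of $G$. Since $G$ is $t$-robust for $H$ by hypothesis, the definition of robustness immediately gives that $G''$ contains a vertex-minor isomorphic to $H$. Since $G''$ was an arbitrary $(t-r)$-perturbation of $G'$, this shows $G'$ is $(t-r)$-robust for $H$.

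There is essentially no obstacle here: the lemma is a direct bookkeeping consequence of the additivity of perturbation parameters established in \cref{lem:pertProperties} together with the definition of robustness. The only subtlety worth flagging is that \cref{lem:pertProperties} is invoked with $s=r$ and with the roles of the three graphs being $G_1 = G$, $G_2 = G'$, $G_3 = G''$, so one should briefly note that an $r$-perturbation in the sense used here is symmetric (which is remarked on right after the definition), ensuring that the hypotheses of \cref{lem:pertProperties} are literally satisfied.
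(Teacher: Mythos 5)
Your proof is correct and is essentially identical to the paper's: both chain \cref{lem:pertProperties} with the definition of robustness to conclude that any $(t-r)$-perturbation of an $r$-perturbation of $G$ is a $t$-perturbation of $G$. The closing remark about symmetry is harmless but unnecessary — with $G_1=G$, $G_2=G'$, $G_3=G''$ the hypotheses of \cref{lem:pertProperties} are already met directly.
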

\begin{proof}
    Let $G_1$ be an $r$-perturbation of~$G$. 
    Let $G_2$ be a $(t-r)$-perturbation of~$G_1$.
    Then by \cref{lem:pertProperties}, $G_2$ is a $t$-perturbation of~$G$. Since $G$ is $t$-robust for~$H$, we deduce that $G_2$ has a vertex-minor isomorphic to~$H$. 
    Thus, $G_1$ is $(t-r)$-robust for~$H$.
\end{proof}

We frequently make use of some specific types of perturbations throughout the paper.
Thus, the rest of this section is dedicated to proving two lemmas which help us find perturbations. 
The first lemma says that we can use a small perturbation to remove the edges crossing a set of small cut-rank.
For a graph $G$ and a subset $X$ of $V(G)$, we write $\delta_G(X)$ to denote the set of all edges having one end in $X$ and the other end in $V(G)\setminus X$.

\begin{lemma}\label{lem:pertCutRank}
    For any graph $G$ and subset $X$ of $V(G)$, the graph $G-\delta_G(X)$ is a $2\rho_G(X)$-perturbation of~$G$.
\end{lemma}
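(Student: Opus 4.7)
The plan is to build, for $r:=\rho_G(X)$, a single host graph $\widetilde G$ on the vertex set $V(G)\cup\{u_1,w_1,\dots,u_r,w_r\}$ having both $G-\delta_G(X)$ and $G$ as vertex-minors; this is exactly what it means for $G-\delta_G(X)$ to be a $2r$-perturbation of $G$.

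First I would unfold the definition of cut-rank. Let $B$ be the submatrix of the adjacency matrix of $G$ with rows indexed by $X$ and columns indexed by $V(G)\setminus X$. Then $B$ has rank $r$ over $\mathbb{F}_2$, so I can factor $B=\sum_{i=1}^{r}b_ic_i^{\top}$ with $b_i\in\mathbb{F}_2^{X}$ and $c_i\in\mathbb{F}_2^{V\setminus X}$. Setting $X_i:=\{x\in X:b_i(x)=1\}$ and $Y_i:=\{y\in V\setminus X:c_i(y)=1\}$, a pair $(x,y)\in X\times(V\setminus X)$ is an edge of $G$ if and only if $\sum_{i=1}^{r}b_i(x)c_i(y)\equiv 1\pmod 2$, equivalently, exactly an odd number of the indices $i$ satisfy both $x\in X_i$ and $y\in Y_i$.

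Next I would define $\widetilde G$ by starting from $G-\delta_G(X)$ and, for each $i\in[r]$, attaching two fresh vertices $u_i,w_i$ together with all edges from $u_i$ to $X_i$, all edges from $w_i$ to $Y_i$, and the single edge $u_iw_i$; no other edges involving the new vertices are introduced. Deleting every $u_i,w_i$ immediately recovers $G-\delta_G(X)$, giving one vertex-minor. To realise $G$ as a vertex-minor of $\widetilde G$, I would pivot on each edge $u_iw_i$ (in any order) and then delete all the new vertices. The key calculation is that a single pivot on $u_iw_i$ toggles precisely the bipartite edges between $X_i$ and $Y_i$ inside $V(G)$: unfolding the identity $G\pivot uv=G*u*v*u$, one checks that the vertices of $V(\widetilde G)\setminus\{u_i,w_i\}$ split according to their neighbours in $\{u_i,w_i\}$ into $X_i$ (neighbour of $u_i$ only), $Y_i$ (neighbour of $w_i$ only), the empty class of common neighbours (since $X\cap(V\setminus X)=\emptyset$), and the remaining vertices (neighbour of neither); the only pairs whose adjacency is toggled are those in $X_i\times Y_i$. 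Summing across $i$, each pair $(x,y)\in X\times(V\setminus X)$ is toggled $\sum_i b_i(x)c_i(y)$ times, which is $1$ precisely when $xy\in\delta_G(X)$. Starting from the empty bipartite adjacency of $G-\delta_G(X)$ we therefore recover exactly $G$ on $V(G)$, and deletion of the new vertices finishes the argument.

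The main point requiring care is the independence of the pivots at distinct pairs. I would check that for $j\neq i$ the vertices $u_j,w_j$ begin non-adjacent to both $u_i$ and $w_i$, so that they sit in the ``neither'' class for the pivot on $u_iw_i$; since the pivot modifies no edge incident to such a vertex, this non-adjacency persists afterwards. Consequently the three-class structure used to analyse each subsequent pivot at $u_jw_j$ is preserved, the pivots may be performed in any order, and their cumulative effect on $V(G)$ is precisely to reintroduce the edges of $\delta_G(X)$, which is exactly what is needed to exhibit $\widetilde G$ as the required witness.
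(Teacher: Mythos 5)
Your proof is correct and takes essentially the same approach as the paper's: decompose the bipartite adjacency matrix across $X$ into $r$ rank-one pieces, attach one auxiliary pivot pair per piece, and observe that each pivot $u_iw_i$ toggles precisely the edges in $X_i\times Y_i$ (with the $V_3$ class empty and the other auxiliary pairs sitting untouched in the ``neither'' class). The only cosmetic difference is the direction: the paper builds the host on top of $G$ and pivots to reach $G-\delta_G(X)$, while you build on top of $G-\delta_G(X)$ and pivot to reach $G$; by the involutive nature of pivoting these are identical arguments.
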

\begin{proof}
    Let $Y=V(G)\setminus X$ and $r=\rho_G(X)$. 
    We may assume that $r\neq 0$.
    By a well-known property in linear algebra, every matrix of rank $r$ is a sum of $r$ rank-one matrices. (Here and throughout the paper, we are working over the binary field.)
    Observe that a binary matrix of rank $1$ is precisely described by a nonempty set of rows and a nonempty set of columns such that the entries between those rows and columns are all $1$'s and other entries are all~$0$'s.
    Therefore, there exist nonempty $X_1, X_2, \ldots, X_r\subseteq X$ and $Y_1, Y_2, \ldots, Y_r\subseteq Y$ such that 
    a vertex $x\in X$ is adjacent to a vertex $y\in Y$ if and only if 
    there are an odd number of $i$'s such that $(x,y)\in X_i\times Y_i$.

    Let us construct a graph $\widehat G$ by adding $2r$ vertices $x_1$, $x_2$, $\ldots$, $x_r$, $y_1$, $y_2$, $\ldots$, $y_r$ to $G$ and making $x_i$ adjacent to all vertices in $X_i$ and $y_i$ adjacent to all vertices in $Y_i\cup \{x_i\}$.
    Then 
    \[ G-\delta_G(X)=\widehat G\pivot x_1y_1\pivot x_2y_2\pivot\cdots\pivot x_ry_r -\{x_1,x_2,\ldots,x_r,y_1,y_2,\ldots,y_r\}.\]  
    Therefore $G-\delta_G(X)$ is a $2r$-perturbation of~$G$.
\end{proof}

The second lemma states that for any set~$X$ of vertices of small cut-rank in a graph~$G$, all vertex-minors of $G$ with vertex set $X$ are small perturbations of each other. 
This lemma is essentially a refinement of~\cite[Theorem 1.10]{circlePivoting}. However, we provide a direct proof for the sake of readability. We suspect that this lemma holds for a linear function in the cut-rank. Still, we do not try to optimize the function since there are other roadblocks to obtaining an efficient function for \cref{thm:main} anyway (notably, our application of \cref{thm:wqo}).


\begin{lemma}\label{lem:vm-perturbation}
    Let $G$ be a graph and $X$ be a subset of $V(G)$.
    If $G_1$ and $G_2$ are vertex-minors of~$G$ which have vertex set $X$, then $G_1$ is a $(2^{\rho_G(X)+1})$-perturbation of $G_2$.
\end{lemma}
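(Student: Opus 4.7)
The plan is to construct a vertex-minor $G^\star$ of $G$ with $V(G^\star) \supseteq X$ and $|V(G^\star) \setminus X| \leq 2^{r+1}$, where $r := \rho_G(X)$, such that every vertex-minor of $G$ with vertex set $X$ is also a vertex-minor of $G^\star$. Once this is established, since $G_1$ and $G_2$ are both vertex-minors of $G$ with vertex set $X$, they are both vertex-minors of $G^\star$. Hence $G^\star$ directly witnesses that $G_1$ is a $2^{r+1}$-perturbation of $G_2$.

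Let $Y := V(G) \setminus X$ and, for each $y \in Y$, call $\tau(y) := N_G(y) \cap X$ the \emph{type} of $y$. Since the bipartite adjacency matrix between $X$ and $Y$ has rank $r$ over the binary field, the column space contains at most $2^r$ vectors, so the set $\{\tau(y) : y \in Y\}$ has at most $2^r$ elements. Cut-rank is invariant under local complementation, so this bound is maintained throughout any sequence of local operations we perform. The construction of $G^\star$ will reduce the $Y$-side to at most two vertices per type, yielding $|V(G^\star) \setminus X| \leq 2 \cdot 2^r = 2^{r+1}$ as desired.

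The key step is the following reduction claim: \emph{whenever there exist three distinct vertices $y_1, y_2, y_3 \in Y$ of the same type, there is a sequence of local complementations on $G$ followed by the deletion of one of the $y_i$ producing a vertex-minor $G'$ of $G$ such that every vertex-minor of $G$ with vertex set $X$ is also a vertex-minor of $G'$.} Iterating this claim drives the number of $Y$-vertices of each type down to at most two, producing $G^\star$. To prove the claim, I would exploit the fact that three vertices sharing a common type $\tau$ give substantial redundancy relative to $X$, and case-analyze on the graph induced on $\{y_1, y_2, y_3\}$. In the cleanest case, when $y_1$ and $y_2$ are non-adjacent, they are twins in $G[X \cup \{y_1, y_2\}]$; a combination of local complementations and a pivot at an edge incident with $y_3$ can ``transfer'' the $Y$-adjacencies of $y_1$ onto $y_2$ or $y_3$, after which $y_1$ has the same neighborhood as $y_2$ in the entire graph and so any operation sequence using $y_1$ can be rerouted through $y_2$, making $y_1$ safely deletable. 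The remaining configurations (a triangle on $\{y_1, y_2, y_3\}$, a path, or three mutually non-adjacent twins with arbitrary $Y$-adjacencies) are handled by first applying a local complementation at one of the $y_i$'s to reach the previous case, using \cref{lem:bouchet3Ways} to account for the effect on $X$.

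The main obstacle is verifying carefully, in each configuration, that the operations used never alter the set of vertex-minors of $G$ with vertex set $X$, and that redundancy-by-twinning can always be achieved from any starting configuration of three same-type vertices. In essence, the hard content is the structural fact that triples of $X$-twins are always locally reducible; once this is granted, the linear-algebraic counting argument yields the stated $2^{r+1}$ bound immediately.
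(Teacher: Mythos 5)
Your approach differs fundamentally from the paper's, and the difference is where the gap lies. You aim to construct a single vertex-minor $G^\star$ of $G$ with at most $2^{r+1}$ vertices outside $X$ that preserves \emph{every} vertex-minor of $G$ with vertex set $X$. This is a strictly stronger statement than the lemma itself. The entire weight of your argument rests on the ``reduction claim'' (three vertices in $Y$ sharing a type are reducible while preserving all vertex-minors on $X$), which you state but do not prove. The sketch---``a combination of local complementations and a pivot at an edge incident with $y_3$ can transfer the $Y$-adjacencies of $y_1$ onto $y_2$ or $y_3$''---is too vague to check: same-type vertices can have arbitrary, unrelated neighborhoods inside $Y$, and it is not clear that any sequence of pivots ``transfers'' one such $Y$-neighborhood onto another vertex. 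You explicitly flag this gap yourself (``The main obstacle is verifying carefully\ldots''), and it is a genuine one, not a routine verification.

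It is also worth seeing why the paper does \emph{not} try to prove your universal claim. The paper takes $G$ vertex-minor-minimal containing the two specific graphs $G_1$ and $G_2$, and uses \cref{lem:bouchet3Ways} as follows: after making $G_i$ an induced subgraph, two vertices outside $X$ with the same neighborhood in $X$ witness that $G_i$ survives in at least two of the three graphs $G-v$, $G*v-v$, $G/v$. Hence at most $2^r$ vertices are ``unique'' for $G_i$. If there were more than $2^{r+1}$ vertices outside $X$, some $v$ would be non-unique for \emph{both} $G_1$ and $G_2$; each then survives in at least two of the three options, so by pigeonhole some single option keeps both, contradicting minimality. This pigeonhole step only works for \emph{two} targets: with three or more vertex-minors $H_1, H_2, H_3$, each surviving in at least two of the three options can still have empty common intersection (take $\{1,2\}$, $\{2,3\}$, $\{1,3\}$). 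So the naive twin argument cannot yield the universal $G^\star$ you want, and your route would need a genuinely new idea at exactly the point your sketch is hand-waving. As written, the proposal has a missing key step.
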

\begin{proof}
    Let $r=\rho_G(X)$ for convenience. We may assume that 
    $G$ is a vertex-minor-minimal graph containing both $G_1$ and $G_2$ as vertex-minors. 
    
    We say that a vertex $v$ of $G$ is \emph{unique for $G_1$} if $G_1$ is a vertex-minor of exactly one of the three graphs $G-v$, $G*v-v$, and $G/v$. Recall from \cref{lem:bouchet3Ways} that if $v$ is not a vertex of $G_1$, then at least one of these three graphs contains $G_1$ as a vertex-minor, and the number of these three graphs which contain $G_1$ as a vertex-minor is invariant under local complementation in $G$. We define which vertices are \emph{unique for $G_2$} analogously. 
    
    We claim that for each $i\in\{1,2\}$, at most $2^r$ vertices are unique for~$G_i$. By locally complementing in $G$, we may assume (only for this claim) that $G_i$ is an induced subgraph of $G$. Note that any binary matrix of rank $r$ has at most $2^r$ distinct columns. Suppose that $u,v\in V(G)\setminus V(G_i)$ are distinct vertices with the same neighbors in $G_i$. If $u$ and $v$ are non-adjacent, then $G_i$ is a vertex-minor of both $G-v$ and $G*v-v$ since $(G*v-v)*u$ contains $G_i$ as an induced subgraph. If $u$ and $v$ are adjacent, then $G_i$ is a vertex-minor of both $G-v$ and $G/v$, since $(G\times uv-v)$ contains $G_i$ as an induced subgraph. The claim follows.

    Now, if $G$ has more than $2^{r+1}$ vertices outside of $X$, then there exists a vertex $v$ outside of $X$ which is not unique for $G_1$ or for $G_2$. Then both $G_1$ and $G_2$ are vertex-minors of at least one of $G$, $G*v-v$, and $G/v$. This contradicts the minimality of $G$. Thus, $G$ has at most $2^{r+1}$ vertices outside of $X$. So $G$ itself shows that $G_1$ is a $(2^{r+1})$-perturbation of $G_2$, since both $G_1$ and $G_2$ are vertex-minors of $G$. This completes the proof of \cref{lem:vm-perturbation}.
\end{proof}

\section{Many small disjoint robust parts}
\label{sec:RobustParts}

This section is dedicated to proving \cref{prop:chain}. Roughly, this proposition says that in any graph of small rank-width, either we can find the desired perturbation, or we can find a vertex-minor that has many disjoint sets of vertices so that each set 
\begin{enumerate*}[label=\arabic*)]
\item induces a graph that is robust, and 
\item has bounded size.
\end{enumerate*} 
We also require the sets to have very small cut-rank; the bound on the size depends on the desired robustness, but the bound on the cut-rank does not. Throughout this section, we assume that $H$ has no isolated vertices. We deal with isolated vertices of $H$ later on.

First, we prove a lemma which gives either the desired perturbation, or many disjoint robust induced subgraphs.
Given a subtree $T'$ of a rank-decomposition~$T$ of a graph~$G$, we write $G(T')$ for the subgraph of $G$ induced by all vertices of~$G$ which are also vertices of~$T'$.

\begin{lemma}\label{lem:manyparts}
    Let $k$, $t$, and $r$ be positive integers, and let $H$ be a graph with no isolated vertices and with components $H_1$, $H_2$, $\ldots$, $H_m$.
    Then for every graph $G$ and every rank-decomposition~$T$ of~$G$ of width at most $r$, at least one of the following holds.
    \begin{enumerate}
    \item There exists $i \in [m]$ such that there is a $(4rmk+2tmk)$-perturbation of~$G$ which has no vertex-minor isomorphic to~$H_i$.
    \item There exist pairwise vertex-disjoint subtrees $(T_{i,j}:i \in [m], j \in [k])$ of~$T$ such that for all $i\in [m]$ and $j\in [k]$, the graph $G(T_{i,j})$ is $t$-robust for $H_i$.
    \end{enumerate}
\end{lemma}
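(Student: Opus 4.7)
The plan is to apply \cref{lem:tree} to suitably chosen ``robustness families'' of subtrees and, in the bad case, to combine local $t$-perturbations into a global one via \cref{lem:pertCutRank} and \cref{lem:pertProperties}. I would let $\mathcal{A}_i$, for each $i \in [m]$, be the family of all subtrees $T'$ of $T$ such that $G(T')$ is $t$-robust for $H_i$, and apply \cref{lem:tree} to $(\mathcal{A}_1, \ldots, \mathcal{A}_m)$ with parameter $k$. In the first outcome of \cref{lem:tree} I obtain pairwise vertex-disjoint subfamilies $\mathcal{B}_i \subseteq \mathcal{A}_i$ of size $k$, and these directly supply the subtrees $T_{i,j}$ required by outcome~2 of the present lemma. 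The interesting case is the second outcome, where I get an index $i \in [m]$ and a set $X \subseteq V(T)$ with $\abs{X} < mk$ whose removal from $T$ destroys every subtree in $\mathcal{A}_i$.

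In this case, letting $T_1, \ldots, T_\kappa$ be the components of $T - X$ and $V_c = V(G(T_c))$, the hypothesis gives that each $G[V_c]$ is not $t$-robust for $H_i$, so a $t$-perturbation $\tilde{G}_c$ of $G[V_c]$ with no vertex-minor isomorphic to $H_i$ exists. I would take as target the graph $\tilde{G}$ on $V(G)$ defined as the disjoint union
\[
\tilde{G} \;=\; \tilde{G}_1 \,\sqcup\, \tilde{G}_2 \,\sqcup\, \cdots \,\sqcup\, \tilde{G}_\kappa \,\sqcup\, I_{L(X)},
\]
where $L(X)$ is the set of leaves of $T$ in $X$ and $I_{L(X)}$ is the edgeless graph on $L(X)$. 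Since $H$ has no isolated vertices, the component $H_i$ is connected and contains at least one edge. Because local complementation does not merge connected components and vertex deletion only refines them, any vertex-minor of $\tilde{G}$ isomorphic to $H_i$ must be realised inside a single connected component of $\tilde{G}$; but no $\tilde{G}_c$ has such a vertex-minor, and an edgeless graph has only edgeless vertex-minors. Hence $\tilde{G}$ has no vertex-minor isomorphic to $H_i$.

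To certify that $\tilde{G}$ is a $(4rmk + 2tmk)$-perturbation of $G$ I would chain two phases via \cref{lem:pertProperties}. First, I would contract each subtree $T_c$ to a single meta-vertex in $T$ to obtain a tree $T'$ on $\abs{X} + \kappa$ vertices, and use the fact that $T$ is subcubic to bound $\kappa \leq 2\abs{X} + 1 \leq 2mk - 1$. Rooting $T'$ at some meta-vertex and cutting, for each other meta-vertex, the first edge on the path to the root gives $\kappa - 1$ edges of $T$---each of width at most $r$---whose corresponding cuts in $G$ can be realised by \cref{lem:pertCutRank} at cost $2r$ each, placing the $V_c$'s in different connected components at total cost at most $2r(\kappa - 1) \leq 4r(mk-1)$; a small additional contribution then isolates each vertex of $L(X)$ using single-vertex applications of \cref{lem:pertCutRank}. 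In the second phase, because the components $V_c$ now sit in distinct connected components, a single witness graph adding $t$ new vertices per component installs all of the $\tilde{G}_c$'s in parallel, giving a $\kappa t \leq (2mk-1)\,t$-perturbation. The hardest part will be the bookkeeping: to fit within $(4rmk + 2tmk)$ I must argue that the $\kappa - 1$ meta-tree cuts suffice---rather than the naive $\sim 3\abs{X}$ cuts one would get by separating each component from its complement independently---and absorb the small cost of isolating the leaves in $L(X)$ within the same budget.
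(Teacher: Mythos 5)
Your approach is essentially the same as the paper's: you form the robustness families $\mathcal{A}_i$, apply \cref{lem:tree}, and in the hitting-set case you remove the crossing edges by \cref{lem:pertCutRank} and then install the local $t$-perturbations in parallel via \cref{lem:pertProperties}. Your separation argument (cutting the $\kappa-1$ parent edges in the contracted tree $T'$) is correct: for any distinct meta-vertices $v_c, v_{c'}$, the $T'$-path between them passes through the parent edge of whichever is not an ancestor of the other, so every $G$-edge between $V_c$ and $V_{c'}$ is removed.

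The gap is in the bookkeeping, precisely where you flagged uncertainty. With the bounds you cite, $\kappa-1 \le 2\abs{X} \le 2(mk-1)$ and $\abs{L(X)} \le \abs{X} \le mk-1$, your phase-1 cost is at most $2r(\kappa-1) + 2\abs{L(X)} \le 4r(mk-1) + 2(mk-1) = (4r+2)(mk-1)$, and together with the phase-2 cost $\kappa t \le (2mk-1)t$ the total exceeds $4rmk + 2tmk$ once $mk > 2r + 1$. These two upper bounds cannot both be tight simultaneously, but you treated them independently. The missing ingredient is that deleting a leaf of $T$ never increases the number of components, so in fact $\kappa - 1 \le 2\bigl(\abs{X} - \abs{L(X)}\bigr)$; then phase-1 costs at most $4r\abs{X} - (4r-2)\abs{L(X)} \le 4r\abs{X} < 4rmk$, and the total fits. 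The paper's proof sidesteps the contraction entirely by charging per vertex of $X$: each non-leaf vertex of $X$ is deleted from $T$ one at a time, costing at most two $2r$-perturbations, while each leaf vertex of $X$ costs only a $2$-perturbation, giving $\le 4r\abs{X} < 4rmk$ directly without needing to relate $\kappa$ and $\abs{L(X)}$.
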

\begin{proof}
    For each $i\in[m]$, let $\mathcal{A}_i$ be the set of all subtrees $T'$ of $T$ such that $G(T')$ is $t$-robust for $H_i$. Now we apply \cref{lem:tree}. First suppose that there exist subfamilies $\mathcal{B}_1 \subseteq \mathcal{A}_1,\mathcal{B}_2 \subseteq\mathcal{A}_2,\ldots,\mathcal{B}_m \subseteq\mathcal{A}_m$, each of size $k$, such that the subtrees in $\bigcup_{i\in[m]} \mathcal B_i$ are pairwise vertex-disjoint. This directly yields the second outcome of the statement. 
    
    Thus, we may assume that there exist $i \in [m]$ and a set $X$ of less than $mk$ vertices of $T$ such that $T-X$ contains no subtree in $\mathcal{A}_i$. Let $(V_1, V_2, \ldots, V_\ell)$ be the partition of $V(G)$ into nonempty sets such that 
    two vertices $u$ and $v$ are in the same part if and only if the leaves of $T$ corresponding to $u$ and $v$ are in the same component of $T-X$. (If $T$ has a leaf vertex in $X\cap V(G)$, then that vertex is in its own part.) Since $T$ is subcubic and connected, $\ell \leq 2\abs{X}+1\leq 2mk$. (Each leaf vertex of $T$ in $X\cap V(G)$ contributes one part to $(V_1, V_2, \ldots, V_{\ell})$, and deleting it from $T$ does not increase the number of components. Deleting a non-leaf vertex increases the number of components by at most two.) 
    
    Let $G'$ be the graph obtained from $G$ by removing all edges between different parts of $(V_1, V_2, \ldots, V_\ell)$. We claim that $G'$ is a $4rmk$-perturbation of $G$. To see this, note that $G'$ can be obtained from $G$ as follows. First, for each leaf vertex of $T$ in $X\cap V(G)$, we can isolate that vertex with a $2$-perturbation by~\cref{lem:pertCutRank}, and then combine the perturbations by~\cref{lem:pertProperties}. Next, consider deleting each remaining vertex in $X$ from $T$ one at a time, in an arbitrary order. Suppose that deleting the next vertex breaks a component $T'$ into three new components $T_1, T_2, T_3$. We can remove all edges between vertices in $G(T_1)$ and vertices in $G(T_2)$ or $G(T_3)$ by a $2r$-perturbation by~\cref{lem:pertCutRank}. Then we can remove all edges between vertices in $G(T_2)$ and vertices in $G(T_3)$ by a $2r$-perturbation for the same reason. Finally, we can combine the perturbations by~\cref{lem:pertProperties}. In total, the order of the perturbation is at most $2\cdot 2r\abs{X} \leq 4rmk$.
    
    Note that for each $j \in [\ell]$, we have $G'[V_j]=G[V_j]$. If $V_j$ is a singleton, then $G'[V_j]$ is not $t$-robust for $H_i$ because $H_i$ is not a singleton. Otherwise, there exists a component $T'$ of~$T-X$ so that the vertex set of $G(T')$ is $V_j$. Since $T'$ is not in $\mathcal{A}_i$, this means that $G'[V_j]$ is not $t$-robust for~$H_i$. Thus, for each $j \in [\ell]$, the graph $G'[V_j]$ has a $t$-perturbation $G_j$ that has no vertex-minor isomorphic to $H_i$.
    Let $G''$ be the disjoint union of $G_1$, $G_2$, $\ldots$, $G_\ell$.
    Then $G''$ is a $(4rmk+t\ell)$-perturbation of~$G$.
    Since $H_i$ is connected, if $G''$ has a vertex-minor isomorphic to $H_i$, then $G''$ has a component with a vertex-minor isomorphic to $H_i$.
    Since each component of $G''$ is an induced subgraph of $G_j$ for some $j\in [\ell]$, by the assumption, $G''$ has no vertex-minor isomorphic to $H_i$. 
    Since $\ell\le 2mk$, this yields the first outcome of the statement, and we are done.
\end{proof}

Now we are ready to prove the main proposition of this section. Roughly, it says that we can find the desired perturbation or many disjoint robust parts of bounded size and very small cut-rank. It is important later on that the constant $c$ in the following statement does not depend on~$k$, and that the bound on the cut-rank does not depend on $t$. 

\begin{proposition}\label{prop:chain}
    Let $k$ be a positive integer, let $t, r$ be non-negative integers, and let $H$ be a graph with no isolated vertices and with components $H_1, H_2, \ldots, H_m$. Then there exist constants $c = c_H(t,r)$ and $p=p_H(k,t,r)$ such that for every graph $G$ of rank-width at most~$r$, at least one of the following holds.
    \begin{enumerate}
    \item There exists $i \in [m]$ such that there is a $p$-perturbation of $G$ which has no vertex-minor isomorphic to $H_i$.
    \item There exists a vertex-minor $\widetilde{G}$ of $G$ whose vertex set is the union of pairwise disjoint sets $(X_{i,j}: i \in [m], j \in [k])$ such that for all $i \in [m]$ and $j \in [k]$, we have $\abs{X_{i,j}}\leq c$, the graph $\widetilde{G}[X_{i,j}]$ is $t$-robust for $H_i$, and $\rho_{\widetilde{G}}(X_{i,j})\leq r(m^2+1)$.
\end{enumerate}
\end{proposition}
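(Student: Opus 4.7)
The plan is to combine the two tree lemmas (\cref{lem:manyparts} and \cref{lem:subsets-of-tree}) to extract many pairwise vertex-disjoint, bounded-degree subtrees of the rank-decomposition, each inducing a robust subgraph, and then use well-quasi-ordering (\cref{thm:wqo}) to shrink each subgraph to a small robust representative.

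First I would apply \cref{lem:manyparts} to $G$ and its rank-decomposition $T$ with parameter $k_1 := (mk-1)^2 + 1$ and boosted robustness $t' := t + 2^{c+1}$, where the constant $c = c_H(t,r)$ is chosen as discussed below. This yields either a perturbation of $G$ of order $p := 4 r m k_1 + 2 t' m k_1$ destroying $H_i$ as a vertex-minor for some $i \in [m]$ (which is exactly outcome~1 of the proposition), or pairwise vertex-disjoint subtrees $T_{i,j}$ of $T$ for $i \in [m]$ and $j \in [k_1]$ with each $G(T_{i,j})$ being $t'$-robust for $H_i$. In the second case I would contract each $T_{i,j}$ in $T$ to a single vertex $v_{i,j}$, forming a tree $\widehat T$, then apply \cref{lem:subsets-of-tree} with the $m$ disjoint sets $R_i := \{v_{i,j} : j \in [k_1]\}$ (each of size $k_1 > (mk-1)^2$). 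This produces a subtree $T' \subseteq \widehat T$ and subsets $R_i' \subseteq R_i$ of size exactly $k$ with each vertex of $R_i'$ of degree at most $m^2 + 1$ in $T'$. Un-contracting gives a subtree $T^* \subseteq T$ in which each of the $mk$ chosen subtrees (indexed after relabeling by $(i,j) \in [m] \times [k]$) has at most $m^2+1$ outgoing edges.

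Writing $Y_{i,j}$ for the vertex set of the subgraph induced by the chosen subtree and $V^{**} := \bigsqcup_{i,j} Y_{i,j}$, the key cut-rank bound $\rho_{G[V^{**}]}(Y_{i,j}) \le r(m^2+1)$ follows from submodularity: the at most $m^2+1$ outgoing edges of the chosen subtree in $T^*$ split $V^{**} \setminus Y_{i,j}$ into disjoint parts $Q_\ell$, each of which equals $Q_\ell^+ \cap V^{**}$ for a $T$-subtree with leaf set $Q_\ell^+$ of cut-rank at most $r$ in $G$; a submatrix argument gives $\rho_{G[V^{**}]}(Q_\ell) \le r$ and subadditivity supplies the bound. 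Next, I would apply \cref{thm:wqo} to each class of graphs of rank-width at most $r$ which are $t'$-robust for $H_i$: its pivot-minor-minimal elements form a finite antichain, giving a uniform size bound $c$. For each chosen $(i,j)$, pick a pivot-minor $G^{(i,j)}$ of $G[Y_{i,j}]$ in this minimal class, with vertex set $X_{i,j} \subseteq Y_{i,j}$ satisfying $|X_{i,j}| \le c$, and set $\widetilde G := G\bigl[\bigcup_{i,j} X_{i,j}\bigr]$---a vertex-minor of $G$ obtained purely by vertex deletion.

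To verify the three required properties: the size bound $|X_{i,j}| \le c$ holds by construction; the cut-rank bound $\rho_{\widetilde G}(X_{i,j}) \le r(m^2+1)$ follows from a further submatrix argument, since the matrix for $\rho_{\widetilde G}(X_{i,j})$ has rows $X_{i,j} \subseteq Y_{i,j}$ and columns $\bigcup_{(i',j') \ne (i,j)} X_{i',j'} \subseteq \bigcup_{(i',j') \ne (i,j)} Y_{i',j'}$, so it is a submatrix of the matrix witnessing $\rho_{G[V^{**}]}(Y_{i,j}) \le r(m^2+1)$. For robustness, $\widetilde G[X_{i,j}] = G[X_{i,j}]$ and $G^{(i,j)}$ are both vertex-minors of $G[Y_{i,j}]$ with vertex set $X_{i,j}$, so \cref{lem:vm-perturbation} (with ambient $G[Y_{i,j}]$, using $\rho_{G[Y_{i,j}]}(X_{i,j}) \le |X_{i,j}| \le c$) shows they are $2^{c+1}$-perturbations of each other; since $G^{(i,j)}$ is $t'$-robust with $t' = t + 2^{c+1}$, \cref{lem:robustCommute} implies $\widetilde G[X_{i,j}]$ is $t$-robust for $H_i$, as required.

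The hardest aspect will be the apparent circularity between $c$ and $t'$: the WQO size bound $c$ depends on the robustness level $t'$, while $t' = t + 2^{c+1}$ is chosen so that the perturbation from \cref{lem:vm-perturbation} can be absorbed. Compatibly choosing $c$ and $t'$ requires a fixed-point-type argument relying on the finiteness guaranteed by \cref{thm:wqo}, yielding a constant $c = c_H(t,r)$ as claimed. It is also essential that $\widetilde G$ is taken by pure deletion: performing pivots globally in $G$ to realize each $G^{(i,j)}$ as $\widetilde G[X_{i,j}]$ directly would alter the induced subgraphs on other $X_{i',j'}$'s in ways not captured by the single-ambient \cref{lem:vm-perturbation} application.
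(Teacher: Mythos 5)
The core of your plan is correct: apply \cref{lem:manyparts} for many disjoint robust subtrees, contract and apply \cref{lem:subsets-of-tree} to control cut-rank via bounded degree, and use the WQO theorem to shrink each robust piece to bounded size. But the step where you conclude robustness of $\widetilde{G}[X_{i,j}]$ has a genuine gap that you yourself flagged, and it cannot be patched by a ``fixed-point argument.'' You bound the perturbation between $\widetilde{G}[X_{i,j}] = G[X_{i,j}]$ and the shrunk representative $G^{(i,j)}$ by invoking \cref{lem:vm-perturbation} with ambient graph $G[Y_{i,j}]$ and cut $X_{i,j}$, giving a perturbation of order $2^{\rho_{G[Y_{i,j}]}(X_{i,j})+1}$. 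The only bound you have on $\rho_{G[Y_{i,j}]}(X_{i,j})$ is $\abs{X_{i,j}}\le c$, so the perturbation is $2^{c+1}$; hence you need $t' = t + 2^{c+1}$, while $c$ is produced by WQO from the class of $t'$-robust graphs. The map $t' \mapsto c(t')$ has no reason to be monotone or bounded (as $t'$ grows the class of $t'$-robust graphs shrinks, but its vertex-minor-minimal elements can change non-monotonically and grow in size), so there is no guaranteed fixed point and the constant $c=c_H(t,r)$ of the statement is not obtained.

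The paper avoids this entirely by making the perturbation bound depend only on $r$ and $m$, not on $c$. The key differences from your plan are: (i) $\widetilde{G}$ is \emph{not} obtained from $G$ by pure deletion — it is the induced subgraph of the graph $G_{m,k}$ obtained from $\widehat{G}$ by successively locally complementing inside each $Y_{i,j}$ to reveal $X_{i,j}$; and (ii) to compare $G_{m,k}[X_{i,j}]$ with $G_{i,j}[X_{i,j}]$, one uses the ambient graph $G'$ obtained by deleting $Y_{i,j}\setminus X_{i,j}$ from $G_{i,j}$ but keeping all the other $Y_{i',j'}$'s. In $G'$, the set $X_{i,j}$ is cut off from $V(G')\setminus X_{i,j} = V(\widehat{G})\setminus Y_{i,j}$, and the corresponding submatrix is a submatrix of the one witnessing $\rho_{\widehat{G}}(Y_{i,j}) \le r(m^2+1)$; so $\rho_{G'}(X_{i,j}) \le r(m^2+1)$, making the \cref{lem:vm-perturbation} bound $\ell = 2^{r(m^2+1)+1}$ independent of $c$, and $t' = t + 2\ell$ closes with no circularity. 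Your final-paragraph worry that globally pivoting to realize each $G^{(i,j)}$ would uncontrollably corrupt the other $X_{i',j'}$'s is unfounded and, in fact, backwards: the paper does perform those local complementations globally, and the point is precisely that each round perturbs every other $Y_{i',j'}$ by at most $\ell$ (again via \cref{lem:vm-perturbation} with ambient $\widehat{G}$ and $\rho_{\widehat{G}}(Y_{i',j'})\le r(m^2+1)$), which is absorbed by the $(t+2\ell)$ robustness margin.
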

\begin{proof} 
    We set $\ell = 2^{r(m^2+1)+1}$ and $p = (4rm(mk)^2+2(t+2\ell) m(mk)^2)$. So $\ell$ just depends on $H$ and $r$, and $p$ depends on $H$, $k$, $t$, and $r$. We next define the constant~$c$.
    
    For each $i \in [m]$, let $\mathcal{C}_i$ be the class of all vertex-minor minimal graphs that have rank-width at most $r$ and are isomorphic to a graph that is $(t+\ell)$-robust for $H_i$. 
    Let $\mathcal{C}_i'$ be a subfamily of $\mathcal{C}_i$ which is chosen by selecting one representative of each equivalence class under isomorphism and local equivalence.
    By \cref{thm:wqo}, if $\mathcal{C}_i'$ was infinite, then there would exist distinct $G_1, G_2 \in \mathcal{C}_i'$ so that $G_1$ is isomorphic to a vertex-minor of $G_2$. Since~$\mathcal{C}_i'$ only contains one representative of each equivalence class, $G_1$ has strictly fewer vertices than $G_2$. This contradicts the vertex-minor-minimality of~$G_2$. Thus $\mathcal{C}_i'$ is finite for each $i \in [m]$. It follows that there exists an integer $c=c_H(t,r)$ such that for each $i\in [m]$, every graph which is $(t+\ell)$-robust for $H_i$ contains a vertex-minor that has at most $c$ vertices and is still $(t+\ell)$-robust for $H_i$.

    Now, let $G$ be a graph of rank-width at most $r$, and let $T$ be a rank-decomposition of $G$ of width at most~$r$. By \cref{lem:manyparts}, either there exists $i \in [m]$ such that there is a $p$-perturbation of $G$ which has no vertex-minor isomorphic to $H_i$, or there exist pairwise vertex-disjoint subtrees $(T_{i,j}: i\in [m], j \in [(mk)^2])$ of $T$ such that for all $i \in [m]$ and $j \in [(mk)^2]$, the graph~$G(T_{i,j})$ is $(t+2\ell)$-robust for $H_i$. In the first case, the first outcome of the proposition holds. So we may assume we are in the second case.

    We now prove a key claim.

    \begin{claim}
    There is an induced subgraph $\widehat{G}$ of $G$ whose vertex set is the union of pairwise disjoint sets $(Y_{i,j}: i \in [m], j \in [k])$ such that for all $i \in [m]$ and $j \in [k]$, the graph $\widehat{G}[Y_{i,j}]$ is $(t+2\ell)$-robust for $H_i$, and $\rho_{\widehat{G}}(Y_{i,j})\leq r(m^2+1)$.
    \end{claim}
    \begin{claimproof}
    First, let $\widehat{T}$ be the tree obtained from $T$ by contracting all of the edges in any of the subtrees $(T_{i,j}:i \in [m], j \in [(mk)^2])$. For each $i \in [m]$, let $R_i\subseteq V(\widehat{T})$ be the set of $(mk)^2$-many vertices corresponding to the $(mk)^2$-many subtrees whose first index is $i$. By \cref{lem:subsets-of-tree}, there exists a subtree $\widehat{T}'$ of $\widehat{T}$ and sets $R_1'$, $R_2'$, $\ldots$, $R_m'$ such that for every~$i \in [m]$, $R_i' \subseteq R_i \cap V(\widehat{T}')$, $\abs{R_i'}=k$, and each vertex in $R_i'$ has degree at most $m^2+1$ in $\widehat{T}'$.

    For each $i \in [m]$, there are $k$ indices $j \in [(mk)^2]$ so that $T_{i,j}$ was contracted down to a vertex in $R_i'$. By reordering the second indices, we may assume that the first $k$ subtrees $T_{i,1}$, $T_{i,2}$, $\ldots$, $T_{i,k}$ correspond to vertices in $R_i'$. Then, for each $i\in [m]$ and $j \in [k]$, we let $Y_{i,j}$ denote the vertex set of $G(T_{i,j})$. We write $\widehat{G}$ for the induced subgraph of $G$ whose vertex set is the union of all of these $Y_{i,j}$. It is clear that for any $i \in [m]$ and $j \in [k]$, the graph $\widehat{G}[Y_{i,j}]$ is $(t+2\ell)$-robust for $H_i$. So we just need to consider the cut-rank of $Y_{i,j}$ in $\widehat{G}$. Since each~$Y_{i,j}$ corresponds to a vertex of $\widehat{T}'$ with degree at most $m^2+1$, there exists a set $F_{i,j} \subseteq E(T)$ of size at most~$m^2+1$ so that $Y_{i,j}$ is contained in a different component of $T-F_{i,j}$ than any other $Y_{i',j'}$. It follows from the submodularity of cut-rank that $\rho_{\widehat{G}}(Y_{i,j})\leq r(m^2+1)$, as desired.
    \end{claimproof}

    Now, by locally complementing at vertices in $Y_{1,1}$, we can obtain a graph $G_{1,1}$ from~$\widehat{G}$ which has a set $X_{1,1} \subseteq Y_{1,1}$ of size at most $c$ which induces a subgraph that is $(t+\ell)$-robust for $H_1$. By \cref{lem:vm-perturbation}, for any $i' \in [m]$ and $j' \in [k]$, we have that $G_{1,1}[Y_{i',j'}]$ is an $\ell$-perturbation of~$\widehat{G}[Y_{i',j'}]$. 
    Thus, by \cref{lem:robustCommute}, the graph $G_{1,1}[Y_{i',j'}]$ is still $(t+\ell)$-robust for $H_{i'}$. Next, by locally complementing at vertices in $Y_{1,2}$, we can obtain a graph $G_{1,2}$ with a set of vertices $X_{1,2} \subseteq Y_{1,2}$ of size at most $c$ which induces a subgraph that is $(t+\ell)$-robust for~$H_1$. Again by \cref{lem:vm-perturbation}, for any $i' \in [m]$ and $j' \in [k]$, the graph $G_{1,2}[Y_{i',j'}]$ is an $\ell$-perturbation of~$\widehat{G}[Y_{i',j'}]$, and thus still $(t+\ell)$-robust for~$H_{i'}$.
    
    We continue this process until we obtain a graph $G_{m,k}$ with a collection $(X_{i,j}:i \in [m], j \in [k])$ of pairwise disjoint sets of vertices, each of size at most $c$. 
    (The order in which we consider the sets $(Y_{i,j}: i \in [m], j \in [k])$ does not really matter. 
    However, to clarify, if $i \in [m]$ and $j \in [k-1]$, then $G_{i,j+1}$ is obtained from $G_{i,j}$ by locally complementing within~$Y_{i,j+1}$ in order to obtain a set $X_{i,j+1} \subseteq Y_{i,j+1}$ which has size at most $c$ and induces a graph that is $(t+\ell)$-robust for $H_i$. 
    Similarly, if $i \in [m-1]$, then $G_{i+1, 1}$ is obtained from~$G_{i, k}$ by locally complementing within $Y_{i+1,1}$ in order to obtain a set $X_{i+1,1} \subseteq Y_{i+1,1}$ which has size at most~$c$ and induces a graph that is $(t+\ell)$-robust for $H_{i+1}$.)
    
    Let $\widetilde{G}$ denote the subgraph of $G_{m,k}$ induced on the union of all $X_{i,j}$. Note that for all $i \in [m]$ and $j \in [k]$, we have $\rho_{\widetilde{G}}(X_{i,j}) \leq r(m^2+1)$ since $G_{m,k}$ is locally equivalent to $\widehat{G}$ and each~$X_{i,j}$ is a subset of~$Y_{i,j}$.
    It remains to show that for all $i \in [m]$ and $j \in [k]$, the graph $\widetilde{G}[X_{i,j}]=G_{m,k}[X_{i,j}]$ is $t$-robust for $H_i$. 
    We know from construction that $G_{i,j}[X_{i,j}]$ is $(t+\ell)$\nobreakdash-robust for~$H_i$. 
    Consider the graph $G'$ obtained from $G_{i,j}$ by deleting the vertices in~$Y_{i,j}\setminus X_{i,j}$. Notice that $\rho_{G'}(X_{i,j})\leq \rho_{\widehat{G}}(Y_{i,j})\leq r(m^2+1)$. 
    Also notice that, since we never again locally complement at vertices in~$Y_{i,j}$, both $G_{i,j}[X_{i,j}]$ and $G_{m,k}[X_{i,j}]$ are vertex-minors of~$G'$. 
    So by \cref{lem:vm-perturbation}, the graph~$G_{m,k}[X_{i,j}]$ is an $\ell$-perturbation of~$G_{i,j}[X_{i,j}]$. Thus, by \cref{lem:robustCommute}, the graph~$G_{m,k}[X_{i,j}]$ is $t$-robust for $H_i$, as desired. This proves \cref{prop:chain}.
\end{proof}

\section{Extracting \texorpdfstring{$kH$}{kH} as a vertex-minor}
\label{sec:extract}

This section is dedicated to proving \cref{prop:cleaningChains}, which shows how to extract $kH$ from the outcome of~\cref{prop:chain}. 

We introduce some new terminology on ordered sets of vertices, helping us to apply Ramsey-type lemmas.
An \emph{ordered set} is a finite sequence $X=(x_1, x_2, \ldots, x_c)$ of distinct elements. 
For each $i\in [c]$, we write $X(i)$ for the $i$th element~$x_i$ in~$X$. 
Let $G$ be a graph.
A \emph{chain} is a sequence $\mathcal{X}=(X_1, X_2, \ldots, X_k)$ of pairwise disjoint ordered subsets of~$V(G)$. 
The \emph{length} of $\mathcal{X}$ is its number of sets $k$, and the \emph{width} of $\mathcal{X}$ is $\max_{i\in [k]}\abs{X_i}$. If $\mathcal{Y}$ is a subsequence of $\mathcal{X}$, then we say that $\mathcal{Y}$ is a \emph{subchain} of~$\mathcal{X}$. We write $V(\mathcal{X})$ for $\bigcup_{i\in[k]} X_i$. We say that a chain is \emph{$c$-uniform} if each $X_i$ has size exactly~$c$. 

Let $\mathcal{X}=(X_1, X_2, \ldots, X_k)$ be a $c$-uniform chain.
For each $j\in [c]$, we write $\mathcal{X}(j)$ for the set of all $j$th elements, that is, $\mathcal{X}(j)=\{X_1(j), X_2(j), \ldots, X_k(j)\}$.
Let $j_1, j_2\in [c]$ be integers (possibly with $j_1=j_2$). 
We say that the pair $(j_1, j_2)$ is \emph{fixed} with respect to~$\mathcal{X}$ and~$G$ if for any distinct $i_1, i_2 \in [k]$, there is no edge between $X_{i_1}(j_1)$ and $X_{i_2}(j_2)$ in $G$. That is, every edge between $\mathcal{X}(j_1)$ and $\mathcal{X}(j_2)$ has both ends in one of the parts $X_1$, $X_2$, $\ldots$, $X_k$. In this section, we show how to locally complement and take subchains in order to fix every pair $(j_1, j_2)$ with $j_1, j_2 \in [c]$.

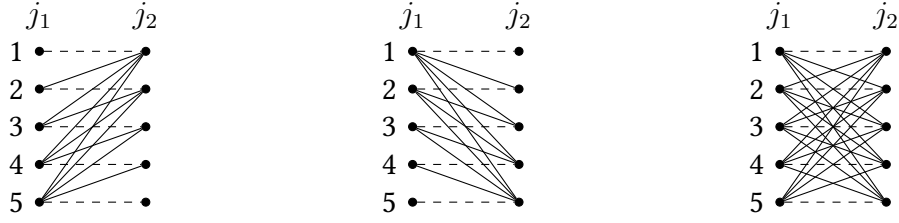
\begin{figure}
    \centering
    \begin{subfigure}{0.31\textwidth}
        \centering
        \begin{tikzpicture}
        \tikzstyle{v}=[circle, draw, solid, fill=black, inner sep=0pt, minimum width=3pt]
        \node [label=$j_1$] (v) at (-.7,2.5) {};
        \node [label=$j_2$] (w) at (.7,2.5){};
        \foreach \i in {1,2,3,4,5}{
            \node [v, label=left:\i] (v\i) at (-.7,3-\i*.5) {};
            \node [v] (w\i) at (.7,3-\i*.5) {};
            \draw[dashed] (v\i)--(w\i);
            \foreach \j in {1,2,3,4,5}{
                \ifthenelse{\j<\i}{\draw(v\i)--(w\j);}{}
            }
        }
    \end{tikzpicture}
    \caption{An up-coupled half graph}
    \end{subfigure}
   \begin{subfigure}{0.31\textwidth}
        \centering
        \begin{tikzpicture}
        \tikzstyle{v}=[circle, draw, solid, fill=black, inner sep=0pt, minimum width=3pt]
        \node [label=$j_1$] (v) at (-.7,2.5) {};
        \node [label=$j_2$] (w) at (.7,2.5){};
        \foreach \i in {1,2,3,4,5}{
            \node [v, label=left:\i] (v\i) at (-.7,3-\i*.5) {};
            \node [v] (w\i) at (.7,3-\i*.5) {};
            \draw[dashed] (v\i)--(w\i);
            \foreach \j in {1,2,3,4,5}{
                \ifthenelse{\j>\i}{\draw(v\i)--(w\j);}{}
            }
        }
    \end{tikzpicture}
    \caption{A down-coupled half graph}
    \end{subfigure}
     \begin{subfigure}{0.3\textwidth}
        \centering
        \begin{tikzpicture}
        \tikzstyle{v}=[circle, draw, solid, fill=black, inner sep=0pt, minimum width=3pt]
        \node [label=$j_1$] (v) at (-.7,2.5) {};
        \node [label=$j_2$] (w) at (.7,2.5){};
        \foreach \i in {1,2,3,4,5}{
            \node [v, label=left:\i] (v\i) at (-.7,3-\i*.5) {};
            \node [v] (w\i) at (.7,3-\i*.5) {};
            \draw[dashed] (v\i)--(w\i);
            \foreach \j in {1,2,3,4,5}{
                \ifthenelse{\j>\i}{\draw(v\i)--(w\j);}{}
            }
            \foreach \j in {1,2,3,4,5}{
                \ifthenelse{\j<\i}{\draw(v\i)--(w\j);}{}
            }
        }
    \end{tikzpicture}
    \caption{A complete couple}
    \end{subfigure}
    \caption{Coupled pairs $(j_1, j_2)$ with $j_1\neq j_2$. Two vertices in the same set $X_i\in \mathcal{X}$ may or may not be adjacent, and this is presented by a dashed line.}
    \label{fig:couple}
\end{figure}

We need to be more precise about the potential outcomes of applying Ramsey's Theorem. So, let $\mathcal{X}=(X_1, X_2, \ldots, X_k)$ be a $c$-uniform chain. Let $j_1, j_2\in [c]$ be distinct integers.
We say that $(j_1, j_2)$ is 
\begin{itemize}
    \item an \emph{up-coupled half graph} if for all distinct $i_1, i_2\in [k]$ with $i_1<i_2$, $X_{i_1}(j_1)$ is not adjacent to $X_{i_2}(j_2)$ and $X_{i_1}(j_2)$ is adjacent to $X_{i_2}(j_1)$,
    \item a \emph{down-coupled half graph} if for all distinct $i_1, i_2\in [k]$ with $i_1<i_2$, $X_{i_1}(j_1)$ is adjacent to $X_{i_2}(j_2)$ and $X_{i_1}(j_2)$ is not adjacent to $X_{i_2}(j_1)$, and
    \item a \emph{complete couple} if 
    for all distinct $i_1, i_2\in [k]$ with $i_1<i_2$, $X_{i_1}(j_1)$ is adjacent to $X_{i_2}(j_2)$ and $X_{i_1}(j_2)$ is adjacent to $X_{i_2}(j_1)$.
\end{itemize}
 See~\cref{fig:couple} for illustrations. If $(j_1, j_2)$ is fixed or one of the three types above, then we call $(j_1, j_2)$ \emph{coupled}.

Let $c$ be an integer. We fix pairs $(j_1, j_2)$ with $j_1, j_2\in [c]$ by considering them in lexicographic order. To be more precise, the \emph{lexicographic order} is the total order on $\{(j_1,j_2):1\leq j_1\leq j_2\leq c\}$ where pairs are ordered first by their first index and second by their second index. So all pairs $(1,j_2)$ come first, followed by the remaining pairs of the form $(2, j_2)$, and so on. Note that a pair $(j_1, j_2)$ is fixed with respect to some chain $\mathcal{X}$ and graph $G$ if and only if $(j_2, j_1)$ is fixed with respect to $\mathcal{X}$ and $G$. 

The next lemma shows how to fix pairs one at a time in lexicographic order. Given integers $s$ and $k$, we write $R_k(s)$ for the $k$-color Ramsey number of $s$; so every complete graph on at least $R_k(s)$ vertices whose edges are colored with at most $k$ colors contains a monochromatic complete subgraph on $s$ vertices.



    

\newcommand{\inside}{3k}
\begin{lemma}\label{lem:cleanupchain}
    Let $c$, $k$, and $\ell$ be integers with $0\le \ell < \binom{c}{2}+c$. Let $G$ be a graph, and let $\mathcal{X}=(X_1, X_2, \ldots, X_{R_4(\inside)})$ be a $c$-uniform chain of length $R_4(\inside)$ in $G$ such that the first~$\ell$ pairs in $\{(j_1,j_2):1\leq j_1\leq j_2\leq c\}$ with respect to the lexicographic order are fixed. Then 
    there exist a graph $\widetilde{G}$ and a subchain $\mathcal{Y}$ of $\mathcal{X}$ of length $k$ such that
    \begin{itemize}
        \item $\widetilde{G}$ is obtained from $G$ by applying local complementations at vertices in $V(\mathcal{X})\setminus V(\mathcal{Y})$, 
        \item the first $\ell+1$ pairs in the lexicographic order are fixed.
    \end{itemize}
\end{lemma}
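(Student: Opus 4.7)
The plan is to fix the $(\ell+1)$st pair $(j_1^*, j_2^*)$ (in lexicographic order) while preserving the fixedness of the first $\ell$ pairs. First I would apply the $4$-color Ramsey theorem to the unordered pairs $\{i_1,i_2\} \subseteq [R_4(3k)]$ with $i_1<i_2$: assign to each such pair one of four colors according to which of \emph{fixed}, \emph{complete couple}, \emph{up-coupled half graph}, or \emph{down-coupled half graph} describes the adjacency pattern induced on the four vertices $X_{i_1}(j_1^*), X_{i_1}(j_2^*), X_{i_2}(j_1^*), X_{i_2}(j_2^*)$ with respect to the pair $(j_1^*, j_2^*)$. (When $j_1^* = j_2^*$ only two colors are needed, and the argument only becomes easier.) Ramsey then yields a subchain $\mathcal{X}'$ of $\mathcal{X}$ of length $3k$ on which every such pair receives the same color, so that $\mathcal{X}'$ realizes exactly one of the four coupling types for $(j_1^*, j_2^*)$.

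If the common coupling type is already \emph{fixed}, I would just take $\mathcal{Y}$ to be any length-$k$ subchain of $\mathcal{X}'$ and set $\widetilde{G} = G$; fixedness is inherited by subchains, so all of the first $\ell+1$ pairs are fixed. Otherwise, I partition $\mathcal{X}'$ into $k$ consecutive triples, take $\mathcal{Y}$ to be the sequence of middle sets, and use the $2k$ remaining sets of $\mathcal{X}'$ as helpers at which we may locally complement. For each of the three remaining monochromatic patterns (complete couple, up-coupled, down-coupled), I would exhibit an explicit sequence of local complementations at helper vertices in positions $j_1^*$ and $j_2^*$ that destroys all edges between $\mathcal{Y}(j_1^*)$ and $\mathcal{Y}(j_2^*)$ across distinct sets of $\mathcal{Y}$. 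The half-graph patterns should be handled by locally complementing once at an appropriate helper vertex per triple (which flips the adjacencies between that helper's partner position and one ``side'' of $\mathcal{X}'$), while the complete couple reduces to the half-graph case after a single pivot on a helper edge between positions $j_1^*$ and $j_2^*$.

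The main obstacle will be the bookkeeping needed to verify that these local complementations do not destroy fixedness of any earlier pair $(j_1, j_2) <_{\mathrm{lex}} (j_1^*, j_2^*)$. Since a local complementation at $v$ only toggles edges among neighbors of $v$, and since $(j_1, j_2)$ being fixed tightly controls the adjacencies of helper vertices at positions $j_1^*, j_2^*$ to the vertices of $\mathcal{X}(j_1) \cup \mathcal{X}(j_2)$, one can arrange the helper operations so that any edge toggled in a previously fixed pair either has both endpoints inside a single $X_i$ (and therefore does not break fixedness) or is cancelled by a companion local complementation at the second helper of the same triple. This is where the factor of three in $R_4(3k)$ is used: one helper per triple performs the actual fixing of $(j_1^*, j_2^*)$, and the other is available to restore any earlier pair that the first operation disturbed.
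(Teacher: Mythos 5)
Your high-level strategy matches the paper's: apply the $4$-color Ramsey theorem to monochromatize the coupling type of $(j_1^*,j_2^*)$ on a subchain of length $3k$, treat the trivial (fixed) case by restriction, and in the non-trivial cases burn two thirds of the subchain as ``helpers'' while keeping the middle of each triple. The $j_1^*=j_2^*$ case you describe is also correct and is exactly what the paper does.

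There is, however, a real gap in the $j_1^*<j_2^*$ case: the operation you propose is the wrong one, and the repair you sketch is not verified. You say you would fix a half-graph pattern by ``locally complementing once at an appropriate helper vertex per triple.'' Try this on an up-coupled half graph: locally complementing at a helper $v$ in position $j_2^*$ toggles all edges among neighbors of $v$, and these neighbors include \emph{all} later $j_1^*$-position vertices. Since $(j_1^*,j_1^*)$ comes strictly earlier in lexicographic order, it is already fixed, so those vertices currently have no cross-part edges; a single local complementation at $v$ turns them into a clique, breaking $(j_1^*,j_1^*)$. This is precisely why the paper does not use single local complementations. Instead it pivots on an edge between a $j_1^*$th vertex and a $j_2^*$th vertex from different parts and then deletes both parts. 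Pivoting on $uv$ only toggles edges between vertices in \emph{different} classes among $N(u)\setminus N(v)$, $N(v)\setminus N(u)$, $N(u)\cap N(v)$; because $(j_1^*,j_1^*)$ is fixed, the surviving $j_1^*$-position vertices all sit in a single class, so no new edges appear among them. The paper proves a clean ``safety'' claim to this effect and then chains $k$ pivots through the triples. Your proposal acknowledges the bookkeeping as ``the main obstacle'' and offers a companion local complementation at the second helper as a patch, but you do not show that this cancels the damage; two local complementations at distinct vertices are not a pivot, and the interaction is genuinely delicate. Finally, your assertion that a complete couple ``reduces to the half-graph case after a single pivot'' is inaccurate: after one pivot on an inter-part edge between positions $j_1^*$ and $j_2^*$, the pair becomes \emph{fixed} outright (this is how the paper handles it), which is better but also means the reduction you describe is not what happens.
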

\begin{proof}
Let $(j_1,j_2)$ with $1\leq j_1\leq j_2 \leq c$ be the smallest pair (in lexicographic order) that is not fixed with respect to $\mathcal{X}$ and~$G$. 

First, we take care of the case that $j_1=j_2$. By Ramsey's Theorem and since $R_4(\inside)\geq R_2(k+1)$ when $k \geq 1$, there exists a subchain $\mathcal{Z}=(Z_1, Z_2, \ldots, Z_{k+1})$ of $\mathcal{X}$ of length~$k+1$ such that $\mathcal{Z}(j_1)$ is either a clique or an independent set.
We may assume it is a clique, since if it is an independent set we are already done.
Now, let $\widetilde{G}=G*Z_1(j_1)$ and $\mathcal{Y}=(Z_2,Z_3,\ldots,Z_{k+1})$.
From the definition of local complementation, we have that $(j_1, j_1)$ is fixed with respect to~$\mathcal{Y}$. 
So we just need to consider each pair $(j'_1, j'_2)$ with $1 \leq j'_1 \leq j'_2 \leq c$ which comes before $(j_1, j_1)$ in lexicographic order.
It follows that $j'_1 <j_1$, and that $(j'_1, j_1)$ is fixed with respect to $\mathcal{X}$.
Thus the vertex $Z_1(j_1)$ has no edges to $\mathcal{Y}(j'_1)$, and locally complementing at $Z_1(j_1)$ does not change the neighborhood of any vertex in $\mathcal{Y}(j'_1)$.
So $(j'_1, j'_2)$ is fixed in $\mathcal{Y}$, which completes this case. 

Finally, we take care of the case that $j_1 < j_2$.
By Ramsey's Theorem, there exists a subchain $\mathcal{Z}=(Z_1, Z_2, \ldots, Z_{\inside})$ of $\mathcal{X}$ of length $\inside$ such that $(j_1, j_2)$ is coupled with respect to~$\mathcal{Z}$.
From here on out, we only use one operation: pivot on an edge joining a $j_1$th vertex and a $j_2$th vertex from different parts of~$\mathcal{Z}$, and remove those two parts from the chain. 

First, we show that this operation is ``safe'' with respect to previously fixed pairs.
That is, we prove that for any subchain $\mathcal{Y}$ and graph $\widetilde{G}$ which are obtained from $\mathcal{Z}$ and $G$ (respectively) by performing a sequence of these operations, the first $\ell$ pairs $(j'_1, j'_2)$ in lexicographic order are still fixed in $\mathcal{Y}$ and $\widetilde{G}$.
So, consider a graph $\widetilde{G}$ which is obtained from $G$ by pivoting on an edge between a $j_1$th vertex $u$ and a $j_2$th vertex $v$ in different parts of $\mathcal{Z}$. Let $\mathcal{Y}$ denote the subchain of $\mathcal{Z}$ obtained by removing those two parts.

Let $(j'_1, j'_2)$ with $1 \leq j'_1 \leq j'_2 \leq c$ be a pair which comes strictly before $(j_1, j_2)$ in lexicographic order.
First, suppose that $j'_1<j_1$. 
Then both $(j'_1, j_1)$ and $(j'_1, j_2)$ are fixed with respect to~$\mathcal{Z}$ and~$G$. 
Thus $G$ has no edges joining $u$ or $v$ to any vertex in~$\mathcal{Y}(j'_1)$. 
It follows that every vertex in~$\mathcal{Y}(j'_1)$ has the same neighborhood in $\widetilde{G}$ as in~$G$. 
Thus $(j'_1, j'_2)$ is still fixed in $\mathcal{Y}$ and $\widetilde{G}$, as desired.
For the final case, suppose that $j'_1=j_1$.
Then, since $(j'_1, j'_2)$ comes before $(j_1, j_2)$ in lexicographic order, both $(j_1, j'_1)$ and $(j_1, j'_2)$ are fixed in $\mathcal{Z}$ and $G$.
Thus $G$ has no edge joining $u$ to any vertex in $\mathcal{Y}(j'_1)\cup \mathcal{Y}(j'_2)$. 
It follows that $\widetilde{G}$ and $G$ have the same induced subgraph on $\mathcal{Y}(j'_1)\cup \mathcal{Y}(j'_2)$.
So $(j'_1, j'_2)$ is fixed with respect to~$\mathcal{Y}$ and~$\widetilde{G}$, as we claimed.

To complete the proof of~\cref{lem:cleanupchain}, it now suffices to show that we can use this ``safe'' operation in order to fix $(j_1, j_2)$.
If $(j_1, j_2)$ is fixed with respect to~$\mathcal{Z}$ and~$G$, then we are already done. 
If $(j_1, j_2)$ is a complete couple in $\mathcal{Z}$ and $G$, then we only need to perform this operation once.
That is, we pivot on an edge between the $j_1$th vertex~$u$ and the $j_2$th vertex~$v$ in different parts of~$\mathcal{Z}$ and remove those two parts from $\mathcal{Z}$.
Let $\widetilde{G}$ denote the resulting graph and $\mathcal{Y}$ the resulting chain. 
Since $(j_1, j_1)$ is fixed in $G$, the set $\mathcal{Y}(j_1)$ is contained in~$N_G(v) \setminus N_G(u)$. 
Moreover, the set $\mathcal{Y}(j_2)$ is contained in~$N_G(u)$.
It follows that $(j_1, j_2)$ is fixed in $\mathcal{Y}$ and $\widetilde{G}$, as desired.

For the final two cases, suppose that $(j_1, j_2)$ is either an up-coupled half graph or a down-coupled half graph.
Since we are only interested in how pivoting on the ``half graph edges'' affects the pair $(j_1, j_2)$, these cases are symmetric.
(Imagine flipping the roles of $j_1$ and $j_2$.)
We write the proof for the case that $(j_1, j_2)$ is an up-coupled half graph in $\mathcal{Z}$ and $G$.
We let $\widetilde{G}$ denote the graph obtained from $G$ by first pivoting on the edge between $Z_1(j_2)$ and $Z_3(j_1)$, and then pivoting on the edge between $Z_4(j_2)$ and $Z_6(j_1)$, and then pivoting on the edge between $Z_7(j_2)$ and $Z_9(j_1)$ and so on.
Then the corresponding chain $\mathcal{Y}$ equals $(Z_2, Z_5, Z_8, \ldots, Z_{3k-1})$. 

\begin{figure}
    \centering
    \begin{subfigure}{0.31\textwidth}
        \centering
        \begin{tikzpicture}
        \pgfdeclarelayer{background}
        \pgfdeclarelayer{foreground}
        \pgfsetlayers{background,main,foreground}
        \tikzstyle{v}=[circle, draw, solid, fill=black, inner sep=0pt, minimum width=3pt]
        \tikzset{c1/.style={purple, line width=6pt,opacity=0.5,line cap=round,shorten >=-2pt, shorten <=-2pt}}
        
        \node [label=$j_1$] (v) at (-.7,4.5) {};
        \node [label=$j_2$] (w) at (.7,4.5){};
        \foreach \i in {2,4,5,6,7,8,9}{
            \node [v, label=left:\i] (v\i) at (-.7,5-\i*.5) {};
            \node [v] (w\i) at (.7,5-\i*.5) {};
            \draw[dashed] (v\i)--(w\i);
            \foreach \j in {4,5,6,7,8,9}{
                \ifthenelse{\j<\i}{\draw(v\i)--(w\j);}{}
            }
        }
        \begin{pgfonlayer}{background}
            \draw[c1] (v6)--(w4);
        \end{pgfonlayer}
    \end{tikzpicture}
    \caption{$\widetilde{G}_1$}
    \end{subfigure}
    \qquad
     \begin{subfigure}{0.31\textwidth}
        \centering
        \begin{tikzpicture}
        \tikzstyle{v}=[circle, draw, solid, fill=black, inner sep=0pt, minimum width=3pt]
        \tikzset{c1/.style={purple, line width=5pt,opacity=0.8,line cap=round}}
        
        \node [label=$j_1$] (v) at (-.7,4.5) {};
        \node [label=$j_2$] (w) at (.7,4.5){};
        \foreach \i in {2,5,7,8,9}{
            \node [v, label=left:\i] (v\i) at (-.7,5-\i*.5) {};
            \node [v] (w\i) at (.7,5-\i*.5) {};
            \draw[dashed] (v\i)--(w\i);
            \foreach \j in {7,8,9}{
                \ifthenelse{\j<\i}{\draw(v\i)--(w\j);}{}
            }
        }
    \end{tikzpicture}
    \caption{$\widetilde{G}_2$}
    \end{subfigure}
    \caption{The procedure of fixing a pair $(j_1, j_2)$ with $j_1<j_2$ that is an up-coupled half graph in~\cref{lem:cleanupchain}. The graph $\widetilde{G}_2$ is obtained from $\widetilde{G}_1$ by pivoting the edge joining~$Z_4(j_2)$ and~$Z_6(j_1)$, which is depicted as an edge marked in red. Note that  $\mathcal{Y}_1(j_1)$ is independent in $\widetilde{G}_1$, and thus, when pivoting, there is no change on $(Z_7(j_1)\cup Z_7(j_2))\cup \cdots \cup (Z_{3k}(j_1)\cup Z_{3k}(j_2))$, and the edges between $\{Z_5(j_2)\}$ and $\{Z_7(j_1), \ldots, Z_{3k}(j_1)\}$ are removed. }
    \label{fig:cleanchain}
\end{figure}
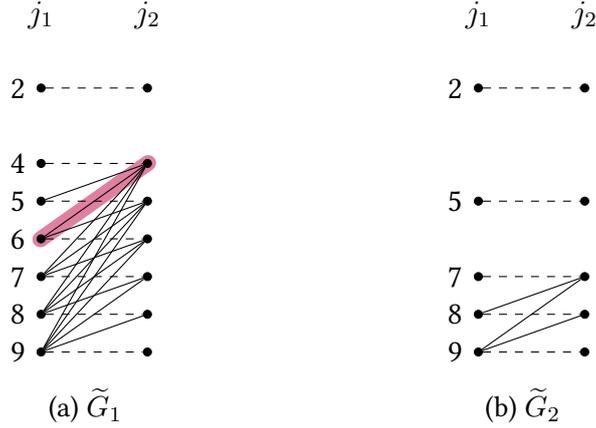

Let $\widetilde{G}_i$ denote the graph obtained by performing the first $i$ pivots described above (so $\widetilde{G}_0 = G$).
See~\cref{fig:cleanchain} for an illustration. 
Also, let $\mathcal{Y}_i$ denote the chain obtained from $\mathcal{Z}$ by removing the $2i$ parts involved in the first $i$ pivots (so $\mathcal{Y}_0 = \mathcal{Z}$). 
We claim that $(j_1, j_2)$ is an up-coupled half graph with respect to the chain $(Z_{3i+1}, Z_{3i+2}, Z_{3i+3}, \ldots, Z_{\inside})$ and graph~$\widetilde{G}_i$.
We also claim that $\widetilde{G}_i$ has no edge joining the $j_1$th or $j_2$th vertex of any of the first $i$ parts of $\mathcal{Y}_i$ (which are $Z_2, Z_5, \ldots, Z_{3i-1}$) to any $j_1$th or $j_2$th vertex in a different part of $\mathcal{Y}_i$. 
We prove these claims by induction on~$i$. 
The base case of $i=0$ trivially holds, and the final case of $i=k$ implies that $(j_1, j_2)$ is fixed with respect to $\mathcal{Y}$ and $\widetilde{G}$.

So, suppose that the claim holds for $i$.
Then the graph $\widetilde{G}_{i+1}$ is obtained from $\widetilde{G}_i$ by pivoting on the edge between $u \coloneqq Z_{3i+1}(j_2)$ and $v \coloneqq Z_{3i+3}(j_1)$.
Since neither $u$ nor $v$ has any edges to the $j_1$th or $j_2$th vertices of $Z_2, Z_5, \ldots, Z_{3i-1}$ by assumption, we only need to worry about what happens to the later parts $Z_{3i+1}, Z_{3i+2}, \ldots, Z_{\inside}$.
Note that, except for possibly the parts containing $u$ and $v$ which we are not concerned about, the $j_1$th vertices of $Z_{3i+1}, Z_{3i+2}, \ldots, Z_{\inside}$ are in $N_{\widetilde{G}_i}(u)\setminus N_{\widetilde{G}_i}(v)$.
Therefore, adjacency between one of these $j_1$th vertices and another vertex $x\notin \{u,v\}$ is complemented if and only if $x \in N_{\widetilde{G}_i}(v)$.
Since $(j_1, j_2)$ is an up-coupled half-graph with respect to $(Z_{3i+1}, Z_{3i+2}, Z_{3i+3}, \ldots, Z_{\inside})$, the only $j_2$th vertex (among the sets we care about) which is in $N_{\widetilde{G}_i}(v)$ is the vertex $Y_{3i+2}(j_2)$.

The claim follows by induction on $i$.
This also completes the proof of \cref{lem:cleanupchain}.
\end{proof}

Now we obtain the following proposition as a corollary of \cref{lem:cleanupchain}.

\begin{proposition}\label{prop:cleaningChains}
    For any positive integers $c$ and $k$, and any graph $H$ with components $H_1$, $H_2$, $\ldots$, $H_m$, there exists an integer $K = K_H(c,k)$ so that the following holds. Let $G$ be a graph whose vertex set is the union of pairwise disjoint sets $(X_{i,j}: i\in [m], j \in [K])$ such that for all $i \in [m]$ and $j \in [k]$, the set $X_{i,j}$ has size at most $c$, the graph $G[X_{i,j}]$ is $(2^{r(m^3+m)+1})$-robust for $H_i$, and $\rho_G(X_{i,j})\leq r(m^2+1)$. Then $G$ contains $kH$ as a vertex-minor.
\end{proposition}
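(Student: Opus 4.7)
The plan is to combine the $m$ per-column chains into a single uniform chain via pigeonhole, apply \cref{lem:cleanupchain} iteratively to fix every pair of positions in that chain, and then use the robustness and cut-rank hypotheses to extract $H$ from each surviving row.

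First, a pigeonhole. Since each $|X_{i,j}|\le c$, there are at most $c^m$ possible size-tuples $(|X_{1,j}|,\ldots,|X_{m,j}|)$, so some tuple $(c_1,\ldots,c_m)$ occurs for a set $J\subseteq [K]$ of size at least $K/c^m$. I pick arbitrary orderings of each $X_{i,j}$ for $j\in J$ and concatenate them to form ordered sets $Y_j$ of size $\sigma=\sum_i c_i\le mc$, giving a $\sigma$-uniform chain $\mathcal{X}^{\star}=(Y_j)_{j\in J}$. I then apply \cref{lem:cleanupchain} once for each of the $\binom{\sigma}{2}+\sigma$ pairs in lexicographic order, taking $K$ to be large enough (an iterated Ramsey tower of height $\binom{\sigma}{2}+\sigma$ above $k$, multiplied by $c^m$) that the final subchain $\mathcal{Y}^{\star}$ has length at least $k$. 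The resulting graph $G^{\star}$ is locally equivalent to $G$, every pair is fixed, and thus for distinct $j,j'\in\mathcal{Y}^{\star}$ there are no edges in $G^{\star}$ between $Y_j$ and $Y_{j'}$. So $G^{\star}[V(\mathcal{Y}^{\star})]=\bigsqcup_{j\in\mathcal{Y}^{\star}}G^{\star}[Y_j]$.

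It therefore suffices to show each $G^{\star}[Y_j]$ contains $H$ as a vertex-minor. For each $i$, because $G^{\star}[X_{i,j}]$ is a vertex-minor of $G$ on $X_{i,j}$ and $\rho_G(X_{i,j})\le r(m^2+1)$, \cref{lem:vm-perturbation} makes $G^{\star}[X_{i,j}]$ a $2^{r(m^2+1)+1}$-perturbation of $G[X_{i,j}]$; combined with the robustness hypothesis and \cref{lem:robustCommute}, this gives $H_i$ as a vertex-minor of $G^{\star}[X_{i,j}]$. Iterated application of \cref{lem:pertCutRank} then shows that the disjoint union $\bigsqcup_i G^{\star}[X_{i,j}]$ is a $2r(m^3+m)$-perturbation of $G^{\star}[Y_j]$, and this disjoint union clearly contains $H=\bigsqcup_i H_i$ as a vertex-minor.

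The main obstacle will be transferring the vertex-minor $H$ from that disconnected perturbation back to $G^{\star}[Y_j]$ itself, since vertex-minors do not pull back through perturbations in general. The plan to overcome this is to exploit that the robustness bound $2^{r(m^3+m)+1}$ is calibrated exactly to $\rho_G(Y_j)\le m\cdot r(m^2+1)=r(m^3+m)$: \cref{lem:vm-perturbation} applied to the whole set $Y_j$ certifies that every vertex-minor of $G$ on $Y_j$ is a $2^{r(m^3+m)+1}$-perturbation of $G^{\star}[Y_j]$, and the cumulative robustness of the pieces $G[X_{i,j}]$ (each comfortably exceeding the $2r(m^3+m)$ perturbation distance to the disconnected graph) then allows one to locate $H$ as a vertex-minor of $G^{\star}[Y_j]$ directly. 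Doing this for each $j\in\mathcal{Y}^{\star}$ and using the pairwise non-adjacency of the rows in $G^{\star}$ produces $kH$ as a vertex-minor of $G^{\star}$, and hence of $G$.
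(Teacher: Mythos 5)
Your set-up (pigeonhole, form a uniform chain, iterate \cref{lem:cleanupchain} to pairwise-disconnect the rows) matches the paper's, but the end-game has a real gap, and you have already pointed at it yourself: you aim to get a subchain of length $k$ and then extract all of $H=\bigsqcup_i H_i$ from each surviving row $Y_j$. The hypotheses do not supply this. The robustness assumption is piecewise: each $G[X_{i,j}]$ is robust for the single component $H_i$. That each $G^{\star}[X_{i,j}]$ contains $H_i$ as a vertex-minor is fine, and the disconnected graph $\bigsqcup_i G^{\star}[X_{i,j}]$ therefore contains $H$ as a vertex-minor; but this disconnected graph is only a perturbation of $G^{\star}[Y_j]$, and, as you note, vertex-minors do not pull back across perturbations. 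Your proposed repair via ``cumulative robustness of the pieces'' is not an argument: the hypothesis gives robustness of $G[X_{i,j}]$ for $H_i$, not robustness of $G[Y_j]$ (or of $\bigsqcup_i G^{\star}[X_{i,j}]$) for $H$, and these are different statements. Concretely, locally complementing inside $X_{1,j}$ to realize $H_1$ as an induced subgraph can alter the induced subgraph on $X_{2,j}$ whenever there are edges between the two pieces, so the $m$ extractions inside a single row can genuinely interfere with each other. No calibration of the robustness constant resolves this, because the problem is which graph is assumed robust, not how robust it is.

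The paper avoids this issue entirely with a different allocation. It runs the cleanup to obtain a subchain of length $km$, not $k$. Then each row $Y_j$ only needs to yield a single component: since $G[Y_j]$ inherits $(2^{r(m^3+m)+1})$-robustness for each $H_i$ from its subset $X_{i,j}$, and $\widetilde G[Y_j]$ is a $(2^{r(m^3+m)+1})$-perturbation of $G[Y_j]$ by \cref{lem:vm-perturbation}, \cref{lem:robustCommute} shows $\widetilde G[Y_j]$ contains every $H_i$ (individually) as a vertex-minor. Because the $km$ rows are pairwise non-adjacent after cleanup, one groups them into $k$ blocks of $m$ rows and extracts $H_i$ from the $i$-th row of each block; the disconnectedness guarantees the $km$ extractions never interfere. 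This is the missing combinatorial idea your proof needs: spread the $m$ components of each copy of $H$ over $m$ disjoint rows rather than attempting to realize a whole copy of $H$ inside one row.
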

\begin{proof}
    For each $j\in [K]$, there are at most $cm$ possibilities for $\sum_{i \in [m]} \abs{X_{i,j}}$.
    So by the pigeonhole principle, and thinking of $K$ as any sufficiently large integer, we may assume that this sum of sizes is the same for every $j \in [K]$. Now, for each $j\in [K]$, we set $X_j = \bigcup_{i \in [m]}X_{i,j}$, and we arbitrarily order $X_j$. Let $\mathcal{X}=(X_1, X_2, \ldots, X_K)$. Then $\mathcal{X}$ is a chain of length~$K$. 
    
    Note that $\mathcal{X}$ is $C$-uniform for some positive integer $C \leq cm$. Also note that for every $i\in [m]$ and $j\in [K]$, the graph $G[X_{j}]$ is $(2^{r(m^3+m)+1})$-robust for $H_i$ because $X_{i,j}\subseteq X_j$. 
    Moreover, for every $j\in [K]$, we have $\rho_{G}(X_{j})\le r(m^3+m)$ by the submodularity of the cut-rank function. By applying \cref{lem:cleanupchain} at most $\binom{C}{2}+C$ times to~$\mathcal X$, we obtain a graph~$\widetilde{G}$ that is locally equivalent to~$G$ 
    and a subchain $\mathcal{Y}=(Y_1,Y_2,\ldots,Y_{km})$ of $\mathcal{X}$ of length $km$ 
    such that every pair $(j_1, j_2)$ with $1\le j_1\le j_2\le C$ is fixed with respect to $\mathcal{Y}$ and $\widetilde{G}$.
    This means that $Y_j$ has no edges to $Y_{j'}$ for any distinct $j,j' \in [km]$.
    For each $j\in [km]$, the graph $\widetilde{G}[Y_j]$ is a $(2^{r(m^3+m)+1})$-perturbation of $G[Y_j]$ by \cref{lem:vm-perturbation}.
    Thus, by \cref{lem:robustCommute}, the graph~$\widetilde{G}[Y_j]$ is $0$-robust for $H_i$ for each $i \in [m]$.
    In particular, $\widetilde{G}[Y_j]$ has each of $H_1$, $H_2$, $\ldots$, $H_m$ as a vertex-minor.
    It follows that $\widetilde{G}$, and therefore $G$, contains $kH$ as a vertex-minor.
\end{proof}

\section{Proving the main theorem}
\label{sec:conclusion}
In this section, we prove \cref{thm:main} and its rough converse, \cref{prop:converse}. 
First, we prove our main theorem, which is restated below for convenience.

\maintheorem*
\begin{proof}
    First, suppose that $H$ has $m$ isolated vertices for some $m>0$. Let $H'$ be the graph obtained from $H$ by removing all isolated vertices. We may take $t(k,H)\coloneqq t(k+km,H')$.
    
    Therefore, we may assume that $H$ has no isolated vertices. Let $G$ be a graph; we may assume that $G$ has no vertex-minor isomorphic to $kH$. By~\cref{thm:grid}, there exists $r\coloneqq r(kH)$ such that every graph with no vertex-minor isomorphic to $kH$ has rank-width at most $r$. (Note that $kH$ is a circle graph since circle graphs are closed under taking disjoint unions.) Thus, the rank-width of~$G$ is at most~$r$.

    Let $H_1$, $H_2$, $\ldots$, $H_m$ be the components of $H$. For convenience, set $\ell =2^{r(m^3+m)+1}$. Let $c_H$ and $p_H$ be the functions given by \cref{prop:chain}, and let $K_H$ be the function given by \cref{prop:cleaningChains}. Set $c \coloneqq c_H(\ell,r)$, $K\coloneqq K_H(c,k)$, and $p \coloneqq p_H(K,\ell, r)$. By \cref{prop:chain}, at least one of the following holds. 
     \begin{enumerate}
    \item There exists $i \in [m]$ such that there is a $p$-perturbation of~$G$ which has no vertex-minor isomorphic to $H_i$.
    \item There exist 
    a vertex-minor $\widetilde{G}$ of~$G$ whose vertex set is the union of pairwise disjoint sets $(X_{i,j}: i\in [m], j \in [K])$ such that for all $i\in [m]$ and $j\in [K]$, we have $\abs{X_{i,j}}\le c$, the graph $\widetilde{G}[X_{i,j}]$ is $\ell$-robust for $H_i$, 
    and $\rho_{\widetilde{G}}(X_{i,j})\le r(m^2+1)$. 
    \end{enumerate}
    In the first case, we have found the desired perturbation and are done. In the second case, $G$ contains $kH$ as a vertex-minor by \cref{prop:cleaningChains}, and again we are done.
\end{proof}

Now we prove the converse, \cref{prop:converse}, which we restate below for convenience.

\mainProp*
\begin{proof}
        We proceed by induction on $t$. The base case holds since when $t=0$, the graphs~$G$ and~$\widetilde{G}$ are locally equivalent. Now suppose that it holds for~$t-1$.
    
    Suppose towards a contradiction that $G$ has a vertex-minor isomorphic to $(t+1)H$. Let~$\widehat{G}$ be a graph which contains both $G$ and $\widetilde{G}$ as vertex-minors and has $\abs{V(G)}+t$ vertices. By locally complementing in $\widehat{G}$, we may assume that $(t+1)H$ is an induced subgraph of $\widehat{G}$ on a set of vertices contained in $V(G)$. By \cref{lem:bouchet3Ways}, for each vertex $v$ of $\widehat{G}$ not in $\widetilde{G}$, the graph $\widetilde{G}$ is a vertex-minor of at least one of the three graphs $\widehat{G}-v$, $\widehat{G}*v-v$, and~$\widehat{G}/v$.

    Thus $\widetilde{G}$ is a $(t-1)$-perturbation of at least one of the three graphs $(\widehat{G}-v)[V(G)]$, $(\widehat{G}*v-v)[V(G)]$, and~$(\widehat{G}/v)[V(G)]$. So, to obtain a contradiction, it suffices to show that each of these three graphs contains $tH$ as a vertex-minor. Certainly $(t+1)H$ is still an induced subgraph of $\widehat{G}-v$. Moreover, if $v$ has no neighbors in $\widehat{G}$ which are in the copy of~$(t+1)H$, then $(t+1)H$ is still an induced subgraph of both $\widehat{G}*v-v$ and $\widehat{G}/v$.
    
    So we may assume that in $\widehat{G}$, the vertex $v$ is adjacent to a vertex $u$ which is in the copy of $(t+1)H$. By complementing at $u$ and removing the copy of $H$ which contains $u$, we can see that $(\widehat{G}*v-v)[V(G)]$ contains $tH$ as a vertex-minor. By pivoting on $uv$ and removing the copy of $H$ that contains $u$, we can see that $(\widehat{G}/v)[V(G)]$ contains $tH$ as a vertex-minor as well. This completes the proof.
\end{proof}

\section{Discussion on perturbations}
\label{sec:discussion}

In this section, we discuss alternate ways of defining perturbations.
Additionally, we prove that we do not get an Erd\H{o}s-P\'{o}sa-property for vertex deletions and local complementations (\cref{lem:pertNecessary}), thus perturbations are indeed necessary.

Given a graph $G$, let us write $A(G)$ to denote the adjacency matrix of $G$ over the binary field.
A graph $G_1$ is a \emph{rank-$t$ perturbation} of a graph~$G_2$ if the adjacency matrix of~$G_1$ can be obtained from the adjacency matrix of $G_2$ by adding (over the binary field) a matrix of rank at most $t$ and then changing all diagonal entries to~$0$. 

\begin{lemma}\label{lem:rank}
    Every rank-$t$ perturbation of a graph~$G$ is a $t$-perturbation of $G$.
\end{lemma}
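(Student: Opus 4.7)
Suppose $G_1$ is a rank-$t$ perturbation of $G$, so $A(G_1)_{uv}=A(G)_{uv}+M_{uv}$ for all distinct $u,v$ for some matrix $M$ over $\mathbb{F}_2$ of rank at most $t$. The plan is to construct an auxiliary graph $\widehat{G}$ on $V(G)\cup\{w_1,\ldots,w_t\}$ in which both $G$ and $G_1$ are vertex-minors. Since $A(G_1)-A(G)$ is symmetric with zero diagonal, the off-diagonal of $M$ is forced to be symmetric while the diagonal of $M$ is free; accordingly, I would replace $M$ by a symmetric matrix of minimum rank $t^{\star}\le t$ compatible with the prescribed off-diagonal, and note that realizing $G_1$ as a $t^{\star}$-perturbation suffices, since padding the auxiliary graph with $t-t^{\star}$ isolated vertices upgrades a $t^{\star}$-perturbation to a $t$-perturbation.

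The algebraic core is a Gram-style factorization $M=\sum_{i=1}^{t^{\star}}\mathbf{u}_i\mathbf{u}_i^T$ over $\mathbb{F}_2$ with $\mathbf{u}_i\in\mathbb{F}_2^{V(G)}$. Such a factorization exists for a symmetric matrix of rank $t^{\star}$ over $\mathbb{F}_2$ precisely when its diagonal lies in its own column span, a condition that one shows holds for the minimum-rank representative; the $\mathbf{u}_i$ are then the columns of a matrix $U$ with $M=UU^T$. Given the decomposition, I would define $\widehat{G}$ on $V(G)\cup\{w_1,\ldots,w_{t^{\star}}\}$ by taking $\widehat{G}[V(G)]=G$, keeping the $w_i$ pairwise non-adjacent, and connecting $w_i$ to $v\in V(G)$ iff $v\in\operatorname{supp}(\mathbf{u}_i)$. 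Then $G=\widehat{G}-\{w_1,\ldots,w_{t^{\star}}\}$ so $G$ is a vertex-minor of $\widehat{G}$. To recover $G_1$, iteratively apply $*w_i-w_i$ (local complementation at $w_i$ followed by deletion of $w_i$): since the $w_i$ are pairwise non-adjacent, each local complementation at $w_i$ toggles only the edges inside $\operatorname{supp}(\mathbf{u}_i)\subseteq V(G)$ and leaves the remaining $w_j$ untouched, so the operations commute and their cumulative effect on the $V(G)$-adjacency is the off-diagonal of $\sum_i\mathbf{u}_i\mathbf{u}_i^T=M$, producing exactly $A(G_1)$.

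The hardest step is justifying the Gram decomposition with exactly $t^{\star}$ summands. Alternating matrices over $\mathbb{F}_2$ have even rank yet can fail to split as a sum of exactly $\operatorname{rank}(M)$ rank-one symmetric summands (for example, the $2\times 2$ alternating matrix of rank $2$ requires three such summands, not two). This obstruction is resolved in two ways: whenever a diagonal adjustment can strictly decrease the rank of the representative one uses the lower-rank non-alternating matrix instead; otherwise, one replaces a single local complementation at an auxiliary vertex by a pivot on an edge between two auxiliary vertices as in the proof of \cref{lem:pertCutRank}, which realises a rank-$2$ alternating change using exactly two auxiliary vertices and absorbs the parity issue within the overall budget of $t^{\star}$ auxiliary vertices.
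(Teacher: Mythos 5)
Your proposal takes essentially the same approach as the paper's proof: decompose the symmetric matrix $M$ with $A(G_1)=A(G)+M+D$ into rank-one symmetric pieces (each realized by one new vertex plus a local complementation) and rank-two alternating pieces (each realized by two new vertices plus a pivot, as in \cref{lem:pertCutRank}). The paper obtains this decomposition directly from a lemma of Godsil and Royle, which states that every symmetric matrix of rank $r$ over any field is a sum of $r-2q$ rank-one symmetric matrices and $q$ rank-two symmetric matrices; this keeps the total number of auxiliary vertices at $r\le t$ by design. Two spots in your write-up want tightening. First, the assertion that a minimum-rank diagonal representative always has its diagonal in its own column span is stated without proof and is not actually needed, since your pivot fallback already handles alternating blocks regardless of whether a Gram factorization exists. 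Second, the phrase that replacing a local complementation by a pivot ``absorbs the parity issue within the budget'' is exactly where the bookkeeping lives: you need that each rank-two alternating summand contributes $2$ to the rank of $M$ while costing $2$ auxiliary vertices, so the total stays within $t$, and that is precisely what the Godsil--Royle decomposition gives you for free. Dropping the rank-minimization-over-diagonals step (the lemma only asks for a $t$-perturbation, not an optimal one) and citing that decomposition directly would land you on the paper's argument.
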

\begin{proof}
    Let $G'$ be a rank-$t$ perturbation of~$G$. Then there exist a matrix $M$ over the binary field and a diagonal matrix $D$ over the binary field such that $A(G') = A(G) + M + D$, where the rank of $M$ is at most $t$.
    Since adjacency matrices are symmetric, $M$ is symmetric.
    It is well known that a symmetric matrix of rank $r$ over any field can be written as a sum of~$r-2q$ rank-$1$ symmetric matrices and $q$ rank-$2$ symmetric matrices for some integer $q$; see Godsil and Royle~\cite[Lemma 8.9.3]{GR2001}. Now we follow an analysis of rank-$1$ and rank-$2$ matrices over the binary field in the proof of~\cite[Theorem 1.1]{NguyenOum20}.
    \begin{itemize}
        \item Every symmetric matrix of rank $1$ over the binary field is of the form 
        \[ 
        \begin{pmatrix}
            1&0\\
            0&0
        \end{pmatrix}, 
        \]
        where $0$ represents an all-$0$ matrix and $1$ represents an all-$1$ matrix. 
        \item Every symmetric matrix of rank $2$ over the binary field is of the form 
        \[ 
        \begin{pmatrix}
            0&1&0\\
            1&0&0\\ 
            0&0&0
        \end{pmatrix} \text{ or } 
        \begin{pmatrix}
        0&1&1&0\\
        1&0&1&0\\
        1&1&0&0\\
        0&0&0&0
        \end{pmatrix}.
        \]
    \end{itemize}

    Let $r$ be the rank of $M$. Then for some integer $q$, there exist $r-2q$ rank-$1$ symmetric matrices $M_1$, $M_2$, $\ldots$, $M_{r-2q}$ and $q$ rank-$2$ symmetric matrices $N_1$, $N_2$, $\ldots$, $N_q$ such that $M=M_1+M_2+\cdots+M_{r-2q}+N_1+N_2+\cdots+N_q$. For each $i\in [r-2q]$, there exists a maximal $X_i\subseteq V(G)$ such that the $X_i\times X_i$ submatrix of $M_i$ is an all-$1$ matrix.
    Likewise, for each $j\in [q]$, there exist $Y_j,Z_j\subseteq V(G)$ such that 
    \[ 
    N_j=
    \bordermatrix{
        & Y_j\setminus Z_j & Y_j\cap Z_j & Z_j\setminus Y_j & V(G)\setminus (Y_j\cup Z_j) \cr
        Y_j\setminus Z_j& 0& 1 & 1&0\cr
        Y_j\cap Z_j & 1&0&1&0\cr
        Z_j\setminus Y_j & 1 &1&0&0\cr
        V(G)\setminus(Y_j\cup Z_j) &0&0&0&0
    }.
    \]

    Now we explain how to obtain $G'$ from $G$ by an  $r$-perturbation.
    To do so, we obtain a new graph $\widehat{G}$ from $G$ by adding $r$ new vertices as follows. 
    For each $i\in [r-2q]$, we add a vertex $x_i$ adjacent to every vertex in $X_i$.
    For each $j\in [q]$, we add two vertices $y_j$, $z_j$ so that~$y_j$ is adjacent to every vertex in $Y_j\cup \{z_j\}$ 
    and 
    $z_j$ is adjacent to every vertex in $Z_j\cup \{y_j\}$. 
    Now $G'$ is obtained from $\widehat{G}$ by applying local complementations at $x_1$, $x_2$, $\ldots$, $x_{r-2q}$ and applying pivots on $y_1z_1$, $y_2z_2$, $\ldots$, $y_qz_q$.
\end{proof}

The converse of \cref{lem:rank} is false in general, but the following rough converse holds.

\begin{lemma}\label{lem:rankConv}
    Any $t$-perturbation of a graph~$G$ is locally equivalent to a rank-$2t$ perturbation of~$G$.
\end{lemma}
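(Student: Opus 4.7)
The plan is to proceed by induction on $t$, combining \cref{lem:bouchet3Ways} with a direct rank analysis of each of the three elementary ``remove $v$'' operations on the adjacency matrix. The base case $t=0$ is immediate: a $0$-perturbation of $G$ is, by definition, locally equivalent to $G$, hence trivially a rank-$0$ perturbation.

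For the inductive step, let $G'$ be a $t$-perturbation of $G$ witnessed by a graph $\widehat{G}$ on $V(G) \cup S$ with $\abs{S} = t$. Since locally complementing $\widehat{G}$ does not alter its vertex-minors, we may assume $G = \widehat{G}[V(G)]$. Pick any $v \in S$. As $v \notin V(G')$, applying \cref{lem:bouchet3Ways} to the vertex-minor $G'$ of $\widehat{G}$ shows that $G'$ is a vertex-minor of at least one of $\widehat{G}-v$, $\widehat{G}*v-v$, and $\widehat{G}/v$. Let $\widehat{G}_1$ be the chosen graph and set $G_1 = \widehat{G}_1[V(G)]$; then $\widehat{G}_1$ has $\abs{V(G)}+(t-1)$ vertices and witnesses that $G'$ is a $(t-1)$-perturbation of $G_1$.

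The central claim is that $G_1$ is always a rank-at-most-$2$ perturbation of $G$. The case $\widehat{G}_1 = \widehat{G}-v$ gives $G_1 = G$, a rank-$0$ change. For $\widehat{G}_1 = \widehat{G}*v-v$, local complementation at $v$ modifies $A_G$ by the symmetric rank-one matrix $b_v b_v^\top$ (with diagonal absorbed into the diagonal correction), where $b_v \in \mathbb{F}_2^{V(G)}$ is the $V(G)$-restriction of $v$'s neighborhood in $\widehat{G}$. For $\widehat{G}_1 = \widehat{G}/v = \widehat{G}\pivot uv - v$ with $u$ a neighbor of $v$, one uses the identity $\widehat{G}\pivot uv = \widehat{G}*u*v*u$ recalled in the preliminaries and tracks the three column updates explicitly. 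Writing $a_u, a_v$ for the $u$- and $v$-columns of $A(\widehat{G})$ and $e_u, e_v$ for the corresponding standard basis vectors, the resulting formula
\[
A(\widehat{G}\pivot uv) = A(\widehat{G}) + (a_u+e_u)(a_v+e_v)^\top + (a_v+e_v)(a_u+e_u)^\top
\]
exhibits the pivot change as a symmetric matrix of rank at most $2$ with zero diagonal. Restricting to $V(G) \times V(G)$ preserves this rank bound, and the subsequent deletion of $v \in S$ does not further affect the $V(G)$-submatrix.

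Applying the inductive hypothesis to $G'$ as a $(t-1)$-perturbation of $G_1$ yields a graph $G''$ locally equivalent to $G'$ whose adjacency matrix differs from $A_{G_1}$ by a rank-at-most-$2(t-1)$ matrix plus a diagonal correction. Composing with the rank-at-most-$2$ perturbation from $G$ to $G_1$, and using subadditivity of rank together with the fact that diagonal corrections absorb into each other, we conclude that $G''$ differs from $A_G$ by a matrix of rank at most $2t$ plus a diagonal, so $G'$ is locally equivalent to a rank-at-most-$2t$ perturbation of $G$. The main technical obstacle is the pivot rank bound in the third case; the direct calculation of $A(\widehat{G}*u*v*u)$ sketched above, which is essentially a principal-pivot-transform computation, is the cleanest route to the required rank-$2$ decomposition.
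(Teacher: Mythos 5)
Your proof is correct and follows essentially the same strategy as the paper's: induction on $t$, applying \cref{lem:bouchet3Ways} to remove one extra vertex of $\widehat{G}$ in one of three ways, and showing that each option changes the adjacency matrix restricted to $V(G)\times V(G)$ by a matrix of rank at most $2$ plus a diagonal. The only cosmetic differences are that the paper selects the pivot partner $w$ inside $V(G)$ (dispatching the ``$v$ has no neighbor in $V(G)$'' case separately, which gives the slightly sharper bound $2t-1$ in the local-complementation branch), and expresses the pivot perturbation via an explicit block matrix rather than your outer-product identity $(a_u+e_u)(a_v+e_v)^\top+(a_v+e_v)(a_u+e_u)^\top$; both routes yield the same rank-$2$ bound.
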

\begin{proof}
    We proceed by induction on~$t$. It is trivial if $t=0$. So we may assume that $t>0$. Let $G'$ be a $t$-perturbation of $G$.
    Then there is a graph~$\widehat{G}$ on $\abs{V(G)}+t$ vertices such that both $G'$ and $G$ are vertex-minors of~$\widehat{G}$.
    By applying local complementations in $\widehat{G}$, 
    we may assume that $G$ is an induced subgraph of $\widehat{G}$.
    Let $v$ be a vertex in $V(\widehat{G})\setminus V(G)$.
    By~\cref{lem:bouchet3Ways}, at least one of $\widehat{G}-v$, $\widehat{G}*v-v$, and $\widehat{G}/v$ contains $G'$ as a vertex-minor. 

    If $\widehat{G}-v$ contains $G'$ as a vertex-minor, then $G'$ is a $(t-1)$-perturbation of~$G$, implying our conclusion by the induction hypothesis. 
    
    If $\widehat{G}*v-v$ contains $G'$ as a vertex-minor, then  
    let $G_1=(\widehat{G}*v)[V(G)]$. Then $G'$  is a $(t-1)$-perturbation of $G_1$ and by the induction hypothesis, $G'$ is a rank-$(2(t-1))$ perturbation of $G_1$.
    Note that $G_1$ is a rank-$1$ perturbation of $G$, and therefore $G'$ is a rank\nobreakdash-$(2t-1)$ perturbation of $G$.

    It remains to consider the case that $\widehat{G}/v$ contains $G'$ as a vertex-minor.
    We may assume that $v$ has a neighbor~$w\in V(G)$ in $\widehat{G}$, because otherwise 
    $\widehat{G}/v$ contains $G$ as an induced subgraph, implying that 
    $G'$ is a $(t-1)$-perturbation of~$G$.
    Let $G_2=(\widehat{G}\pivot vw)[V(G)]$.
    Observe that $G'$ is a $(t-1)$-perturbation of~$G_2$
    and therefore by the induction hypothesis, $G'$ is a rank\nobreakdash-$(2(t-1))$ perturbation of~$G_2$.
    Observe that
    for $S \coloneqq N_{\widehat G}(v)\cap V(G)$ and $T \coloneqq N_{G}(w)\cup \{w\}$, 
    \[ 
    A(G_2)-A(G) = \bordermatrix{
       & T\setminus S & S\cap T  & S\setminus T & V(G)\setminus (S\cup T) \cr   
       T\setminus S 
       & 0 & 1 & 1 & 0\cr  
       S\cap T
       & 1 & 0 & 1 & 0 \cr 
       S\setminus T  & 0 & 1 & 1 & 0 \cr 
       V(G)\setminus (S\cup T)
        & 0 & 0& 0& 0 
    }, 
    \] 
    and therefore  $G_2$ is a rank-$2$ perturbation of~$G$.
    Thus, $G'$ is a rank-$2t$ perturbation of~$G$.
\end{proof}



One may ask whether we can replace the operation of ``taking a $t$-perturbation'' with the operation of ``taking a $0$-perturbation and removing $t$ vertices'' in \cref{thm:main}. In other words, 
is it true that for any circle graph $H$ and integer $k$, there exists an integer $t = t(k, H)$ so that for every graph $G$, either $G$ has a vertex-minor isomorphic to $kH$, or there exist a graph $\widetilde{G}$ locally equivalent to $G$ and a set $X$ of at most $t$ vertices in $G$ such that $\widetilde{G}-X$ has no vertex-minor isomorphic to~$H$?
We show that this variant does not hold for some small graph $H$.  Let $P_n$ denote the path graph on $n$ vertices.

\begin{lemma}
\label{lem:pertNecessary}
    For every non-negative integer $t$, there exists a graph $G_t$ such that $G_t$ has no vertex-minor isomorphic to $2P_4$, but for every $0$-perturbation $\widetilde{G_t}$ of $G_t$ and set $X$ of at most~$t$ vertices in $\widetilde{G_t}$, the graph $\widetilde{G_t}-X$ has a vertex-minor isomorphic to $P_4$. 
\end{lemma}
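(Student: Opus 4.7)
The plan is to exhibit, for each $t \ge 0$, an explicit graph $G_t$ and verify the two stated properties. For $t = 0$, the graph $G_0 = P_4$ works: it has $P_4$ as a vertex-minor (itself) and, having only four vertices, cannot have the eight-vertex graph $2P_4$ as a vertex-minor.

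For $t \ge 1$ the plan is to take $G_t$ to be a suitably large bipartite graph (for example, a complete bipartite graph $K_{m,n}$ with $m,n$ growing with $t$, possibly after a thickening operation replicating vertices into twin classes). For the ``no $2P_4$ vertex-minor'' property, since both $G_t$ and $2P_4$ are bipartite, I would use the correspondence discussed in \cref{sec:pivotminor} between pivot-minors of bipartite graphs and minors of the associated binary matroid, observing that for bipartite target and host the containment as vertex-minor reduces to containment as pivot-minor. This reduces the problem to a matroid-minor statement about $M(G_t)$, which can be attacked by exploiting the matroid-connectedness of $M(G_t)$ versus the direct-sum decomposition $M(2P_4) = M(P_4) \oplus M(P_4)$. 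For the robustness property, I would perform a case analysis over the local equivalents of $G_t$: for bipartite $G_t$ these include $G_t$ itself, its local complementations at single vertices (yielding ``complete graph minus an anticlique'' structures), and its pivots on edges (yielding double-star or book-type graphs), and confirm that removing any at most $t$ vertices from any such local equivalent leaves $P_4$ as a vertex-minor (possibly after one further local complementation, e.g.\ to surface an induced $P_4$ or paw inside the remaining graph).

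The main obstacle is the fragility of the double-star local equivalent: pivoting an edge of a complete bipartite graph $K_{m,n}$ produces a double star whose two central vertices, once removed, destroy every induced $P_4$ and every paw. Consequently a single $K_{m,n}$ as $G_t$ only suffices for $t \le 1$, and for larger $t$ one needs a construction (for instance by replicating each of the two ``fragile'' vertices into a class of $t+1$ twins, or by a more intricate iterated construction) providing enough built-in redundancy that any $t$ vertex deletions in any local equivalent still leave an intact $P_4$-witness. Verifying that this extra redundancy does not inadvertently produce $2P_4$ as a vertex-minor is the technically hardest step, and that is where the matroid-minor argument for the first property has to be re-examined in the thickened setting.
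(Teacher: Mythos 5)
Your $t=0$ case with $G_0 = P_4$ is correct, but for $t \ge 1$ the plan has gaps in both directions, and the candidate construction fails sooner than you estimate. For excluding $2P_4$, you propose reducing vertex-minor containment to pivot-minor containment ``since both $G_t$ and $2P_4$ are bipartite,'' but no such reduction is available: pivot-minor containment is strictly stronger than vertex-minor containment. Indeed, \cref{lem:bouchet3Ways} offers three ways to eliminate a vertex when chasing a vertex-minor ($G-v$, $G*v-v$, $G/v$), whereas for pivot-minors only $G-v$ and $G/v$ are permitted (\cref{lem:bouchetPivot}); the branch $G*v-v$ generally leaves the bipartite world entirely (locally complementing at a vertex of degree $\ge 3$ in a bipartite graph creates triangles). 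So controlling minors of $M(G_t)$ would only control pivot-minors of $G_t$, not its vertex-minors, and excluding $2P_4$ genuinely requires describing the whole local-equivalence class of $G_t$.

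For the robustness direction, the fragility of the double star is worse than you state: pivoting on an edge of $K_{m,n}$ yields a double star with two centers, and deleting even a \emph{single} center leaves a star plus isolated vertices, a cograph with no $P_4$ vertex-minor. So $K_{m,n}$ already fails at $t=1$, not at $t=2$. The thickening repair also cannot be carried out inside $K_{m,n}$: every pair of same-side vertices of $K_{m,n}$ is already a pair of false twins, so replicating vertices into twin classes only produces a larger complete bipartite graph, and the same pivot exhibits the same failure. The paper's construction takes $G_t = K_{t+2,\dots,t+2}$, the complete \emph{multipartite} graph with $t+3$ parts (in particular not bipartite for $t\ge 0$), and hinges on a closed description of the local-equivalence class of any complete multipartite graph: two explicit families $\mathcal{C}(\mathcal{A},\mathcal{B})$ and $\mathcal{D}(S,\mathcal{A},\mathcal{B})$ that are shown to be closed under local complementation and to contain no induced $2P_4$. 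Having $t+3$ parts each of size $t+2$ then guarantees that any $t$ deletions leave two untouched blocks which together produce a $P_4$ vertex-minor. Your sketch supplies neither the closed characterization of the local-equivalence class nor a construction providing this multi-part redundancy, so both properties remain unproved for $t\ge 1$.
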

\begin{proof}
    We show that $G_t$ can be taken as a complete multipartite graph. To see this, we now describe the graphs that are locally equivalent to complete multipartite graphs. Intuitively, when we locally complement, each part of the complete multipartite graph becomes either an independent set, a clique, or a star whose low-degree vertices are pendant in $G$ (that is, have degree one in $G$). The ``quotient'' graph is either a clique or a star. 

    For two finite families $\mathcal{A}$ and $\mathcal{B}$ of sets  (possibly with $\mathcal{A}=\emptyset$ or $\mathcal{B}=\emptyset$) where all sets in~$\mathcal{A}\cup \mathcal{B}$ are pairwise disjoint, 
    let $\mathcal{C}(\mathcal{A}, \mathcal{B})$ be a family of graphs $G$ on $\bigcup_{X\in \mathcal{A} \cup \mathcal{B}}X$ such that 
    \begin{itemize}\setlength\itemsep{0em}
        \item every $A\in \mathcal{A}$ is independent in $G$, 
        \item every $B\in \mathcal{B}$ induces a star graph with center, say $c_B\in B$, such that every vertex in $B\setminus \{c_B\}$ has degree one in $G$, and
        \item any two vertices in distinct members of $\mathcal{A}\cup \{\{c_B\}:B\in \mathcal{B}\}$ are adjacent.
    \end{itemize}
    Note that if $\mathcal{B}=\emptyset$, then $G$ is a complete multipartite graph.
    
    We need one more construction. For a set $S$ and two finite families $\mathcal{A}$ and $\mathcal{B}$ of sets (possibly with $\mathcal{A}=\emptyset$ or $\mathcal{B}=\emptyset$) where all sets in $\mathcal{A}\cup \mathcal{B}\cup \{S\}$ are pairwise disjoint,   let $\mathcal{D}(S, \mathcal{A}, \mathcal{B})$ be the family of graphs $G$ on $S\cup (\bigcup_{X\in \mathcal{A} \cup \mathcal{B}}X)$ such that 
    \begin{itemize}  \setlength\itemsep{0em}
        \item $S$ is a clique or an independent set  in $G$,
        \item every $A\in \mathcal{A}$ is a clique in $G$, 
        \item every $B\in \mathcal{B}$ induces a star graph with center, say $c_B\in B$, such that every vertex in $B\setminus \{c_B\}$ has degree one in $G$, and
        \item there are no edges between any two distinct sets in $\mathcal{A}\cup \{\{c_B\}:B\in \mathcal{B}\}$, and 
        \item each vertex in $S$ is adjacent to all vertices in any member of $ \mathcal{A}\cup \{\{c_B\}:B\in \mathcal{B}\}$.
    \end{itemize}
    We claim that the family of all possible graphs of these two types is closed under taking local complementations. 
    
    Suppose that $G\in \mathcal{C}(\mathcal{A}, \mathcal{B})$ for some families $\mathcal{A}$ and $\mathcal{B}$, and let $v\in V(G)$. 
    If $v\in A$ for some $A\in \mathcal{A}$, then
    $G*v\in \mathcal{D}(A, \mathcal{A}\setminus \{A\}, \mathcal{B})$. So assume that $v\in B$ for some $B\in \mathcal{B}$. If $v$ has degree~$1$ in~$G$, then $G*v=G$. So, we may assume $v=c_B$. Then $G*v\in \mathcal{D}(B, \mathcal{A}, \mathcal{B}\setminus \{B\})$. 

    Now, suppose that $G\in \mathcal{D}(S,\mathcal{A}, \mathcal{B})$ for some set $S$ and families $\mathcal{A}$ and $\mathcal{B}$, and let $v\in V(G)$. 
    If $v\in A$ for some $A\in \mathcal{A}$, then $G*v\in \mathcal{D}(S, \mathcal{A}\setminus \{A\}, \mathcal{B}\cup \{A\})$. 
    We may assume that $G*v\neq G$ and therefore  $v$ has degree at least $2$. 
    If $v\in B$ for some $B\in \mathcal{B}$, then 
    $v=c_B$, and therefore $G*v\in \mathcal{D}(S, \mathcal{A}\cup \{B\}, \mathcal{B}\setminus \{B\})$. 
    
    Lastly, assume that $v\in S$. If $S$ is independent in $G$, then $G*v\in \mathcal{C}(\mathcal{A}\cup \{S\}, \mathcal{B})$. If $S$ is a clique in $G$, then $G*v\in \mathcal{C}(\mathcal{A}, \mathcal{B}\cup \{S\})$.
    This proves the claim. 

    It can be observed that no graph in $\mathcal{C}(\mathcal{A}, \mathcal{B})$ or $\mathcal{D}(S, \mathcal{A}, \mathcal{B})$ contains an induced subgraph isomorphic to $2P_4$, as a complete graph or a star graph has no $P_4$ and $P_4$ has no twins. 
    Therefore $G_t \coloneqq K_{t+2,t+2,\ldots,t+2}$ (with $t+3$ parts) has no vertex-minor isomorphic to $2P_4$.
    
    On the other hand, we claim that if $G$ is a graph in $\mathcal{C}(\mathcal{A}, \mathcal{B})$ where $\abs{\mathcal{A}\cup \mathcal{B}}\ge t+3$ and each set in $\mathcal{A}\cup \mathcal{B}$ has size at least $t+2$, then for every graph $\widetilde{G}$ locally equivalent to $G$ and every set $X$ of size at most $t$, $\widetilde{G}-X$ still contains a vertex-minor isomorphic to~$P_4$. 
    If $\widetilde{G}\in \mathcal{C}(\mathcal{A}', \mathcal{B}')$ for some $\mathcal{A}', \mathcal{B}'$, then there are two sets $U, W$ in $\mathcal{A}'\cup \mathcal{B}'$ that do not intersect $X$ and $\widetilde{G}[U\cup W]$ is a graph in $\mathcal{C}(\mathcal{A}'', \mathcal{B}'')$ for some $\mathcal{A}''$, $ \mathcal{B}''$ having at least two parts in total, and it contains a vertex-minor isomorphic to $P_4$. 
    A similar argument holds when $\widetilde{G}\in \mathcal{D}(S', \mathcal{A}', \mathcal{B}')$ for some $S'$, $\mathcal{A}'$, $\mathcal{B}'$, where in this case, we use the fact that $S'\setminus X$ is still a non-empty set complete to the rest of the graph, and $\abs{\mathcal{A}'\cup \mathcal{B}'}\ge 2$. This completes the proof of the second claim.
    Thus, for every $0$-perturbation $\widetilde G_t$ of $G_t$, $\widetilde G_t-X$ for every set $X$ of at most $t$ vertices contains a vertex-minor isomorphic to $P_4$. 
\end{proof}

\section{Pivot-minors and matroids}
\label{sec:pivotminor}

For bipartite graphs, our results can be generalized to a stricter notion of containment: \emph{pivot-minors}. (Recall from \cref{sec:prelims} that every pivot-minor is a vertex-minor but not necessarily vice versa.)
In \cref{sec:pivots}, we prove that bipartite circle graphs exhibit an Erd\H{o}s-P\'{o}sa property with respect to pivot-minors (\cref{thm:mainPivot}).
In \cref{sec:matroids}, we show that this implies a similar statement (\cref{cor:binaryMatroids}) about binary matroids, of which we also prove a rough converse (\cref{prop:binMatConverse}).
This section discusses the connection between pivot-minors in bipartite graphs and minors of binary matroids and states a few initial theorems.


Given a multigraph $G$, we write $M(G)$ for the cycle matroid of $G$. Let $M$ be a matroid. We write $E(M)$ for the set of elements of $M$ and $M^*$ for the dual matroid of $M$. Given $e \in E(M)$, we write $M/e$ for the matroid obtained from $M$ by contracting $e$ and $M\setminus e$ for the matroid obtained from $M$ by deleting $e$. Given a positive integer $k$, we write $kM$ for the matroid obtained by the union of $k$ disjoint copies of $M$. So for any multigraph~$G$, the matroids $M(kG)$ and $kM(G)$ are isomorphic. We say that a matroid $M$ is \emph{represented by} a matrix $A$ if the columns of $A$ are indexed by $E(M)$, and a subset $I$ of~$E(M)$ is independent in $M$ if and only if the columns of $A$ corresponding to~$I$ are linearly independent. 

In order to obtain \cref{cor:binaryMatroids}, we need to ``keep track'' of the sides of bipartite graphs. So, for the rest of this paper, a \emph{bipartite graph} is a tuple $\mathcal{G}=(G, A, B)$ so that $G$ is a graph and $A$ and $B$ are disjoint sets of vertices whose union is $V(G)$ such that every edge of~$G$ has exactly one end in $A$ and one end in $B$. Note that for any edge $uv$ of $G$ with $u \in A$, the tuple $(G \times uv, (A\setminus \{u\})\cup \{v\}, (B\setminus \{v\})\cup \{u\})$ is again a bipartite graph; we denote it by~$\mathcal{G}\times uv$.
For bipartite graphs, this is what it means to \emph{pivot} on an edge $uv$. We say that two bipartite graphs $\mathcal{G}=(G,A,B)$ and $\mathcal{G}'=(G', A', B')$ are \emph{strongly isomorphic} if there exists an isomorphism between $G$ and $G'$ which sends $A$ to $A'$ and $B$ to $B'$. Finally, the \emph{bipartite adjacency matrix} of $\mathcal{G}=(G,A,B)$ is the matrix with rows indexed by $A$ and columns indexed by $B$ so that an entry is $1$ if those vertices are adjacent in $G$ and $0$ otherwise.

With this set-up, pivot-minors of bipartite graphs are equivalent to minors of binary matroids. We need some more definitions to state this result carefully. 
Given a binary matroid $M$ and a base~$B$ of~$M$, the \emph{fundamental graph} $\mathcal{F}(M,B)$ is the bipartite graph $(G, B, E(M)\setminus B)$ where each element $e \in E(M)\setminus B$ is adjacent to the other elements in its fundamental circuit with respect to~$B$. The \emph{fundamental circuit} of an element $e\in E(M) \setminus B$ with respect to $B$ is the unique circuit of $M$ which is contained in $B \cup \{e\}$. Note that if $M$ is represented by a matrix $\big[ I \mid A \big]$ over $\GF(2)$ where $I$ is an identity matrix whose columns and rows are indexed by $B$, then $A$ is the bipartite adjacency matrix of~$G$. Thus, every bipartite graph is the fundamental graph of some binary matroid $M$ and base $B$ of~$M$.

\begin{lemma}[Bouchet~\cite{graphicIsoSystems}; see~\cite{RWAndVM}]
\label{lem:pivotFundMatroid}
    Let $M_1$ and $M_2$ be binary matroids with respective bases~$B_1$ and $B_2$.
    Then $M_1$ is (isomorphic to) a minor of $M_2$ if and only if $\mathcal{F}(M_1,B_1)$ is (strongly isomorphic to) a pivot-minor of $\mathcal{F}(M_2,B_2)$.
\end{lemma}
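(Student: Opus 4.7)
The plan is to establish a tight correspondence between $\GF(2)$-representations of binary matroids and bipartite adjacency matrices of fundamental graphs, and then to translate between matroid minor operations (deletion, contraction, basis change) and pivot-minor operations (vertex deletion, pivot).

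The key identification is the following. If $M$ is represented by $\bigl[\,I_B \mid A\,\bigr]$ over $\GF(2)$, with $I_B$ the $B\times B$ identity, then by the definition of the fundamental circuit of $e\in E(M)\setminus B$ with respect to $B$, the set of elements of $B$ adjacent to $e$ in $\mathcal{F}(M,B)$ equals the support of the column of $A$ indexed by $e$. Hence $A$ is precisely the bipartite adjacency matrix of $\mathcal{F}(M,B)$, and conversely every bipartite graph arises as $\mathcal{F}(M,B)$ for some binary matroid $M$ and base $B$.

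Next I would show that pivoting on an edge $uv$ of $\mathcal{F}(M,B)$ with $u\in B$ and $v\in E(M)\setminus B$ corresponds to an elementary basis exchange. Setting $B'=(B\setminus\{u\})\cup\{v\}$, a single Gauss--Jordan step on $\bigl[\,I_B\mid A\,\bigr]$ using the $1$ at entry $(u,v)$ (adding row $u$ to every row with a $1$ in column $v$), followed by swapping columns $u$ and $v$, produces a matrix of the form $\bigl[\,I_{B'}\mid A'\,\bigr]$ representing the same matroid~$M$. A short block-matrix computation confirms that $A'$ is exactly the bipartite adjacency matrix of $\mathcal{F}(M,B)\pivot uv$, so this pivoted graph equals $\mathcal{F}(M,B')$. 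Since any two bases of a matroid are connected by a sequence of elementary exchanges, $\mathcal{F}(M,B)$ and $\mathcal{F}(M,B')$ are pivot-equivalent for every pair of bases $B,B'$ of~$M$. I would then observe that deleting the column of $\bigl[\,I_B\mid A\,\bigr]$ indexed by $e\in E(M)\setminus B$ represents $M\setminus e$ with base $B$, while deleting the row and column both indexed by $e\in B$ represents $M/e$ with base $B\setminus\{e\}$; in both cases, the new representation is the one associated with $\mathcal{F}(M,B)-e$. Thus deleting a vertex of $\mathcal{F}(M,B)$ on the non-base side realises a matroid deletion, and deleting a vertex on the base side realises a matroid contraction.

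For the forward direction of the lemma, suppose $M_1\cong M_2/C\setminus D$ for disjoint $C,D\subseteq E(M_2)$. After possibly moving loops and coloops between $C$ and $D$, we may assume $C$ is independent and $D$ is coindependent in $M_2$. Identifying $E(M_1)$ with $E(M_2)\setminus(C\cup D)$ and $B_1$ with its image, one verifies that $C\cup B_1$ is a base of $M_2$, so we may set $B_2'=C\cup B_1$. A sequence of pivots transforms $\mathcal{F}(M_2,B_2)$ into $\mathcal{F}(M_2,B_2')$, and then deleting the vertices of $C$ on the base side and of $D$ on the non-base side produces a bipartite graph strongly isomorphic to $\mathcal{F}(M_1,B_1)$. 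The reverse direction is symmetric: a sequence of pivots and vertex deletions transforming $\mathcal{F}(M_2,B_2)$ into a strong isomorph of $\mathcal{F}(M_1,B_1)$ translates, step by step, into a sequence of basis changes, deletions, and contractions taking $M_2$ to a matroid isomorphic to $M_1$. The main obstacle is the bookkeeping needed to verify that $C\cup B_1$ really is a base of $M_2$ (and that the resulting bipartite graph is strongly isomorphic, not merely isomorphic, to $\mathcal{F}(M_1,B_1)$); once this is done, everything else reduces to linear algebra over $\GF(2)$ together with the basis-exchange axiom.
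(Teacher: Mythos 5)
The paper does not actually prove this lemma; it is stated with a citation to Bouchet and to Oum's survey. So there is no in-paper proof to compare against, and your proposal should be judged on its own merits.

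Your proof is correct and is essentially the standard argument attributed to Bouchet. The central observations are all right: if $M$ is represented by $\bigl[\,I_B \mid A\,\bigr]$ then $A$ is the bipartite adjacency matrix of $\mathcal{F}(M,B)$; pivoting $\mathcal{F}(M,B)$ on an edge $uv$ with $u\in B$, $v\notin B$ realizes the single-element basis exchange $B\mapsto (B\setminus\{u\})\cup\{v\}$ (and conversely every single-element basis exchange arises this way, since $(B\setminus\{u\})\cup\{v\}$ being a base forces $u$ to lie in the fundamental circuit of $v$, i.e.\ forces $uv$ to be an edge); deleting a non-base-side vertex realizes $M\setminus e$, while deleting a base-side vertex realizes $M/e$ with base $B\setminus\{e\}$. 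Combined with connectivity of the basis-exchange graph, this gives pivot-equivalence of $\mathcal{F}(M,B)$ over all bases $B$, and the rest of the argument (normalizing $C$ independent and $D$ coindependent, checking $C\cup B_1$ is a base of $M_2$ via the rank count $|C|+r(M_1)=r(M_2)$, translating the reverse direction operation by operation) goes through. Two small remarks. First, the reduction to $C$ independent and $D$ coindependent is not literally about ``moving loops and coloops'' of $M_2$; one moves to $D$ a subset of $C$ whose elements are loops of intermediate contractions (and symmetrically for coloops), which is the usual Oxley argument, but this is harmless. Second, the step you flag as ``the main obstacle'' is in fact routine: $B_1$ is independent in $M_2/C$, $C$ is independent in $M_2$, so $C\cup B_1$ is independent in $M_2$ and has the right cardinality, and the strong isomorphism at the end is just the fixed isomorphism $\phi\colon M_2/C\setminus D\to M_1$ applied to the two sides.
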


Moreover, de Fraysseix~\cite{VMandInterlacement} proved that fundamental graphs of planar multigraphs are exactly the bipartite circle graphs. (A bipartite graph $\mathcal{G} = (G, A, B)$ is a \emph{circle graph} if $G$ is a circle graph. A \emph{spanning forest} of a multigraph $G$ is a maximal acyclic subgraph of~$G$.)

\begin{theorem}[de Fraysseix~{\cite[Proposition 6]{VMandInterlacement}}] 
\label{thm:planarBipCircle}
    Let $\mathcal{G}=(G, A, B)$ be a bipartite graph. Then $\mathcal{G}$ is a circle graph if and only if there exists a planar multigraph $H$ and a spanning forest~$T$ of~$H$ so that $\mathcal{G}$ is strongly isomorphic to~$\mathcal{F}(M(H),E(T))$.
\end{theorem}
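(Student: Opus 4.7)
The plan is to prove both implications by a classical cut-and-open construction, which sets up a bijection between chord diagrams and planar multigraphs equipped with a spanning forest.

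For the ``if'' direction, suppose $H$ is plane-embedded with a spanning forest $T$; I would handle each connected component of $H$ independently, since a disjoint union of circle graphs is a circle graph and $\mathcal{F}(M(H),E(T))$ respects disjoint unions. Within a component, I would cut the plane along every edge of $T$, opening the embedding into a topological disk whose boundary traverses each tree edge exactly twice, namely the Euler tour of $T$ induced by the planar embedding. The non-tree edges of $H$, by planarity, embed inside this disk as pairwise non-crossing chords. For each tree edge $e$, I add a new chord joining its two occurrences on the boundary. The three crossing conditions are then straightforward: two tree chords never cross, because the pair of boundary-occurrences of one tree edge is either bracketed by or disjoint from the other pair (the standard nesting property of subtrees in a planar DFS); two non-tree chords never cross, by planarity; and a tree chord $e$ crosses a non-tree chord $f$ iff the endpoints of $f$ lie on opposite sides of $e$ along the boundary, iff $e$ lies on the $T$-path between the endpoints of $f$, iff $e$ belongs to the fundamental circuit of $f$. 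This is precisely the edge relation in $\mathcal{F}(M(H),E(T))$, so the chord diagram realizes $\mathcal{G}$.

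For the ``only if'' direction, let $\mathcal{G}=(G,A,B)$ be a bipartite circle graph with a chord representation in a disk. Since $A$ and $B$ are independent sets in $G$, no two $A$-chords cross and no two $B$-chords cross. The non-crossing $A$-chords partition the disk into finitely many regions, which I declare to be the vertices of a multigraph $H$; I add an edge between the two regions separated by each $A$-chord, and an edge between the two regions containing the endpoints of each $B$-chord on the circle. The $A$-edges form a spanning forest $T$ of $H$: adding an innermost $A$-chord to a subconfiguration splits one region into two, simultaneously adding one vertex and one edge to $H$ while creating no cycle among $A$-edges, so induction on $\lvert A \rvert$ gives the claim. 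The multigraph $H$ is planar because the whole construction lives inside a disk; concretely, placing a vertex inside each region and drawing each edge as a curve along its corresponding chord (with a small local detour at every crossing with a chord of the opposite colour) yields a plane embedding.

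To finish, I would verify $\mathcal{F}(M(H),E(T))$ is strongly isomorphic to $\mathcal{G}$: a tree edge $e$ from an $A$-chord and a non-tree edge $f$ from a $B$-chord are adjacent in the fundamental graph iff $e$ lies on the $T$-path between the endpoints of $f$, iff the $A$-chord for $e$ separates the two endpoints of the $B$-chord for $f$, iff the two chords cross, iff $e$ and $f$ are adjacent in $G$. The main obstacle I expect is confirming that the multigraph $H$ built in the reverse direction is genuinely planar rather than merely drawn in the disk with apparent crossings at chord intersections. The cleanest resolution is to check that applying the forward construction to this $(H,T)$ reproduces the original chord diagram; any chord crossing in the diagram then comes from a geometric crossing between a tree edge and a non-tree edge of $H$ that can be locally rerouted to a genuine plane embedding, confirming planarity.
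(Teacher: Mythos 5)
The paper cites this result from de Fraysseix without reproving it, so there is no in-text proof to compare against; I am assessing your argument on its own. Your cut-and-open strategy is the standard route to this correspondence, and the ``if'' direction is essentially sound as written: cutting the sphere along $T$ gives a disk whose boundary is the doubled Euler tour of $T$, the non-tree edges become pairwise non-crossing chords by planarity, the tree chords are pairwise non-crossing by the nesting of the two boundary occurrences of each tree edge, and a tree chord $e$ crosses a non-tree chord $f$ exactly when $e$ separates the endpoints of $f$ in $T$, i.e.\ lies in the fundamental circuit of $f$. (One small imprecision: you should cut the sphere, not the plane, to obtain a disk.)

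The genuine gap, which you correctly single out, is planarity of $H$ in the ``only if'' direction, and your proposed resolution does not close it. Applying the forward construction to $(H,T)$ requires a plane embedding of $H$, which is precisely what you are trying to produce. Moreover, the implicit claim that ``a drawing whose only crossings are between a tree edge and a non-tree edge can be locally rerouted to a plane embedding'' is false in general: take a crossing-number-one drawing of $K_5$ and choose a spanning tree $T$ that contains one of the two crossing edges; then the only crossing is tree-versus-non-tree, yet $K_5$ is not planar. The extra structure you built does force planarity, but the argument must use it explicitly. A clean way: embed the dual tree of the $A$-chord arrangement in the interior of the disk (this is plane by construction), and route each $B$-edge from its two region-vertices out to the two endpoints of its $B$-chord on the circle and then through the complementary disk of the sphere. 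Because the $B$-chords are pairwise non-interleaving, these exterior arcs can be drawn pairwise disjoint, and within each region the finitely many edges leaving the region-vertex can be arranged as a planar star since the region is a disk. This produces a plane embedding of $H$; combined with the three ``iff''s you already verified, the proof is complete.
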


We require the following grid theorem for pivot-minors in bipartite graphs. This theorem is a corollary of \cref{lem:pivotFundMatroid} and the grid theorem for $\GF(q)$-representable matroids of Geelen, Gerards, and Whittle~\cite{GridThmRepresentable}. This connection was discovered by Oum~\cite{RWAndVM}, who conjectures~\cite[Conjecture 1.1]{lineGraphsGridThm} that for any bipartite circle graph $H$ (without specified sides), every graph $G$ of sufficiently large rank-width contains a pivot-minor isomorphic to $H$. For reasons we do not get into having to do with matroid duality, for the next theorem, it does not matter whether there are specified sides or not; it can be shown that the two formulations are equivalent.

\begin{theorem}[Geelen, Gerards, and Whittle~\cite{GridThmRepresentable}]
\label{thm:gridPivot}
    For any bipartite circle graph $\mathcal{H}$, there exists an integer $r_1(\mathcal{H})$ so that every bipartite graph $\mathcal{G}$ with no pivot-minor strongly isomorphic to $\mathcal{H}$ has rank-width at most~$r_1(\mathcal{H})$.
\end{theorem}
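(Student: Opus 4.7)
The plan is to derive Theorem \ref{thm:gridPivot} as a direct corollary of Lemma \ref{lem:pivotFundMatroid}, Theorem \ref{thm:planarBipCircle}, and the Grid Theorem for $\GF(2)$-representable matroids of Geelen, Gerards, and Whittle. The bridge between matroid branch-width and graph rank-width is the well-known result of Oum which says that for any binary matroid $M$ with $\abs{E(M)}\ge 2$ and any base $B$ of $M$, the branch-width of $M$ and the rank-width of $\mathcal{F}(M,B)$ differ by at most one.

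First I would use Theorem \ref{thm:planarBipCircle} on the hypothesized bipartite circle graph $\mathcal{H}$ to obtain a planar multigraph $H$ and a spanning forest $T$ of $H$ such that $\mathcal{H}$ is strongly isomorphic to $\mathcal{F}(M(H),E(T))$. Since any planar multigraph is a minor of $K_n$ for some $n$, its cycle matroid is a minor of $M(K_n)$, so the Geelen--Gerards--Whittle theorem (specialized to $q=2$) yields a constant $k=k(H)$ such that every binary matroid with branch-width at least $k$ contains $M(H)$ as a minor.

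Next, let $\mathcal{G}=(G,A,B)$ be an arbitrary bipartite graph and let $N$ be its bipartite adjacency matrix with rows indexed by $A$ and columns indexed by $B$. Let $M_{\mathcal{G}}$ be the binary matroid represented by $[I\mid N]$ with the identity columns indexed by $A$. Then $A$ is a base of $M_{\mathcal{G}}$ and, by construction, $\mathcal{F}(M_{\mathcal{G}},A)$ is strongly isomorphic to $\mathcal{G}$. If the rank-width of $\mathcal{G}$ exceeds $k$, then by Oum's inequality the branch-width of $M_{\mathcal{G}}$ is at least $k$, so $M_{\mathcal{G}}$ contains $M(H)$ as a minor. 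Applying Lemma \ref{lem:pivotFundMatroid} (with $M_1=M(H)$, $B_1=E(T)$, $M_2=M_{\mathcal{G}}$, $B_2=A$) translates this into the statement that $\mathcal{H}\cong \mathcal{F}(M(H),E(T))$ is strongly isomorphic to a pivot-minor of $\mathcal{F}(M_{\mathcal{G}},A)\cong \mathcal{G}$. Taking $r_1(\mathcal{H})\coloneqq k$ then completes the proof.

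The only real subtlety is bookkeeping about the bipartition: since pivoting a bipartite graph swaps the labels of the two endpoints of the pivot edge, one must ensure that every step preserves \emph{strong} isomorphism (the sides $A$ and $B$ of the bipartite graph) rather than merely abstract isomorphism. This is taken care of automatically by the construction of fundamental graphs, whose two sides are distinguished as the base $B$ and its complement $E(M)\setminus B$, and by the precise formulation of Lemma \ref{lem:pivotFundMatroid}, which records exactly this correspondence. I expect no further obstacle; the theorem is essentially a ``translation lemma'' wrapping the Geelen--Gerards--Whittle grid theorem.
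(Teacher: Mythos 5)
Your overall strategy is the one the paper itself indicates: the paper does not spell out a proof of \cref{thm:gridPivot} but states immediately beforehand that it is ``a corollary of \cref{lem:pivotFundMatroid} and the grid theorem for $\GF(q)$-representable matroids of Geelen, Gerards, and Whittle,'' and the surrounding text also points to \cref{thm:planarBipCircle} as the bridge to planar graphs. You have correctly reconstructed this outline and supplied the (omitted) details, including the key observation that every bipartite graph arises as a fundamental graph $\mathcal{F}(M_{\mathcal G},A)$, the role of Oum's branch-width/rank-width comparison, and the use of \cref{lem:pivotFundMatroid} to convert a matroid-minor statement into a strongly-isomorphic-pivot-minor statement.

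There is, however, one genuine error in the middle step. You write that since any planar multigraph $H$ is a minor of $K_n$, its cycle matroid is a minor of $M(K_n)$, ``so'' the Geelen--Gerards--Whittle theorem yields a branch-width threshold forcing an $M(H)$-minor. That ``so'' does not follow. The GGW grid theorem says that for each \emph{planar} graph $H$ there is a threshold $k(H)$ such that every $\GF(q)$-representable matroid of branch-width at least $k(H)$ contains $M(H)$ as a minor; it says nothing about $M(K_n)$ for $n\geq 5$, and indeed the statement would be false for $H=K_5$ (cycle matroids of planar graphs have unbounded branch-width yet never contain $M(K_5)$ as a minor). Routing through $K_n$ therefore loses exactly the planarity that makes the theorem applicable. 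The fix is simple and does not change anything downstream: either apply GGW directly to the planar (multi)graph $H$ obtained from \cref{thm:planarBipCircle}, or, if one prefers the grid-only phrasing of GGW, note that $H$ is a minor of a sufficiently large planar grid $G_n$ and hence $M(H)$ is a minor of $M(G_n)$. With that correction (and attending to the off-by-one slack between $\bw(M_{\mathcal G})$ and $\rw(\mathcal G)$ when defining $r_1(\mathcal H)$), the argument is sound and matches the paper's intent.
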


We also require the following analog of \cref{thm:wqo} about well-quasi-ordering. This theorem is a corollary of \cref{lem:pivotFundMatroid} and a theorem of Geelen, Gerards, and Whittle~\cite{branchWidthWQO} about $\GF(q)$-representable matroids of large branch-width. (The branch-width of a binary matroid is related to the rank-width of its fundamental graph; see~\cite{RWAndVM}.) We note that \cref{thm:wqoPivot} is a corollary of \cref{thm:wqo} too, as explained in~\cite{RWandWQO}.

\begin{theorem}[Geelen, Gerards, and Whittle~\cite{branchWidthWQO}]
\label{thm:wqoPivot}
    For any integer $r$ and any infinite set $\mathcal{C}$ of bipartite graphs of rank-width at most~$r$, there exist distinct bipartite graphs $\mathcal{G}_1, \mathcal{G}_2 \in \mathcal{C}$ so that $\mathcal{G}_1$ is strongly isomorphic to a pivot-minor of $\mathcal{G}_2$.
\end{theorem}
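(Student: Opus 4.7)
The plan is to deduce this theorem from \cref{lem:pivotFundMatroid} together with the theorem of Geelen, Gerards, and Whittle~\cite{branchWidthWQO} that, for each positive integer $k$, the class of $\GF(2)$-representable matroids of branch-width at most $k$ is well-quasi-ordered under the minor relation.

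For each $\mathcal{G}=(G,A,B) \in \mathcal{C}$, I would consider the binary matroid $M_\mathcal{G}$ represented over $\GF(2)$ by the matrix $\bigl[\, I \mid N_\mathcal{G}\, \bigr]$, where the rows and the columns of $I$ are indexed by $A$ and $N_\mathcal{G}$ is the bipartite adjacency matrix of $\mathcal{G}$. Then $A$ is a base of $M_\mathcal{G}$, and by construction the fundamental graph $\mathcal{F}(M_\mathcal{G}, A)$ is precisely $\mathcal{G}$. The well-known relationship between the rank-width of a fundamental graph and the branch-width of its underlying binary matroid (see~\cite{RWAndVM}) implies that the branch-width of $M_\mathcal{G}$ is bounded by a function of $r$. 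Hence every matroid in $\{M_\mathcal{G} : \mathcal{G} \in \mathcal{C}\}$ has branch-width at most some $k=k(r)$.

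Next I would fix any sequence $\mathcal{G}_1, \mathcal{G}_2, \ldots$ of pairwise distinct elements of $\mathcal{C}$ and apply the Geelen-Gerards-Whittle theorem to the corresponding sequence $M_{\mathcal{G}_1}, M_{\mathcal{G}_2}, \ldots$ of binary matroids of branch-width at most $k$. This yields indices $i<j$ for which $M_{\mathcal{G}_i}$ is isomorphic to a minor of $M_{\mathcal{G}_j}$. Invoking \cref{lem:pivotFundMatroid} with bases $A_i$ and $A_j$ then shows that $\mathcal{G}_i = \mathcal{F}(M_{\mathcal{G}_i}, A_i)$ is strongly isomorphic to a pivot-minor of $\mathcal{F}(M_{\mathcal{G}_j}, A_j) = \mathcal{G}_j$, which finishes the argument.

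The main obstacle is the quantitative comparison of rank-width and branch-width used in the second step; although this equivalence is essentially folklore and is made explicit in~\cite{RWAndVM}, some care is needed because it relates two different width parameters living on two different kinds of objects. As an alternative, one could deduce the theorem directly from \cref{thm:wqo} by using a vertex-labeled variant of Oum's well-quasi-ordering theorem in which each vertex carries a label recording which side of the bipartition it lies on, thereby avoiding the detour through matroids altogether; this is the route suggested in~\cite{RWandWQO}.
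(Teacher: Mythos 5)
Your proposal is correct and takes essentially the same route the paper indicates: construct the binary matroid whose fundamental graph with respect to the row side is the given bipartite graph, transfer bounded rank-width to bounded branch-width, apply the Geelen--Gerards--Whittle well-quasi-ordering theorem for $\GF(q)$-representable matroids of bounded branch-width, and translate back via \cref{lem:pivotFundMatroid}. The paper treats this as a cited result and only sketches exactly this derivation (including the alternative deduction from \cref{thm:wqo} that you mention at the end), so nothing is missing.
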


\section{An Erd\H{o}s-P\'{o}sa property for pivot-minors}
\label{sec:pivots}

In this section, we adjust the steps of the main result to prove that bipartite circle graphs exhibit an Erd\H{o}s-P\'{o}sa property with respect to pivot-minors (\cref{thm:mainPivot}).
To this end, we need some analogous definitions of perturbations and robustness. So, given a non-negative integer~$t$, we say that a bipartite graph $\mathcal{G}_1$ is a \emph{$t$-pivot-perturbation} of a bipartite graph $\mathcal{G}_2$ with the same vertex set as $\mathcal{G}_1$ if there exists a bipartite graph which has at most $t$ more vertices and contains both $\mathcal{G}_1$ and $\mathcal{G}_2$ as pivot-minors. Likewise, we say that a bipartite graph $\mathcal{G}$ is \emph{$t$-pivot-robust} for a bipartite graph $\mathcal{H}$ if every $t$-pivot-perturbation of $\mathcal{G}$ contains a pivot-minor that is strongly isomorphic to $\mathcal{H}$. 

\Cref{lem:pertProperties,lem:robustCommute} readily translate to this setting. As such, we give the analogous statements below without proof. 

\begin{lemma}
    \label{lem:pertPropertiesPivot}
    Let $s$, $t$ be non-negative integers and $\mathcal{G}_1$, $\mathcal{G}_2$, $\mathcal{G}_3$ be bipartite graphs. 
    If $\mathcal{G}_2$ is an $s$-pivot-perturbation of~$\mathcal{G}_1$ and $\mathcal{G}_3$ is a $t$-pivot-perturbation of $\mathcal{G}_2$, then $\mathcal{G}_3$ is an $(s+t)$-pivot-perturbation of~$\mathcal{G}_1$.
\end{lemma}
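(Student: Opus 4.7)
The plan is to adapt the proof of \cref{lem:pertProperties} almost verbatim, with only one small adjustment to handle the bipartite structure. From the hypothesis, I would fix a bipartite graph $\widetilde{\mathcal{G}}$ on $\abs{V(\mathcal{G}_2)}+s$ vertices containing both $\mathcal{G}_1$ and $\mathcal{G}_2$ as pivot-minors, and a bipartite graph $\widetilde{\mathcal{G}}'$ on $\abs{V(\mathcal{G}_2)}+t$ vertices containing both $\mathcal{G}_2$ and $\mathcal{G}_3$ as pivot-minors. The goal is then to glue these two witnesses along a common copy of $\mathcal{G}_2$ and read off the conclusion.

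Before gluing, I would arrange that $\mathcal{G}_2$ appears as a strongly induced sub-bipartite-graph of each witness. This is the only place where the pivot-minor version differs from the vertex-minor one. By the definition of pivot-minor, some bipartite graph pivot-equivalent to $\widetilde{\mathcal{G}}$ contains (a strongly isomorphic copy of) $\mathcal{G}_2$ as an induced sub-bipartite-graph, obtained by deleting the other vertices. Applying this pivot sequence to $\widetilde{\mathcal{G}}$ yields a new witness with the desired property, and $\mathcal{G}_1$ is still a pivot-minor of it because pivot-equivalence preserves the ``contains as a pivot-minor'' relation. An identical normalization applies to $\widetilde{\mathcal{G}}'$. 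After this step, the bipartitions of the two copies of $\mathcal{G}_2$ sit compatibly in the two witnesses.

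Now I would glue $\widetilde{\mathcal{G}}$ and $\widetilde{\mathcal{G}}'$ by identifying them along their common strongly induced copy of $\mathcal{G}_2$. The result is a bipartite graph $\widetilde{\mathcal{G}}''$ on $\abs{V(\mathcal{G}_2)} + s + t$ vertices whose bipartition is well-defined, since the identification respects sides. To see $\mathcal{G}_1$ is a pivot-minor of $\widetilde{\mathcal{G}}''$, delete all vertices in $V(\widetilde{\mathcal{G}}') \setminus V(\mathcal{G}_2)$ to recover $\widetilde{\mathcal{G}}$, from which $\mathcal{G}_1$ is a pivot-minor by hypothesis; symmetrically, deleting $V(\widetilde{\mathcal{G}}) \setminus V(\mathcal{G}_2)$ recovers $\widetilde{\mathcal{G}}'$, giving $\mathcal{G}_3$. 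Hence $\mathcal{G}_3$ is an $(s+t)$-pivot-perturbation of $\mathcal{G}_1$.

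I do not anticipate a genuine obstacle: the only non-trivial point is the normalization step, which is essentially by definition of pivot-minor in the bipartite setting. If I were worried about anything, it would be making sure the identification along $\mathcal{G}_2$ really does produce a valid bipartite graph, but this is automatic once $\mathcal{G}_2$ is strongly induced in both witnesses.
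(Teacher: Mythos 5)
Your proof is correct and is exactly the translation of the paper's proof of \cref{lem:pertProperties} that the authors have in mind when they say the lemma ``readily translates'' (they do not spell it out). You correctly identify the one point that needs care: the normalization step must use pivots rather than arbitrary local complementations so the witnesses stay bipartite, and once both witnesses contain $\mathcal{G}_2$ as a strongly induced sub-bipartite-graph the gluing respects sides automatically.
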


\begin{lemma}
    \label{lem:robustCommutePivot}
    Let $\mathcal{H}$ be a bipartite graph, and let $r$ and $t$ be non-negative integers with $t \geq r$. If $\mathcal{G}$ is a bipartite graph that is $t$-pivot-robust for $\mathcal{H}$, then any $r$-pivot-perturbation of~$\mathcal{G}$ is $(t-r)$-pivot-robust for~$\mathcal{H}$.
\end{lemma}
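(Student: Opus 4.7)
The plan is to mirror the proof of \cref{lem:robustCommute} exactly, substituting the pivot-minor analogues throughout. The underlying fact that makes the vertex-minor version work is simply that perturbations chain, which is the content of \cref{lem:pertProperties}. The pivot-minor setting provides the corresponding chaining property via \cref{lem:pertPropertiesPivot}, so the argument transfers without modification.

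Concretely, the first step would be to let $\mathcal{G}_1$ be an arbitrary $r$-pivot-perturbation of $\mathcal{G}$; the goal is to show that $\mathcal{G}_1$ is $(t-r)$-pivot-robust for $\mathcal{H}$. To verify this, fix an arbitrary $(t-r)$-pivot-perturbation $\mathcal{G}_2$ of $\mathcal{G}_1$; it suffices to argue that $\mathcal{G}_2$ has a pivot-minor strongly isomorphic to $\mathcal{H}$. Applying \cref{lem:pertPropertiesPivot} with the triple $(\mathcal{G}, \mathcal{G}_1, \mathcal{G}_2)$ (in the roles of $\mathcal{G}_1, \mathcal{G}_2, \mathcal{G}_3$ of that lemma, with $s=r$ and the parameter $t-r$ in place of $t$), we deduce that $\mathcal{G}_2$ is a $t$-pivot-perturbation of $\mathcal{G}$. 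Since $\mathcal{G}$ is $t$-pivot-robust for $\mathcal{H}$ by hypothesis, the definition of pivot-robustness immediately yields that $\mathcal{G}_2$ has a pivot-minor strongly isomorphic to $\mathcal{H}$, finishing the argument.

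There is no real obstacle here; the only content is the transitive chaining lemma \cref{lem:pertPropertiesPivot}, which itself is proved by the same ``glue two witness graphs along their common induced subgraph $\mathcal{G}_2$'' construction as \cref{lem:pertProperties}, using that bipartite graphs are closed under pivoting and that pivot-minors compose. Consequently, I expect the formal write-up of \cref{lem:robustCommutePivot} to be a single short paragraph that is a verbatim translation of the vertex-minor version.
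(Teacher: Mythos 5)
Your proposal is correct and matches the paper exactly: the paper explicitly states that \cref{lem:robustCommutePivot} is the direct translation of \cref{lem:robustCommute} and omits the proof for precisely the reason you give, namely that the only input is the chaining lemma \cref{lem:pertPropertiesPivot}. Your write-up is the intended one-paragraph verbatim adaptation.
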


For an analogous statement to \cref{lem:pertCutRank} we need to be careful because our definition of a $t$-pivot-perturbation requires us to find a \emph{bipartite} graph containing two bipartite graphs as pivot-minors.
\begin{lemma}
    \label{lem:pertCutRankPivot}
    For any bipartite graph $\mathcal{G}=(G, A, B)$ and any subset $X$ of $V(G)$, the bipartite graph $(G-\delta_G(X), A, B)$ is a $2\rho_G(X)$-pivot-perturbation of~$\mathcal{G}$.
\end{lemma}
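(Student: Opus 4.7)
The plan is to mimic the proof of \cref{lem:pertCutRank}, but taking care that every graph constructed along the way is bipartite with the correct sides. The key new ingredient is that, in a bipartite graph, the cut matrix splits into two pieces: setting $A' = A \cap X$, $B' = B \cap X$, $A'' = A \setminus X$, and $B'' = B \setminus X$, the $X \times (V(G)\setminus X)$ submatrix of the adjacency matrix of $G$ has only two nonzero blocks, the $A' \times B''$ block $M_1$ and the $B' \times A''$ block $M_2$, so $\rho_G(X) = \operatorname{rank}(M_1) + \operatorname{rank}(M_2)$. Write these ranks as $r_1$ and $r_2$ and, as in the proof of \cref{lem:pertCutRank}, decompose each $M_\ell$ as a sum of rank-one binary matrices, specified by nonempty sets $A'_i \subseteq A'$, $B''_i \subseteq B''$ for $i \in [r_1]$ and $B'_j \subseteq B'$, $A''_j \subseteq A''$ for $j \in [r_2]$.

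Next, I would introduce one bipartite gadget per rank-one summand. For each $i \in [r_1]$, add a vertex $x_i$ to the side $B$ and a vertex $y_i$ to the side $A$, making $x_i$ adjacent to $A'_i$ and $y_i$ adjacent to $B''_i \cup \{x_i\}$. For each $j \in [r_2]$, symmetrically add $u_j \in A$ and $v_j \in B$ with $u_j$ adjacent to $B'_j$ and $v_j$ adjacent to $A''_j \cup \{u_j\}$. No other edges are added, so every new edge respects the bipartition, and the enlarged graph $\widehat{\mathcal{G}}$ is bipartite with $2(r_1 + r_2) = 2\rho_G(X)$ extra vertices. The bipartite graph $\mathcal{G}$ is a pivot-minor of $\widehat{\mathcal{G}}$ (just delete the added vertices), and the goal is then to realize $(G - \delta_G(X), A, B)$ as another pivot-minor of $\widehat{\mathcal{G}}$.

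I would achieve this by pivoting successively on each $x_i y_i$ and each $u_j v_j$, then deleting all added vertices. Since pivoting on an edge of a bipartite graph just swaps the sides of its two endpoints, each intermediate graph remains bipartite, and after the deletions the sides are back to $(A, B)$. A direct computation via $G \pivot uv = G * u * v * u$, using that $A'_i \subseteq A$ and $B''_i \subseteq B$ are each independent in $G$, shows that pivoting on $x_i y_i$ only toggles the edges between $A'_i$ and $B''_i$ among the original vertices; the analogous statement holds for $u_j v_j$. The main bookkeeping obstacle — really the only subtlety — is to verify that the gadgets do not interfere with one another: this follows because no gadget vertex lies in any of the sets $A'_{i'}$, $B''_{i'}$, $B'_{j'}$, or $A''_{j'}$ of another gadget, and because the only edges between added vertices are the pivoting edges themselves, which are internal to a single gadget. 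Consequently, the combined effect of the pivots on $V(G)$ is precisely to toggle $M_1 + M_2$, which corresponds exactly to $\delta_G(X)$, yielding the desired $2\rho_G(X)$-pivot-perturbation.
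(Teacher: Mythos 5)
Your proof is correct and follows essentially the same approach as the paper's: both split the cut matrix into the $(A\cap X)\times(B\setminus X)$ and $(B\cap X)\times(A\setminus X)$ blocks, decompose each block into rank-one summands, attach one two-vertex pivoting gadget per summand with sides chosen to preserve the bipartition, and then pivot on the gadget edges and delete. The only difference is notational (you use two separately named gadget families $x_i y_i$ and $u_j v_j$, whereas the paper uses a single indexed family $x_i y_i$ with $i \in [r]$ and a split at $r_1$), and your careful verification that the gadgets do not interfere is exactly the bookkeeping the paper leaves implicit.
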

\begin{proof}
    For disjoint subsets $S$ and $T$ of $V(G)$, let us write $\rho_G(S,T)$ for the rank of the $S\times T$ submatrix of the adjacency matrix of $G$.
    Let $Y=V(G)\setminus X$.
    Let $r=\rho_G(X)$, $r_1=\rho_G(X\cap A,Y\cap B)$, and $r_2=\rho_G(X\cap B,Y\cap A)$. 
    Since $G$ is bipartite, we have that $r=r_1+r_2$ and 
    there are no edges between $X\cap A$ and $Y\cap A$
    and no edges between $X\cap B$ and $Y\cap B$.    

    By a well-known property in linear algebra, for each $i\in\{1,2\}$, every matrix of rank~$r_i$ is a sum of
     $r_i$ rank-one matrices. 
    Therefore, like the proof of~\cref{lem:pertCutRank}, 
    there exist nonempty $X_1, X_2, \ldots, X_r\subseteq X$ and $Y_1, Y_2, \ldots, Y_r\subseteq Y$ such that 
    \begin{itemize}
        \item $X_1,X_2,\ldots,X_{r_1}\subseteq X\cap A$,
        \item $Y_1,Y_2,\ldots,Y_{r_1}\subseteq Y\cap B$,
        \item $X_{r_1+1},X_{r_1+2},\ldots,X_r\subseteq X\cap B$,
        \item $Y_{r_1+1},Y_{r_1+2},\ldots,Y_r\subseteq Y\cap A$, and 
        \item a vertex $x\in X$ is adjacent to a vertex $y\in Y$ if and only if 
    there are an odd number of $i\in [r]$ such that $(x,y)\in X_i\times Y_i$.
    \end{itemize}
    
    Let us construct a graph $\widehat G$ by adding $2r$ vertices $x_1$, $x_2$, $\ldots$, $x_r$, $y_1$, $y_2$, $\ldots$, $y_r$ to $G$ and making $x_i$ adjacent to all vertices in $X_i$ and $y_i$ adjacent to all vertices in $Y_i\cup \{x_i\}$.
    Then $(\widehat G,A\cup \{y_1,y_2,\ldots,y_{r_1},x_{r_1+1},x_{r_1+2},\ldots,x_r\}, B\cup\{x_1,x_2,\ldots,x_{r_1},y_{r_1+1},y_{r_1+2},\ldots,y_{r}\})$ is a bipartite graph and 
    \[ G-\delta_G(X)=\widehat G\pivot x_1y_1\pivot x_2y_2\pivot\cdots\pivot x_ry_r -\{x_1,x_2,\ldots,x_r,y_1,y_2,\ldots,y_r\}.\]  
    Therefore $(G-\delta_G(X),A,B)$ is a $2r$-pivot-perturbation of~$\mathcal G$.
\end{proof}

We do not know whether a direct translation of \cref{lem:vm-perturbation} is true for pivot-minors. However, we can prove a weaker lemma (\cref{lem:vm-perturbationPivot}), which is sufficient for our purposes.
For convenience, given a bipartite graph $\mathcal{G}=(G, A, B)$ and a set $X \subseteq V(G)$, we write $\mathcal{G}[X]$ for the bipartite graph $(G[X], A \cap X, B \cap X)$.
We call $\mathcal{G}[X]$ the subgraph of $\mathcal{G}$ \emph{induced on}~$X$.

We use a well-known matrix interpretation of pivoting; see, for instance, Moffatt~\cite{deltaMatroidSurvey}, or Geelen~\cite[Theorem~2.5]{generalizedTU} for details. Let \[ 
    M = \bordermatrix{
       & Y & V\setminus Y  \cr   
       Y & \alpha & \beta \cr  
       V\setminus Y & \gamma & \delta 
    }
    \] be a $V\times V$ matrix over a field. If $\alpha=M[Y]$ is nonsingular, then we define 
     \[ 
    M*Y = \bordermatrix{
       & Y & V\setminus Y  \cr   
       Y & \alpha^{-1} & \alpha^{-1}\beta \cr  
       V\setminus Y & -\gamma \alpha^{-1} & \delta-\gamma\alpha^{-1}\beta
    }.
    \]
    It is well known that if $G'$ is obtained from $G$ by a sequence of pivoting edges and $Y$ is the set of all vertices that are incident with an odd number of pivoted edges, then $M'=M*Y$, where we write $M$ and $M'$ for the adjacency matrices of $G$ and $G'$, respectively. (We always view adjacency matrices as being over $\GF(2)$.) Conversely, if $G$ is a graph with adjacency matrix $M$ and $X$ is a set of vertices in $G$, then $M*X$ is the adjacency matrix of a graph that is pivot-equivalent to $G$.

\begin{lemma}
\label{lem:vm-perturbationPivot}
    Let $\mathcal{G}=(G,A,B)$ be a bipartite graph, and let $X$ be a set of vertices of $G$. If $\mathcal{G}'=(G', A', B')$ is a bipartite graph which can be obtained from $\mathcal{G}$ by successively pivoting on edges with neither end in $X$, then $\mathcal{G}'[X]$ is a $\rho_G(X)$-pivot-perturbation of $\mathcal{G}[X]$.
\end{lemma}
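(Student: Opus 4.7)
My plan is to imitate the matrix-factorisation strategy used in the proof of \cref{lem:pertCutRankPivot}, but applied directly to the \emph{difference} $M'[X]-M[X]$, which turns out to have rank at most $\rho_G(X)$. This will let me absorb the pivot into $s\le\rho_G(X)/2$ carefully designed ``pivot-gadget'' pairs, giving an $\widehat{\mathcal{G}}$ on at most $|X|+\rho_G(X)$ vertices that contains both $\mathcal{G}[X]$ and $\mathcal{G}'[X]$ as pivot-minors.

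Let $M,M'$ denote the adjacency matrices of $G,G'$, set $r:=\rho_G(X)$, $X_A:=X\cap A$, and $X_B:=X\cap B$. Using the matrix interpretation of pivoting stated just before the lemma, $M'=M*Y$ for some $Y\subseteq V(G)\setminus X$ with $M[Y]$ nonsingular. Hence $U:=M'[X]-M[X]=M[X,Y]M[Y]^{-1}M[Y,X]$ has $\operatorname{rank}(U)\le\operatorname{rank}(M[X,V(G)\setminus X])=r$. Because $\mathcal{G}[X]$ and $\mathcal{G}'[X]$ are both bipartite with sides $(X_A,X_B)$, the matrix $U$ has vanishing diagonal blocks and is determined by its off-diagonal block $U_{AB}\in\GF(2)^{X_A\times X_B}$, with $2\operatorname{rank}(U_{AB})=\operatorname{rank}(U)\le r$. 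Setting $s:=\operatorname{rank}(U_{AB})$, I would fix a rank factorisation $U_{AB}=C\cdot D$ with $C\in\GF(2)^{X_A\times s}$ and $D\in\GF(2)^{s\times X_B}$, and build $\widehat{\mathcal{G}}$ by adjoining to $\mathcal{G}[X]$ the $2s$ new vertices $\{a_1',\dots,a_s',b_1',\dots,b_s'\}$ with each $a_i'\in\widehat A$ and each $b_i'\in\widehat B$, together with the edges of $\mathcal{G}[X]$, the matching $\{a_i'b_i':i\in[s]\}$, and, for each $i$, the edges joining $a_i'$ to $\{x\in X_B:D(i,x)=1\}$ and joining $b_i'$ to $\{x\in X_A:C(x,i)=1\}$ and no others. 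Then $\widehat{\mathcal{G}}$ is bipartite by construction, has $|X|+2s\le|X|+r$ vertices, and contains $\mathcal{G}[X]$ as a pivot-minor simply by deleting the $2s$ new vertices; I claim it also contains $\mathcal{G}'[X]$ as a pivot-minor, obtained by pivoting successively on $a_1'b_1',\dots,a_s'b_s'$ and then deleting the $2s$ new vertices.

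The main obstacle is verifying this last claim, which amounts to computing, via the matrix pivot formula, the total change to the $X\times X$ adjacency block after pivoting on $Y_{\mathrm{new}}:=\{a_i',b_i':i\in[s]\}$. Because the new vertices induce exactly the matching, the principal submatrix $\widehat M[Y_{\mathrm{new}}]$ is block-diagonal with $2\times2$ swap blocks and is self-inverse over $\GF(2)$; substituting into the pivot formula makes the cumulative update to the $X_A\times X_B$ block telescope to $\sum_{i=1}^{s}C(\cdot,i)D(i,\cdot)=CD=U_{AB}$, with symmetric update on $X_B\times X_A$ and zero update on each diagonal block (by bipartiteness). A small bookkeeping point: each single pivot on $a_i'b_i'$ swaps only the sides of $a_i'$ and $b_i'$ in the bipartition, so the sides of the $X$-vertices remain $(X_A,X_B)$ throughout the process. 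After deleting the new vertices, the induced bipartite graph on $X$ therefore has bipartition $(X_A,X_B)$ and adjacency $M[X]+U=M'[X]$, which is precisely $\mathcal{G}'[X]$.
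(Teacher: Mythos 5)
Your proof is correct and takes essentially the same approach as the paper's: both compute the change in the $X_A\times X_B$ adjacency block via the matrix pivot formula (your $U_{AB}$ is the paper's $\gamma^T\alpha^{-1}\beta$), observe it has small rank, and absorb it using $s$ pivot-gadget pairs built from a rank factorization (your $U_{AB}=CD$ corresponds to the paper's sum of rank-one matrices parametrized by sets $A_i,B_i$). Your derivation of $2s\le\rho_G(X)$ via $\rank(U)=2\rank(U_{AB})$ is a minor streamlining of the paper's argument, which instead uses $\rho_G(X)=\rank(\beta)+\rank(\gamma)$ and a WLOG on which of $\rank(\beta),\rank(\gamma)$ is smaller.
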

\begin{proof}
    Let $M$ and $M'$ denote the adjacency matrices of $G$ and $G'$, respectively. By taking the set of all vertices which are incident with an odd number of pivoted edges, we obtain a set $Y \subseteq V(G)\setminus X$ so that, if  
    \[ 
    M = \bordermatrix{
        & Y\cap A & Y\cap B & A\setminus Y & B\setminus Y  \cr   
        Y\cap A& 0 & \alpha & 0 & \beta \cr
        Y\cap B & \alpha^T & 0 & \gamma & 0 \cr
        A\setminus Y & 0 & \gamma^T & 0 & \delta \cr
        B\setminus Y & \beta^T & 0 & \delta^T & 0 
    },
    \] then
    \[ 
    M' = M*Y = \bordermatrix{
        & Y\cap A & Y\cap B & A\setminus Y & B\setminus Y  \cr   
        Y\cap A& 0 & (\alpha^{-1})^T & (\alpha^{-1})^T \gamma & 0 \cr
        Y\cap B & \alpha^{-1} & 0 & 0 & \alpha^{-1}\beta\cr
        A\setminus Y & -\gamma^T \alpha^{-1} & 0 & 0 & \delta -\gamma^T\alpha^{-1} \beta  \cr
        B\setminus Y & 0 & -\beta^T(\alpha^{-1})^T & \delta^T-\beta^T(\alpha^{-1})^T \gamma  & 0 
    }.
    \] 
    We may assume that $X\cup Y=V(G)$, because deleting vertices can not increase $\rho_G(X)$.

    Observe that $\rho_G(X)=\rank(\beta)+\rank(\gamma)$. 
    By symmetry between $A$ and $B$, we may assume that $\rank(\beta)\le \rank(\gamma)$.
    Let $r$ be the rank of $\gamma^T\alpha^{-1}\beta$, which is equal to the rank of its transpose $\beta^T(\alpha^{-1})^T\gamma$.
    Then $r\le \rank(\beta)$.
    By using the proof argument of \cref{lem:rank}, 
    we can write $\gamma^T\alpha^{-1}\beta$ as the sum of $r$ rank-$1$ matrices $M_1$, $M_2$, $\ldots$, $M_r$ over $\GF(2)$.
    For each $i\in [r]$, let $A_i$ be a subset of $A\setminus Y$ and $B_i$ be a subset of $B\setminus Y$ such that for all $a\in A\setminus Y$ and $b\in B\setminus Y$, an $(a,b)$-entry of $M_i$ is $1$ if and only if $a\in A_i$ and $b\in B_i$. 
    Now we construct a new bipartite graph $\widehat{\mathcal{G}}=(\widehat G,A\cup\{a_1,a_2,\ldots,a_r\}, B\cup \{b_1,b_2,\ldots,b_r\})$ as follows;
    \begin{itemize}
        \item $\widehat{\mathcal{G}}[X]=\mathcal G[X]$, 
        \item For each $i\in [r]$, 
    $a_i$ and $b_i$ are adjacent, 
    $a_i$ is adjacent to all vertices in $B_i$, and $b_i$ is adjacent to all vertices in $A_i$.
    \end{itemize}
    Then $G'[X]= \widehat G\pivot a_1b_1\pivot a_2b_2\cdots\pivot a_rb_r-\{a_1,a_2,\ldots,a_r,b_1,b_2,\ldots,b_r\}$.
    Therefore $\mathcal{G}'[X]$ is a $2r$-pivot-perturbation of $\mathcal{G}[X]$
    and $2r\le \rho(G)$.
\end{proof}

Next, we note that since \cref{lem:manyparts} only uses \cref{lem:pertProperties,lem:robustCommute,lem:pertCutRank}, we can prove a statement which is analogous to \cref{lem:manyparts} (but for pivot-minors of bipartite graphs) by applying \cref{lem:pertPropertiesPivot,lem:robustCommutePivot,lem:pertCutRankPivot} in their respective places. 

\begin{lemma}\label{lem:manypartsPivot}
    Let $k$, $t$, and $r$ be positive integers, and let $\mathcal H$ be a bipartite graph with no isolated vertices and with components $\mathcal H_1$, $\mathcal H_2$, $\ldots$, $\mathcal H_m$.
    Then for every bipartite graph $\mathcal G=(G,A,B)$ and every rank-decomposition~$T$ of~$G$ of width at most $r$, at least one of the following holds.
    \begin{enumerate}
    \item There exists $i \in [m]$ such that there is a $(4rmk+2tmk)$-pivot-perturbation of~$\mathcal G$ which has no pivot-minor strongly isomorphic to~$\mathcal H_i$.
    \item There exist pairwise vertex-disjoint subtrees $(T_{i,j}:i \in [m], j \in [k])$ of~$T$ such that for all $i\in [m]$ and $j\in [k]$, the graph $G(T_{i,j})$ is $t$-pivot-robust for $\mathcal H_i$.
    \end{enumerate}
\end{lemma}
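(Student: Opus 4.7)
The plan is to mimic the proof of \cref{lem:manyparts} almost verbatim, replacing each appeal to \cref{lem:pertProperties}, \cref{lem:robustCommute}, and \cref{lem:pertCutRank} by its bipartite counterpart \cref{lem:pertPropertiesPivot}, \cref{lem:robustCommutePivot}, and \cref{lem:pertCutRankPivot}, respectively. Concretely, for each $i \in [m]$ let $\mathcal{A}_i$ be the set of all subtrees $T'$ of $T$ such that the induced bipartite subgraph $\mathcal{G}[V(G(T'))]$ is $t$-pivot-robust for $\mathcal{H}_i$, and apply \cref{lem:tree}. If the first outcome of \cref{lem:tree} holds, then we obtain pairwise vertex-disjoint subfamilies $\mathcal{B}_i \subseteq \mathcal{A}_i$ of size $k$, which directly yields the second outcome of our lemma.

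Otherwise, there exist $i \in [m]$ and a set $X$ of fewer than $mk$ vertices of $T$ such that no subtree of $T - X$ lies in $\mathcal{A}_i$. As in the proof of \cref{lem:manyparts}, let $(V_1, V_2, \ldots, V_\ell)$ be the partition of $V(G)$ whose parts are the leaves of $T$ in each component of $T - X$ (together with singleton parts for leaves of $T$ that lie in $X$), and note $\ell \le 2mk$. Let $\mathcal{G}'$ be obtained from $\mathcal{G}$ by deleting all edges running between distinct parts. Applying \cref{lem:pertCutRankPivot} to isolate each leaf vertex of $T$ in $X \cap V(G)$ (cost at most $2$ per vertex), and then, as the remaining vertices of $X$ are deleted from $T$ one at a time, applying \cref{lem:pertCutRankPivot} twice per deletion to sever the edges between the three new components (each application costing at most $2r$, since the width of $T$ is at most $r$), and combining via \cref{lem:pertPropertiesPivot}, we conclude that $\mathcal{G}'$ is a $4rmk$-pivot-perturbation of $\mathcal{G}$. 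Bipartiteness is preserved throughout, since \cref{lem:pertCutRankPivot} only deletes edges and retains the bipartition $(A, B)$.

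For each $j \in [\ell]$, the bipartite graph $\mathcal{G}'[V_j]$ coincides with $\mathcal{G}[V_j]$ and is not $t$-pivot-robust for $\mathcal{H}_i$: if $V_j$ is a singleton, this is because $\mathcal{H}_i$ has at least two vertices (having no isolated vertex and being a component of $\mathcal{H}$, so having at least one edge), and otherwise because the corresponding component of $T - X$ is not in $\mathcal{A}_i$. Hence each $\mathcal{G}'[V_j]$ admits a $t$-pivot-perturbation $\mathcal{G}_j$ with no pivot-minor strongly isomorphic to $\mathcal{H}_i$. Let $\mathcal{G}''$ be the disjoint union of the $\mathcal{G}_j$, with bipartition inherited coordinatewise; by repeated application of \cref{lem:pertPropertiesPivot}, $\mathcal{G}''$ is a $(4rmk + t\ell) \le (4rmk + 2tmk)$-pivot-perturbation of $\mathcal{G}$. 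Because $\mathcal{H}_i$ is connected, any pivot-minor of $\mathcal{G}''$ strongly isomorphic to $\mathcal{H}_i$ would live inside a single component of $\mathcal{G}''$ and hence inside some $\mathcal{G}_j$, contradicting the choice of $\mathcal{G}_j$. This gives the first outcome.

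There is no substantive obstacle once the pivot-minor analogs of the three supporting lemmas are available; the only point to verify is that every construction stays within the category of bipartite graphs with their fixed bipartition, which is immediate from the statements of \cref{lem:pertCutRankPivot} and the fact that disjoint union of bipartite graphs is again bipartite. Thus the argument is formally identical to that of \cref{lem:manyparts}, with the three cited replacements.
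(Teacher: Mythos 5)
Your proof is correct and takes exactly the approach the paper intends: the paper simply states that the proof of \cref{lem:manyparts} carries over verbatim once \cref{lem:pertProperties}, \cref{lem:robustCommute}, and \cref{lem:pertCutRank} are replaced by their bipartite/pivot analogs \cref{lem:pertPropertiesPivot}, \cref{lem:robustCommutePivot}, and \cref{lem:pertCutRankPivot}, and you have spelled out precisely that substitution, including the check that all constructions stay within the class of bipartite graphs with fixed sides.
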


Then we can prove a statement which is analogous to \cref{prop:chain} by applying \cref{thm:wqoPivot,lem:vm-perturbationPivot} instead of \cref{thm:wqo} and \cref{lem:vm-perturbation}, respectively. It is essential to notice that, in the proof, when we obtain $G_{i,j+1}$ from $G_{i,j}$, we are only pivoting on edges with both ends in $Y_{i,j+1}$.

\begin{proposition}\label{prop:chainPivot}
    Let $k$ be a positive integer, let $t$, $r$ be non-negative integers, and let $\mathcal H$ be a bipartite graph with no isolated vertices and with components $\mathcal H_1$, $\mathcal H_2$, $\ldots$, $\mathcal H_m$. 
    Then there exist constants $c = c_{\mathcal H}(t,r)$ and $p=p_{\mathcal H}(k,t,r)$ such that for every bipartite graph $\mathcal G$ of rank-width at most~$r$, at least one of the following holds.
    \begin{enumerate}
    \item There exists $i \in [m]$ such that there is a $p$-pivot-perturbation of $\mathcal G$ which has no pivot-minor strongly isomorphic to $\mathcal H_i$.
    \item There exists a pivot-minor $\widetilde{\mathcal G}$ of $\mathcal G$ whose vertex set is the union of pairwise disjoint sets $(X_{i,j}: i \in [m], j \in [k])$ such that for all $i \in [m]$ and $j \in [k]$, we have $\abs{X_{i,j}}\leq c$, the graph $\widetilde{\mathcal G}[X_{i,j}]$ is $t$-pivot-robust for $\mathcal H_i$, and $\rho_{\widetilde{\mathcal G}}(X_{i,j})\leq r(m^2+1)$.
\end{enumerate}
\end{proposition}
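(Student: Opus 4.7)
The plan is to mirror the proof of \cref{prop:chain} step by step, replacing each tool for vertex-minors with its bipartite/pivot-minor counterpart. Specifically, we will use \cref{lem:manypartsPivot} instead of \cref{lem:manyparts}, \cref{thm:wqoPivot} instead of \cref{thm:wqo}, \cref{lem:vm-perturbationPivot} instead of \cref{lem:vm-perturbation}, and \cref{lem:robustCommutePivot} instead of \cref{lem:robustCommute}. Because \cref{lem:vm-perturbationPivot} gives a linear bound $\rho_G(X)$ rather than the exponential bound $2^{\rho_G(X)+1}$ of \cref{lem:vm-perturbation}, we may set the analog of the constant $\ell$ to be $\ell \coloneqq r(m^2+1)$, and then take $p \coloneqq 4rm(mk)^2 + 2(t+2\ell)m(mk)^2$, mirroring the original choice.

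To define $c$, first let $\mathcal{C}_i$ denote the class of all pivot-minor-minimal bipartite graphs of rank-width at most~$r$ that are strongly isomorphic to some bipartite graph which is $(t+\ell)$-pivot-robust for $\mathcal{H}_i$. Choose a subfamily $\mathcal{C}_i'$ with one representative per equivalence class under strong isomorphism and pivot equivalence; by \cref{thm:wqoPivot}, $\mathcal{C}_i'$ must be finite (otherwise one representative would be strongly isomorphic to a pivot-minor of another, contradicting pivot-minor-minimality). This yields a constant $c = c_{\mathcal{H}}(t,r)$ such that for each $i \in [m]$, every bipartite graph of rank-width at most~$r$ that is $(t+\ell)$-pivot-robust for $\mathcal{H}_i$ contains a pivot-minor on at most $c$ vertices that is still $(t+\ell)$-pivot-robust for $\mathcal{H}_i$.

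Now apply \cref{lem:manypartsPivot} to $\mathcal G$ together with a rank-decomposition of width at most $r$, with threshold $(mk)^2$ in place of $k$. Either the first outcome already gives a $p$-pivot-perturbation avoiding some $\mathcal{H}_i$, or we obtain pairwise vertex-disjoint subtrees $(T_{i,j} : i \in [m], j \in [(mk)^2])$ such that each $\mathcal{G}(T_{i,j})$ is $(t+2\ell)$-pivot-robust for $\mathcal{H}_i$. Contract each of these subtrees to a single vertex in the rank-decomposition and apply \cref{lem:subsets-of-tree} to obtain, after relabeling, sets $Y_{i,j}$ for $(i,j) \in [m]\times [k]$ whose cut-ranks in the induced subgraph $\widehat{\mathcal{G}}$ are at most $r(m^2+1)$, as in the original proof. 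Then, processing the pairs $(i,j)$ one at a time, reduce $Y_{i,j}$ to a subset $X_{i,j}$ of size at most $c$ by performing pivots whose edges have both ends in $Y_{i,j}$ and deleting the discarded vertices in $Y_{i,j}$, producing $\mathcal{G}_{i,j}$ in which $\mathcal{G}_{i,j}[X_{i,j}]$ is $(t+\ell)$-pivot-robust for $\mathcal{H}_i$. Finally, let $\widetilde{\mathcal{G}}$ be the subgraph of $\mathcal{G}_{m,k}$ induced on $\bigcup_{i,j} X_{i,j}$.

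The main subtlety, and the step I expect to be the main obstacle, lies in controlling the effect that reducing one $Y_{i,j}$ has on all of the other parts $Y_{i',j'}$. In the original vertex-minor proof, \cref{lem:vm-perturbation} applies to any pair of vertex-minors on a common vertex set, so no restriction on the local complementations is needed. In contrast, \cref{lem:vm-perturbationPivot} only yields a $\rho$-pivot-perturbation when the pivots used to pass from $\mathcal{G}$ to~$\mathcal{G}'$ have \emph{neither end in $X$}. Restricting each reduction of $Y_{i,j}$ to pivots on edges within $Y_{i,j}$ precisely ensures this hypothesis for every other part $Y_{i',j'}$ (since the $Y$'s are pairwise disjoint). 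Thus \cref{lem:vm-perturbationPivot} applied with $X = Y_{i',j'}$ shows that each intermediate graph's restriction to $Y_{i',j'}$ is an $\ell$-pivot-perturbation of the previous one, and \cref{lem:robustCommutePivot} then propagates the robustness through all reductions, so that in the end $\widetilde{\mathcal{G}}[X_{i,j}]$ is $t$-pivot-robust for $\mathcal{H}_i$, as required.
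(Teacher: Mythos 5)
Your proposal is correct and matches the paper's own approach: \cref{prop:chainPivot} is stated in the paper without a separate proof, being described as analogous to \cref{prop:chain} with \cref{thm:wqoPivot} and \cref{lem:vm-perturbationPivot} replacing \cref{thm:wqo} and \cref{lem:vm-perturbation}, and the paper flags exactly the subtlety you highlight, namely that the pivots used when passing from $\mathcal{G}_{i,j}$ to $\mathcal{G}_{i,j+1}$ are confined to $Y_{i,j+1}$, so that \cref{lem:vm-perturbationPivot} applies to every other part. One small imprecision worth fixing if you write this out in full: where you say ``each intermediate graph's restriction to $Y_{i',j'}$ is an $\ell$-pivot-perturbation of the previous one,'' the comparison should be made directly back to $\widehat{\mathcal{G}}[Y_{i',j'}]$ (as in the original proof of \cref{prop:chain}) rather than chained step by step across all $mk$ reductions, since chaining would accumulate a $k$-dependent perturbation; likewise for the final robustness of $\widetilde{\mathcal{G}}[X_{i,j}]$ you need the same device as in the vertex-minor proof, deleting $Y_{i,j}\setminus X_{i,j}$ first so that $\rho(X_{i,j})\le r(m^2+1)$ before invoking \cref{lem:vm-perturbationPivot} with $X=X_{i,j}$.
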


Finally, notice that in the proof of \cref{lem:cleanupchain}, we only locally complement when there is a large clique. So we never locally complement in bipartite graphs, which are closed under taking pivot-minors. 

\begin{lemma}\label{lem:cleanupchainPivot}
    Let $c$, $k$, and $\ell$ be integers with $0\le \ell < \binom{c}{2}+c$. Let $\mathcal G$ be a bipartite graph, and let $\mathcal{X}=(X_1, X_2, \ldots, X_{R_4(\inside)})$ be a $c$-uniform chain of length $R_4(\inside)$ in $G$ such that the first~$\ell$ pairs in $\{(j_1,j_2):1\leq j_1\leq j_2\leq c\}$ with respect to the lexicographic order are fixed. Then 
    there exist a bipartite graph $\widetilde{\mathcal G}$ and a subchain $\mathcal{Y}$ of $\mathcal{X}$ of length $k$ such that
    \begin{itemize}
        \item $\widetilde{\mathcal G}$ is obtained from $\mathcal G$ by applying pivots on edges with both ends in $V(\mathcal{X})\setminus V(\mathcal{Y})$,
        \item the first $\ell+1$ pairs in the lexicographic order are fixed.
    \end{itemize}
\end{lemma}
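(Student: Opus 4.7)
The plan is to mirror the proof of Lemma~\ref{lem:cleanupchain} step by step, replacing its single use of local complementation -- which is not an operation on bipartite graphs -- with a pigeonhole on the sides of the bipartition. Let $(j_1, j_2)$ with $1 \leq j_1 \leq j_2 \leq c$ be the first pair in lexicographic order that is not yet fixed with respect to $\mathcal{X}$ and $\mathcal{G} = (G, A, B)$.

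The only case of Lemma~\ref{lem:cleanupchain} that uses local complementation is the diagonal case $j_1 = j_2$, where a $2$-color Ramsey argument produces a subchain on which $\mathcal{X}(j_1)$ is either independent or a clique, and a local complementation destroys the clique. In the bipartite setting I would instead observe that $\mathcal{X}(j_1)$ consists of $R_4(3k) \geq 2k$ vertices of $\mathcal{G}$, so by pigeonhole at least $k$ of them lie on a common side of the bipartition; restricting to the corresponding length-$k$ subchain $\mathcal{Y}$ places $\mathcal{Y}(j_1)$ inside one bipartition class, which is independent, so $(j_1, j_1)$ is fixed. I take $\widetilde{\mathcal{G}} = \mathcal{G}$ (no pivots), and restriction preserves the fixed-ness of the first $\ell$ pairs. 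In the case $j_1 < j_2$, I would run exactly the same $4$-color Ramsey argument as in Lemma~\ref{lem:cleanupchain} to obtain a subchain $\mathcal{Z}$ of length $3k$ on which $(j_1, j_2)$ has one of the four coupling types; the fixed case is handled by restriction alone. In the other three cases, every edge used as a pivot in the proof of Lemma~\ref{lem:cleanupchain} -- the single edge $X_{i_1}(j_1) X_{i_2}(j_2)$ for complete couple, and the sequence $Z_1(j_2) Z_3(j_1), Z_4(j_2) Z_6(j_1), \ldots$ (and the symmetric sequence in the down-coupled case) for the half graphs -- is, by the very definition of the coupling type, an edge of $\mathcal{G}$. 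Since $\mathcal{G}$ is bipartite, each such edge automatically goes between $A$ and $B$, so pivoting on it is a legitimate bipartite operation and preserves bipartiteness (the two pivoted vertices exchange sides and are then removed from the chain). Performing the same sequence of pivots as in Lemma~\ref{lem:cleanupchain} yields the desired length-$k$ subchain $\mathcal{Y}$ of $\mathcal{X}$ and a bipartite $\widetilde{\mathcal{G}}$ in which $(j_1, j_2)$ is fixed.

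The safety argument of Lemma~\ref{lem:cleanupchain} transfers verbatim, since pivoting on $uv$ only alters adjacencies within $N(u) \cup N(v)$: the already-fixed earlier pairs guarantee that the relevant vertices of $\mathcal{Y}$ either lie outside $N(u) \cup N(v)$ (when $j_1' < j_1$) or share a $\{u, v\}$-neighborhood with their partners (when $j_1' = j_1$ and $j_2' < j_2$), so each previously fixed pair survives each pivot. The main and essentially only obstacle in adapting the proof is the diagonal case, since local complementation is not available on bipartite graphs; the pigeonhole on the bipartition above is what removes that obstacle, and the remainder of the argument is a direct bipartite specialization of Lemma~\ref{lem:cleanupchain}.
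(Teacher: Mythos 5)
Your proof is correct and follows essentially the same strategy the paper intends: the paper's own "proof'' of this lemma is the one-line remark preceding the statement, namely that in the argument for \cref{lem:cleanupchain} a local complementation is only performed when a large clique is found in the diagonal case, which cannot happen in a bipartite graph. You substitute a pigeonhole on the bipartition classes for the paper's ``no $K_{k+1}$ exists'' observation, which achieves the same effect (and in fact handles the degenerate case $k=1$ slightly more cleanly, since a monochromatic $K_2$ is a legal clique in a bipartite graph even though the lemma is vacuous there); the handling of the three coupled cases via pivots on the ``coupling edges,'' the verification that pivoting preserves bipartiteness, and the safety argument for previously fixed pairs all transfer exactly as you say. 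This is the same approach as the paper with a minor stylistic variation in the diagonal case.
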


Now we obtain the following proposition as a corollary of \cref{lem:cleanupchainPivot}.
Given a bipartite graph $\mathcal{H}$ and a positive integer $k$, we write $k\mathcal{H}$ for the bipartite graph which is obtained by taking $k$ disjoint copies of $\mathcal{H}$, where we take the union of the respective sides.

\begin{proposition}\label{prop:cleaningChainsPivot}
    For any positive integers $c$ and $k$, and any bipartite graph $\mathcal H$ with components $\mathcal H_1$, $\mathcal H_2$, $\ldots$, $\mathcal H_m$, there exists an integer $K = K_{\mathcal H}(c,k)$ so that the following holds. Let $\mathcal G$ be a bipartite graph whose vertex set is the union of pairwise disjoint sets $(X_{i,j}: i\in [m], j \in [K])$ such that for all $i \in [m]$ and $j \in [k]$, the set $X_{i,j}$ has size at most $c$, the bipartite graph $\mathcal G[X_{i,j}]$ is $r(m^3+m)$-pivot-robust for $\mathcal H_i$, and $\rho_{\mathcal G}(X_{i,j})\leq r(m^2+1)$. Then $\mathcal G$ contains a pivot-minor strongly isomorphic to $k\mathcal H$.
\end{proposition}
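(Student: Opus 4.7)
The plan is to follow the proof of Proposition~\ref{prop:cleaningChains} step by step, replacing each ingredient by its pivot analog: Lemma~\ref{lem:cleanupchain} by Lemma~\ref{lem:cleanupchainPivot}, Lemma~\ref{lem:vm-perturbation} by Lemma~\ref{lem:vm-perturbationPivot}, and Lemma~\ref{lem:robustCommute} by Lemma~\ref{lem:robustCommutePivot}. Since Lemma~\ref{lem:cleanupchainPivot} only pivots on edges and never locally complements, every intermediate graph remains bipartite, and strong isomorphism is preserved throughout.

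First I would, for each $j\in[K]$, concatenate arbitrary orderings of $X_{1,j},X_{2,j},\ldots,X_{m,j}$ into an ordered set of size at most $cm$, and assemble the chain $\mathcal{X}=(X_1,\ldots,X_K)$. Pigeonholing on the tuples $(\abs{X_{1,j}},\ldots,\abs{X_{m,j}})$ and absorbing the loss into $K$, I may pass to a $C$-uniform subchain for some $C\le cm$. I would then apply Lemma~\ref{lem:cleanupchainPivot} iteratively at most $\binom{C}{2}+C$ times, starting from a chain long enough to survive the iterated Ramsey blow-up, so as to end with a subchain $\mathcal{Y}=(Y_1,\ldots,Y_{km})$ of length $km$ and a bipartite graph $\widetilde{\mathcal{G}}$ in which every pair $(j_1,j_2)$ with $1\le j_1\le j_2\le C$ is fixed. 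Crucially, $\widetilde{\mathcal{G}}$ is obtained from $\mathcal{G}$ by pivoting only on edges with both ends in $V(\mathcal{X})\setminus V(\mathcal{Y})$, and the fixedness of every pair forces $\widetilde{\mathcal{G}}[V(\mathcal{Y})]$ to decompose as the vertex-disjoint union of the $\widetilde{\mathcal{G}}[Y_j]$.

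After re-indexing so that each $Y_j$ with $j\in[km]$ contains sets $X_{1,j},\ldots,X_{m,j}$, I would invoke Lemma~\ref{lem:vm-perturbationPivot}: the pivots used to form $\widetilde{\mathcal{G}}$ have neither endpoint in $X_{i,j}\subseteq Y_j$, so $\widetilde{\mathcal{G}}[X_{i,j}]$ is an $r(m^2+1)$-pivot-perturbation of $\mathcal{G}[X_{i,j}]$. Since $r(m^3+m)\ge r(m^2+1)$, Lemma~\ref{lem:robustCommutePivot} together with the pivot-robustness of $\mathcal{G}[X_{i,j}]$ shows that $\widetilde{\mathcal{G}}[X_{i,j}]$ is $0$-pivot-robust for $\mathcal{H}_i$, so it contains a pivot-minor strongly isomorphic to $\mathcal{H}_i$. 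Partitioning $[km]$ into $k$ blocks of $m$ and using the $i$th member of each block to supply $\mathcal{H}_i$ then exhibits $k\mathcal{H}$ as a pivot-minor of $\widetilde{\mathcal{G}}$, and hence of $\mathcal{G}$.

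The main technical care I anticipate lies in the bipartite bookkeeping: at each step I have to verify that the tuple structure $(G,A,B)$ is respected so that the final copy of $k\mathcal{H}$ is strongly isomorphic, rather than merely isomorphic as an abstract graph. This is precisely what forces us to work with Lemma~\ref{lem:vm-perturbationPivot} (with its hypothesis that the pivoted edges are disjoint from $X$) in place of the blanket Lemma~\ref{lem:vm-perturbation}; unlike the vertex-minor case, one cannot freely take pivot-minors inside an arbitrary induced subgraph after pivoting globally. The resulting $K=K_{\mathcal{H}}(c,k)$ will be a Ramsey-type tower of height $\binom{cm}{2}+cm$, just as in Proposition~\ref{prop:cleaningChains}.
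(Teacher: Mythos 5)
Your proof is correct and follows the same blueprint as the paper's: pigeonhole to a uniform chain, iterate \cref{lem:cleanupchainPivot} to obtain a pivot-equivalent $\widetilde{\mathcal{G}}$ and a long subchain $\mathcal{Y}$ in which all pairs are fixed (so the $\widetilde{\mathcal{G}}[Y_j]$ are pairwise anticomplete), and then use \cref{lem:vm-perturbationPivot} together with \cref{lem:robustCommutePivot} to recover, inside each surviving group, a pivot-minor strongly isomorphic to the right component of~$\mathcal{H}$.

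One place where your write-up is actually a bit tighter than the paper's is the robustness-transfer step. You apply \cref{lem:vm-perturbationPivot} directly to the small sets $X_{i,j}$: since the pivots from \cref{lem:cleanupchainPivot} avoid $V(\mathcal{Y})\supseteq X_{i,j}$ entirely and $\rho_{\mathcal{G}}(X_{i,j})\le r(m^2+1)\le r(m^3+m)$, the graph $\widetilde{\mathcal{G}}[X_{i,j}]$ is an $r(m^2+1)$-pivot-perturbation of $\mathcal{G}[X_{i,j}]$, and the hypothesis that $\mathcal{G}[X_{i,j}]$ is $r(m^3+m)$-pivot-robust for $\mathcal{H}_i$ can be used verbatim via \cref{lem:robustCommutePivot}. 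The paper instead applies \cref{lem:vm-perturbationPivot} to the whole group $Y_j$ (whose cut-rank is only bounded by $r(m^3+m)$ via submodularity) and then asserts that $\mathcal{G}[X_j]$ is $r(m^3+m)$-pivot-robust for $\mathcal{H}_i$ ``because $X_{i,j}\subseteq X_j$''. That assertion --- that $t$-pivot-robustness lifts from an induced subgraph to its parent --- is not proved in the paper and is not obviously true, since a $t$-pivot-perturbation of $\mathcal{G}[X_j]$ need not restrict to a $t$-pivot-perturbation of $\mathcal{G}[X_{i,j}]$. Your version of the final step sidesteps this entirely, so your proposal is, if anything, a small improvement on the paper's write-up while otherwise being the same argument.
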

\begin{proof}
    For each $j\in [K]$, there are at most $cm$ possibilities for $\sum_{i \in [m]} \abs{X_{i,j}}$.
    So by the pigeonhole principle, and thinking of $K$ as any sufficiently large integer, we may assume that this sum of sizes is the same for every $j \in [K]$. Now, for each $j\in [K]$, we set $X_j = \bigcup_{i \in [m]}X_{i,j}$, and we arbitrarily order $X_j$. Let $\mathcal{X}=(X_1, X_2, \ldots, X_K)$. Then $\mathcal{X}$ is a chain of length~$K$. 
    
    Note that $\mathcal{X}$ is $C$-uniform for some positive integer $C \leq mc$. Also note that for every $i\in [m]$ and $j\in [K]$, the graph $\mathcal G[X_{j}]$ is $r(m^3+m)$-pivot-robust for $\mathcal H_i$ because $X_{i,j}\subseteq X_j$. 
    Moreover, for every $j\in [K]$, we have $\rho_{\mathcal G}(X_{j})\le r(m^3+m)$ by the submodularity of the cut-rank function. By applying \cref{lem:cleanupchainPivot} at most $\binom{C}{2}+C$ times to~$\mathcal X$, we obtain a bipartite graph~$\widetilde{\mathcal G}$ that is pivot-equivalent to~$\mathcal G$ 
    and a subchain $\mathcal{Y}=(Y_1,Y_2,\ldots,Y_{km})$ of~$\mathcal{X}$ of length~$km$ 
    such that every pair $(j_1, j_2)$ with $1\le j_1\le j_2\le C$ is fixed with respect to~$\mathcal{Y}$ and~$\widetilde{\mathcal G}$.
    This means that $Y_j$ has no edges to $Y_{j'}$ for any distinct $j,j' \in [km]$.
    For each $j\in [km]$, the graph $\widetilde{\mathcal G}[Y_j]$ is an $r(m^3+m)$-pivot-perturbation of $\mathcal G[Y_j]$ by \cref{lem:vm-perturbationPivot}.
    Thus, by \cref{lem:robustCommutePivot}, the graph~$\widetilde{\mathcal G}[Y_j]$ is $0$-pivot-robust for $\mathcal H_i$ for each $i \in [m]$.
    In particular, $\widetilde{\mathcal G}[Y_j]$ has each of $\mathcal H_1$, $\mathcal H_2$, $\ldots$, $\mathcal H_m$ as a pivot-minor.
    It follows that $\widetilde{\mathcal G}$, and therefore $\mathcal G$, contains a pivot-minor strongly isomorphic to $k\mathcal H$.
\end{proof}

The rest of the proof proceeds in exactly the same way as the one for \cref{thm:main}, yielding the following theorem.

\begin{theorem}
\label{thm:mainPivot}
    For any bipartite circle graph $\mathcal{H}$ with at least one edge and for any positive integer $k$, there exists an integer $p_1=p_1(k,\mathcal{H})$ so that each bipartite graph $\mathcal{G}$ either has a pivot-minor strongly isomorphic to $k\mathcal{H}$, or has a $p_1$-pivot-perturbation $\widetilde{\mathcal{G}}$ which has no pivot-minor strongly isomorphic to $\mathcal{H}$.
\end{theorem}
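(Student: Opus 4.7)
The plan is to mirror the proof of \cref{thm:main} from \cref{sec:conclusion} step by step, substituting each ingredient by its pivot-minor counterpart developed in \cref{sec:pivots}.

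First, I would reduce to the case that $\mathcal{H}$ has no isolated vertices. Write $\mathcal{H}$ as the disjoint union of a bipartite graph $\mathcal{H}'$ with no isolated vertices together with $a$ isolated vertices on the $A$-side and $b$ isolated vertices on the $B$-side. Since $\mathcal{H}$ has at least one edge, $\mathcal{H}'$ is nonempty and contains a component with vertices on both sides. Recursively set $p_1(k,\mathcal{H}) \coloneqq p_1(k+k(a+b),\mathcal{H}')$. If $\mathcal{G}$ has a pivot-minor strongly isomorphic to $(k+k(a+b))\mathcal{H}'$, we keep $k$ of these copies intact and from each of the remaining $k(a+b)$ copies delete all but one vertex, chosen to lie on the appropriate side, yielding $k\mathcal{H}$ as a pivot-minor; otherwise, the resulting $p_1$-pivot-perturbation avoiding $\mathcal{H}'$ also avoids $\mathcal{H}$, since a strong copy of $\mathcal{H}$ would induce a strong copy of $\mathcal{H}'$.

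Next, assume $\mathcal{H}$ has no isolated vertices, and suppose $\mathcal{G}$ is a bipartite graph with no pivot-minor strongly isomorphic to $k\mathcal{H}$. Because $k\mathcal{H}$ is again a bipartite circle graph (circle graphs are closed under disjoint union, and bipartiteness is preserved), \cref{thm:gridPivot} gives an integer $r \coloneqq r_1(k\mathcal{H})$ bounding the rank-width of $\mathcal{G}$. Let $\mathcal{H}_1, \mathcal{H}_2, \ldots, \mathcal{H}_m$ be the components of~$\mathcal{H}$, and set $\ell \coloneqq r(m^3+m)$, $c \coloneqq c_{\mathcal{H}}(\ell, r)$ from \cref{prop:chainPivot}, $K \coloneqq K_{\mathcal{H}}(c, k)$ from \cref{prop:cleaningChainsPivot}, and finally $p_1 \coloneqq p_{\mathcal{H}}(K, \ell, r)$ from \cref{prop:chainPivot}. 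I would then apply \cref{prop:chainPivot} to $\mathcal{G}$ with these parameters. Either outcome~(1) produces a $p_1$-pivot-perturbation of $\mathcal{G}$ avoiding some component $\mathcal{H}_i$ as a pivot-minor---and thereby avoiding $\mathcal{H}$---finishing the proof; or outcome~(2) produces a pivot-minor $\widetilde{\mathcal{G}}$ of $\mathcal{G}$ whose vertex set decomposes into pairwise disjoint sets $(X_{i,j}: i\in[m],\, j\in[K])$ satisfying the size, robustness, and cut-rank bounds required by \cref{prop:cleaningChainsPivot}.

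In the latter case, \cref{prop:cleaningChainsPivot} applied to $\widetilde{\mathcal{G}}$ yields a pivot-minor of $\widetilde{\mathcal{G}}$, and therefore of $\mathcal{G}$, strongly isomorphic to $k\mathcal{H}$, contradicting the standing assumption. The heart of the argument is entirely in place by \cref{sec:pivots}, and no new combinatorial idea appears to be needed. The main obstacle is simply bookkeeping: one must verify at every step that the bipartition sides are respected so that the strong-isomorphism hypothesis is preserved, and in particular that the isolated-vertex reduction produces singletons on the correct side---this is why we allocate $k(a+b)$ extra copies of $\mathcal{H}'$ rather than merely $ka$ or $kb$.
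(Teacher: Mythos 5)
Your proposal follows the paper's proof essentially verbatim: the same reduction to the isolated-vertex-free case (the paper also sets the bound via $k+km$ extra copies, where $m$ is the number of isolated vertices), the same application of \cref{thm:gridPivot} to bound rank-width, the same choice of constants $\ell = r(m^3+m)$, $c$, $K$, $p_1$, and the same two-case dichotomy via \cref{prop:chainPivot} followed by \cref{prop:cleaningChainsPivot}. Your treatment of the isolated-vertex reduction is slightly more explicit about tracking bipartition sides, which is a worthwhile detail the paper leaves implicit.
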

\begin{proof}
    First, suppose that $\mathcal H$ has $m$ isolated vertices for some $m>0$. Let $\mathcal H'$ be the graph obtained from $\mathcal H$ by removing all isolated vertices. We may take $t(k,\mathcal H)\coloneqq t(k+km,\mathcal H')$.

    Therefore, we may assume that $\mathcal H$ has no isolated vertices. Let $\mathcal G$ be a bipartite graph; we may assume that $\mathcal G$ has no pivot-minor strongly isomorphic to~$k\mathcal H$. 
    By~\cref{thm:gridPivot}, there exists $r\coloneqq r_1(k\mathcal H)$ such that every bipartite graph with no pivot-minor strongly isomorphic to~$k\mathcal H$ has rank-width at most $r$. 
    (Note that $k\mathcal H$ is a bipartite circle graph since circle graphs are closed under taking disjoint unions.) Thus, the rank-width of~$\mathcal G$ is at most~$r$.

    Let $\mathcal H_1$, $\mathcal H_2$, $\ldots$, $\mathcal H_m$ be the components of $\mathcal H$. 
    Let $c_{\mathcal H}$ and $p_{\mathcal H}$ be the functions given by \cref{prop:chainPivot}, and let $K_{\mathcal H}$ be the function given by \cref{prop:cleaningChainsPivot}. Set $c \coloneqq c_{\mathcal H}(r(m^3+m),r)$, $K\coloneqq K_{\mathcal H}(c,k)$, and $p_1 \coloneqq p_{\mathcal H}(K,r(m^3+m), r)$. By \cref{prop:chainPivot}, at least one of the following holds.
     \begin{enumerate}
    \item There exists $i \in [m]$ such that there is a $p_1$-pivot-perturbation of~$\mathcal G$ which has no pivot-minor strongly isomorphic to $\mathcal H_i$.
    \item There exist 
    a pivot-minor $\widetilde{\mathcal G}$ of~$\mathcal G$ whose vertex set is the union of pairwise disjoint sets $(X_{i,j}: i\in [m], j \in [K])$ such that for all $i\in [m]$ and $j\in [K]$, we have $\abs{X_{i,j}}\le c$, the graph $\widetilde{\mathcal G}[X_{i,j}]$ is $r(m^3+m)$-pivot-robust for $\mathcal H_i$, 
    and $\rho_{\widetilde{\mathcal G}}(X_{i,j})\le r(m^2+1)$. 
    \end{enumerate}
    In the first case, we have found the desired pivot-perturbation and are done. In the second case, $\mathcal G$ contains a pivot-minor strongly isomorphic to $k\mathcal H$ by \cref{prop:cleaningChainsPivot}, and again we are done.
\end{proof}


We can also prove a rough converse to \cref{thm:mainPivot} using \cref{lem:bouchetPivot} below, which is analogous to \cref{lem:bouchet3Ways} but for pivot-minors. 
A similar statement holds for non-bipartite graphs; see Dabrowski et al.~{\cite[Lemma 2.2]{DDJKKOP2023}}. The version stated below follows from \cref{lem:pivotFundMatroid} and the fact that for any matroid $M$, minor $N$ of $M$, and element $e \in E(M)\setminus E(N)$, the matroid $N$ is a minor of at least one of $M/e$ and $M\setminus e$. 
Given a bipartite graph~$\mathcal{G}$ and a vertex $v$ of $\mathcal{G}$, we define $\mathcal{G}-v$ and $\mathcal{G}/v$ in the same way as for non-bipartite graphs. 

\begin{lemma}[Dabrowski et al.~{\cite[Lemma 2.2]{DDJKKOP2023}}]
\label{lem:bouchetPivot}
    If $\mathcal{H}$ and $\mathcal{G}$ are bipartite graphs so that $\mathcal{H}$ is a pivot-minor of $\mathcal{G}$, and if $v$ is a vertex of~$\mathcal{G}$ not in~$\mathcal{H}$, then $\mathcal{H}$ is a pivot-minor of at least one of $\mathcal{G}-v$ and $\mathcal{G}/v$. 
\end{lemma}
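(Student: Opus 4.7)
The plan is to translate the statement into a question about binary matroids via \cref{lem:pivotFundMatroid}, and then invoke the standard matroid-theoretic fact (already quoted in the paragraph preceding the lemma) that for any matroid $M$, any minor $N$ of $M$, and any element $e \in E(M) \setminus E(N)$, the matroid $N$ is a minor of at least one of $M/e$ and $M \setminus e$.

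First I would pick a binary matroid $M$ together with a base $B$ so that $\mathcal{G}$ is strongly isomorphic to the fundamental graph $\mathcal{F}(M, B)$; such $M$ and $B$ exist since every bipartite graph arises as a fundamental graph (read off the representation $[I_B \mid A]$ where $A$ is the bipartite adjacency matrix). By \cref{lem:pivotFundMatroid}, the hypothesis that $\mathcal{H}$ is a pivot-minor of $\mathcal{G}$ translates to the existence of a binary matroid $N$ with a base $B_N$ such that $N$ is isomorphic to a minor of $M$ and $\mathcal{H}$ is strongly isomorphic to $\mathcal{F}(N, B_N)$. Let $e \in E(M) \setminus E(N)$ be the element corresponding to~$v$. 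Applying the matroid fact, $N$ is a minor of $M/e$ or $M \setminus e$.

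The key remaining step is to match $\{\mathcal{G} - v, \mathcal{G}/v\}$ with $\{M/e, M \setminus e\}$ at the level of fundamental graphs. If $v \in B$, then deleting the row and column of $e$ in $[I_B \mid A]$ gives a representation of $M/e$ with respect to the base $B \setminus \{e\}$ whose fundamental graph is exactly $\mathcal{G} - v$; and, picking any $f \in E(M) \setminus B$ adjacent to $e$ in $\mathcal{G}$ (such $f$ exists unless $e$ is a coloop, in which case $M/e = M \setminus e$ and $\mathcal{G}/v = \mathcal{G}-v$, so both conclusions coincide), a base exchange yields a representation of $M \setminus e$ with respect to $(B \setminus \{e\}) \cup \{f\}$ whose fundamental graph is $\mathcal{G} \times vw - v = \mathcal{G}/v$, where $w$ is the vertex of $\mathcal{G}$ corresponding to $f$. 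The case $v \in E(M) \setminus B$ is symmetric, swapping the roles of contraction and deletion. A final application of \cref{lem:pivotFundMatroid} then delivers $\mathcal{H}$ as a pivot-minor of $\mathcal{G} - v$ or $\mathcal{G}/v$.

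The only mildly delicate point is the bookkeeping in the previous paragraph -- keeping track of which side of the bipartition $v$ lies on and how this interacts with the pivot-based definition of $\mathcal{G}/v$. This amounts to the standard dictionary between graph pivoting and binary-matroid base exchange, so I do not expect a substantive obstacle.
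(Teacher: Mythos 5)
Your proposal is correct and follows exactly the approach the paper itself sketches: translate via \cref{lem:pivotFundMatroid} to binary matroids, apply the elementary fact that a minor $N$ of $M$ with $e\in E(M)\setminus E(N)$ survives in $M/e$ or $M\setminus e$, and translate back, using the dictionary that deleting a vertex of $\mathcal{F}(M,B)$ on the basis side (resp.\ cobasis side) corresponds to contraction (resp.\ deletion), while pivoting corresponds to base exchange. The only thing worth tightening is that \cref{lem:pivotFundMatroid} is stated up to (strong) isomorphism, whereas your argument needs an honest labelled minor $N$ of $M$ on ground set $V(\mathcal{H})\subseteq E(M)$ so that ``$e$ corresponds to $v$'' is meaningful; this is fine because the underlying correspondence is label-preserving, but it deserves a sentence.
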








\noindent Using \cref{lem:bouchetPivot} instead of \cref{lem:bouchet3Ways}, we can obtain the following rough converse to \cref{thm:mainPivot}. The proof follows the same approach as in \cref{prop:converse}, and we omit it. 

\begin{proposition}
\label{prop:conversePivot}
    For any bipartite graph $\mathcal{H}$ and non-negative integer $p$, if $\mathcal{G}$ is a bipartite graph with a $p$-pivot-perturbation $\widetilde{\mathcal{G}}$ with no pivot-minor strongly isomorphic to $\mathcal{H}$, then $\mathcal{G}$ has no pivot-minor strongly isomorphic to $(p+1)\mathcal{H}$.
\end{proposition}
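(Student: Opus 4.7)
The plan is to mirror the inductive argument used for Proposition \ref{prop:converse}, substituting Lemma \ref{lem:bouchetPivot} for Lemma \ref{lem:bouchet3Ways} and carefully tracking the bipartite structure. I would proceed by induction on $p$. For the base case $p = 0$, the definition forces $\mathcal{G}$ and $\widetilde{\mathcal{G}}$ to be pivot-equivalent, since a bipartite graph on $\abs{V(\mathcal{G})}$ vertices containing both as pivot-minors can only do so by pivoting (no deletions are available). Hence any pivot-minor strongly isomorphic to $\mathcal{H}$ in $\mathcal{G}$ also appears in $\widetilde{\mathcal{G}}$, which gives the desired conclusion that $\mathcal{G}$ has no pivot-minor strongly isomorphic to $\mathcal{H} = 1\cdot\mathcal{H}$.

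For the inductive step, suppose for contradiction that $\mathcal{G}$ has a pivot-minor strongly isomorphic to $(p+1)\mathcal{H}$. Let $\widehat{\mathcal{G}}$ be a bipartite graph on $\abs{V(\mathcal{G})} + p$ vertices containing both $\mathcal{G}$ and $\widetilde{\mathcal{G}}$ as pivot-minors. By first pivoting within $\widehat{\mathcal{G}}$ (which preserves bipartiteness), I may assume that $(p+1)\mathcal{H}$ is strongly isomorphic to an induced subgraph of $\widehat{\mathcal{G}}$ whose vertex set is contained in $V(\mathcal{G})$. Pick any $v \in V(\widehat{\mathcal{G}}) \setminus V(\widetilde{\mathcal{G}})$. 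By Lemma \ref{lem:bouchetPivot}, $\widetilde{\mathcal{G}}$ is a pivot-minor of $\widehat{\mathcal{G}} - v$ or of $\widehat{\mathcal{G}}/v$, so $\widetilde{\mathcal{G}}$ is a $(p-1)$-pivot-perturbation of one of the restrictions $(\widehat{\mathcal{G}}-v)[V(\mathcal{G})]$ or $(\widehat{\mathcal{G}}/v)[V(\mathcal{G})]$.

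To reach a contradiction via the inductive hypothesis applied at $p-1$, it suffices to exhibit $p\mathcal{H}$ as a pivot-minor of the chosen restriction. If the chosen operation is $\widehat{\mathcal{G}} - v$, the copy of $(p+1)\mathcal{H}$ survives unchanged as an induced subgraph of the restriction, and hence contains $p\mathcal{H}$. If the chosen operation is $\widehat{\mathcal{G}}/v$, there are two subcases: when $v$ has no neighbor in the fixed copy of $(p+1)\mathcal{H}$, that copy is again preserved as an induced subgraph; when $v$ has some neighbor $u$ in the copy, the single pivot on $uv$ defining $\widehat{\mathcal{G}}/v$ affects only the component of $(p+1)\mathcal{H}$ containing $u$, so deleting that component leaves the remaining $p$ components undisturbed as an induced subgraph, witnessing $p\mathcal{H}$ as a pivot-minor.

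The main obstacle is the bipartite bookkeeping. Unlike the unrestricted setting of Proposition \ref{prop:converse}, here we must verify that $\widehat{\mathcal{G}}$ can be chosen bipartite with sides compatible with those of $\mathcal{G}$ and $\widetilde{\mathcal{G}}$, that vertex deletion and the $\widehat{\mathcal{G}}/v$ construction respect the bipartition (recall that pivoting swaps the labels of its two endpoints), and that the embedded copy of $(p+1)\mathcal{H}$ is embedded as a \emph{strongly} isomorphic induced sub-bipartite-graph so that the remaining $p$ components continue to exhibit the required side structure. These checks are routine but must be made explicit; once they are in place, the induction concludes exactly as in Proposition \ref{prop:converse}.
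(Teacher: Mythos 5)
Your proof is correct and follows the same approach the paper indicates (induction on $p$, substituting \cref{lem:bouchetPivot} for \cref{lem:bouchet3Ways} in the argument of \cref{prop:converse}); the paper itself omits the proof for exactly this reason. One small imprecision worth noting: the pivot on $uv$ can in fact alter edges between the component of $u$ and the other components, or between those other components and vertices outside the copy, so it is not quite right to say it ``affects only the component containing $u$'' --- what actually matters, and what you correctly conclude, is that it leaves unchanged the induced subgraph on the union of the components not containing $u$, since none of those vertices is adjacent to $u$ and pivoting does not touch edges among vertices not adjacent to $u$.
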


\section{Minors of binary matroids}
\label{sec:matroids}

In this section we prove \cref{cor:binaryMatroids} and its rough converse, \cref{prop:binMatConverse}.

Let $M$ be a binary matroid. Given a non-negative integer~$p$, we say that a binary matroid $\widetilde{M}$ is a \emph{rank\nobreakdash-$p$ perturbation} of $M$ if $E(M)=E(\widetilde{M})$ and there exist matrices $A$ and $\widetilde{A}$ over~$\GF(2)$ which represent $M$ and $\widetilde{M}$, respectively, such that $A$ and $\widetilde{A}$ have the same set of row indices, and the rank of $A-\widetilde{A}$ is at most~$p$. 
We note that rank-$p$ perturbations of graphs are very different from rank-$p$ perturbations of their cycle matroids. Intuitively, this is because the first definition operates in the adjacency matrix while the second operates in the incidence matrix.

We need the bipartite Ramsey theorem.

\begin{theorem}[Bipartite Ramsey theorem; Beineke and Schwenk~\cite{BS1976}]\label{thm:bipRamsey}
    For any positive integers $k$ and $\ell$, there exists a positive integer $N=B(k,\ell)$ such that every $2$-coloring of the edges of the complete bipartite graph $K_{N,N}$ contains a monochromatic $K_{k,\ell}$ subgraph. 
\end{theorem}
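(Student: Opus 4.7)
The plan is to prove the bipartite Ramsey theorem by the standard iterative pigeonhole argument, in the spirit of the usual Erd\H{o}s--Szekeres proof of the classical Ramsey theorem. Since $K_{N,N}$ is symmetric in its two sides, it suffices to guarantee that any $2$-coloring of its edges contains a monochromatic subgraph with $k$ vertices on one fixed side and $\ell$ vertices on the other; the roles of $k$ and $\ell$ can then be swapped freely. No result from earlier in the paper is needed.

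First I would fix $N := \ell \cdot 2^{2k-1}$ (any value for which the computations below succeed will do) and enumerate the two sides as $A = \{a_1, a_2, \ldots, a_N\}$ and $B$. Setting $B_0 := B$, I iteratively construct a nested chain $B_0 \supseteq B_1 \supseteq \cdots \supseteq B_{2k-1}$ together with a color $c_i \in \{\mathrm{red}, \mathrm{blue}\}$ assigned to each $a_i$, as follows. At step~$i$, consider the edges joining $a_i$ to the vertices of~$B_{i-1}$; by pigeonhole, at least half of them share a common color, which I call~$c_i$. Let $B_i \subseteq B_{i-1}$ be the set of endpoints of these edges, so $|B_i| \geq |B_{i-1}|/2$ and every edge from $a_i$ into $B_i$ has color~$c_i$. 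Inductively, $|B_i| \geq N / 2^i$, so after $2k-1$ steps one has $|B_{2k-1}| \geq \ell$.

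The final step is another pigeonhole, this time applied to the color sequence $c_1, c_2, \ldots, c_{2k-1}$: at least $k$ of these preferred colors must agree, say $c_{i_1} = c_{i_2} = \cdots = c_{i_k}$. Because $B_{2k-1} \subseteq B_{i_j}$ for every~$j$, every edge from $a_{i_j}$ to $B_{2k-1}$ has this common color. Choosing any $\ell$ vertices in $B_{2k-1}$ then produces a monochromatic $K_{k,\ell}$, completing the proof with $B(k,\ell) \leq \ell \cdot 2^{2k-1}$.

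There is no real obstacle; the only thing to be careful about is the bookkeeping that ensures the filtration $B_{i-1} \supseteq B_i$ and the color assignment $c_i$ stay compatible as $i$ grows, and that one picks $N$ large enough so that $2k-1$ halving steps still leave $|B_{2k-1}| \geq \ell$. If a sharper quantitative bound on $B(k,\ell)$ were wanted, one could use the original argument of Beineke and Schwenk or a probabilistic deletion estimate, but the crude bound obtained above is entirely sufficient for the applications in this paper.
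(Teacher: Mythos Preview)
Your argument is correct: the iterated pigeonhole on the $B$-side followed by a final pigeonhole on the color sequence $c_1,\ldots,c_{2k-1}$ is the standard elementary proof, and your choice $N=\ell\cdot 2^{2k-1}$ makes the arithmetic work out exactly.

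There is nothing to compare against, however: the paper does not prove \cref{thm:bipRamsey}. It is quoted as a known result of Beineke and Schwenk and used as a black box in the proof of \cref{cor:binaryMatroids}. So your write-up supplies more than the paper does here; if anything, you could simply cite the result as the paper does.
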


We also require an alternate framework for low-rank perturbations. Let $M_1$ and $M_2$ be matroids on the same set of elements. If there exists a matroid $M$ with an element~$e$ so that 
\[ M_1=M\setminus e \text{ and }M_2=M/e,\] 
then we say that $M_1$ is an \emph{elementary lift} of $M_2$ and $M_2$ is an \emph{elementary projection} of $M_1$. 
Given binary matroids $M_1$ and $M_2$, we write $\dist(M_1, M_2)$ for the minimum number of elementary lifts and elementary projections needed to transform~$M_1$ into $M_2$, where we additionally require that the larger matroid $M$ is always binary.

 We require the following lemma, which is stated in~\cite{highlyConnMatroidsConj} without proof. A proof can be found in a paper by Grace and van Zwam~\cite[Lemma 2.6]{GZ2018}.

\begin{lemma}[Geelen, Gerards, and Whittle~{\cite[Lemma~2.1]{highlyConnMatroidsConj}}]
\label{lem:liftsProjections} Let $M_1$ and $M_2$ be binary matroids.
    If $M_1$ is a rank-$p$ perturbation of $M_2$, then $\dist(M_1, M_2)\leq 2p$. Conversely, if $\dist(M_1, M_2)\leq p$, then $M_1$ is a rank-$p$ perturbation of $M_2$.
\end{lemma}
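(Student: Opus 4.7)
The plan is to prove both directions by induction on $p$, in each case reducing to the rank-$1$ situation and then composing. The key observation is that a single elementary lift or projection corresponds to a rank-$1$ change in the binary representation: so $p$ operations yield at most a rank-$p$ change, while conversely an individual rank-$1$ perturbation may require up to two elementary operations (explaining the factor of~$2$), by passing through an intermediate matroid of smaller rank.

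For the direction $\dist(M_1, M_2) \leq p \implies$ rank-$p$ perturbation, I would first analyze a single elementary operation. Suppose $N_1 = M\setminus e$ and $N_2 = M/e$, with $M$ represented by $[A \mid c]$ so that $A$ represents $N_1$. If $c = 0$ then $N_2 = N_1$; otherwise choose $i$ with $c_i = 1$, pivot column $c$ to the standard basis vector $e_i$ by row operations, then delete row $i$ and column $e$ to obtain a representation of $N_2$ on the rows $[n]\setminus\{i\}$, padded with a zero row at position~$i$. A direct computation shows that the padded representation of $N_2$ differs from $A$ by the outer product $c \cdot r^T$, where $r$ is the $i$-th row of $A$; this has rank at most~$1$. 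Composing along the $p$-step sequence, after row-padding all representations to a common row set, then gives a rank-$p$ perturbation overall.

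For the direction rank-$p$ perturbation $\implies \dist(M_1,M_2) \leq 2p$, I would decompose $A_1 - A_2 = \sum_{i=1}^p u_i v_i^T$ into rank-$1$ summands and induct, reducing to showing $\dist(M, M') \leq 2$ whenever $M$ is represented by $A$ and $M'$ by $A + uv^T$. The construction is to find an intermediate binary matroid $N$ on the common ground set whose representation $A_N$ takes the form $A_N = A + u \cdot s^T$ for a carefully chosen row vector~$s$ (in the easy case, $s$ is the $i$-th row of $A$ for some $i$ with $u_i = 1$). One then arranges that both $A - A_N$ and $(A + uv^T) - A_N$ take the outer-product form $c \cdot r^T$ with $r$ a row of the corresponding matrix, so that by the computation of the first direction, each of $M \to N$ and $N \to M'$ is realizable by a single elementary lift or projection (extending by a new element with column $c$ and then contracting it).

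The main obstacle is carrying out this construction in the second direction when $v$ does not lie in the row span of $A$: the naive choice of $s$ works only in the ``easy'' case (where in fact a single elementary operation already suffices with $s = v$), while the general case requires a more intricate choice of intermediate matroid $N$, typically one of smaller rank than both $M$ and $M'$, together with row-padding to maintain a consistent row index set. The key technical input is that such an $N$ exists as a genuine binary matroid and that the two differences $A - A_N$ and $(A+uv^T) - A_N$ simultaneously admit the required outer-product forms.
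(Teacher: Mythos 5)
The paper does not prove this lemma itself; it cites a proof by Grace and van Zwam, so I assess your argument on its own terms. Your first direction is essentially correct: the computation showing that a single elementary projection or lift changes the row-padded representation by a matrix of the form $c\,r^T$ (with $r^T$ the pivot row and $c$ the column of the adjoined element) is right, and composing $p$ such rank-one changes over a common padded row set gives $\mathrm{rank}(A_1-A_2)\le p$.

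Your second direction, however, has a genuine gap, which you flag but do not resolve. Insisting that the intermediate representation have the form $A_N = A + u\,s^T$, with the \emph{same} column factor $u$ as the given perturbation $uv^T$, is too restrictive: it forces both differences $A - A_N = u s^T$ and $(A+uv^T)-A_N = u(v+s)^T$ to share the column factor $u$, and one cannot in general make $s^T$ and $(v+s)^T$ simultaneously rows of the appropriate matrices. The correct intermediate changes the \emph{row} space rather than staying a rank-one shift of $A$ with factor $u$. Append a fresh zero row to both $A$ and $A+uv^T$, and let $N$ be the binary matroid represented by $A_N = \bigl[\begin{smallmatrix} A \\ v^T \end{smallmatrix}\bigr]$, i.e.\ $A$ with the new row $v^T$. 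Extending $A_N$ by the column $\bigl(\begin{smallmatrix} 0 \\ 1 \end{smallmatrix}\bigr)$ and contracting that element (pivoting on the new row) recovers $\bigl[\begin{smallmatrix} A \\ 0 \end{smallmatrix}\bigr]$, so $M$ is an elementary projection of $N$. Extending $A_N$ instead by the column $\bigl(\begin{smallmatrix} u \\ 1 \end{smallmatrix}\bigr)$ and contracting (pivoting on the new row adds $v^T$ to each row $j$ with $u_j=1$) recovers $\bigl[\begin{smallmatrix} A+uv^T \\ 0 \end{smallmatrix}\bigr]$, so $M'$ is also an elementary projection of $N$. Hence $\dist(M,M')\le 2$, and induction on $p$ gives the bound $2p$. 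In your framework, the two differences are $\bigl(\begin{smallmatrix} 0 \\ 1 \end{smallmatrix}\bigr)v^T$ and $\bigl(\begin{smallmatrix} u \\ 1 \end{smallmatrix}\bigr)v^T$: both have $v^T$ (a row of $A_N$) as the row factor, but the column factors are \emph{not} both $u$, which is precisely what your ansatz $A_N=A+us^T$ cannot capture. Note also that this $N$ generically has rank one \emph{larger} than both $M$ and $M'$, the opposite of your ``smaller rank'' heuristic (though a dual construction of smaller rank also works).
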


We can now use this set-up to understand pivot-perturbations from a matroidal perspective. We only prove one direction in the following lemma since that is all we require.

\begin{lemma}
\label{lem:pivotElementary}
    Let $p$ be a non-negative integer and let $M_1$ and $M_2$ be binary matroids with respective bases $B_1$ and $B_2$ such that $\mathcal{F}(M_2, B_2)$ is a $p$-pivot-perturbation of $\mathcal{F}(M_1, B_1)$. Then $\dist(M_1, M_2)\leq p$.
\end{lemma}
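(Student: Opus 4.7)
The plan is to use the $p$-pivot-perturbation to produce a single binary matroid containing both $M_1$ and $M_2$ as minors with at most $p$ extra elements, and then to interpolate between $M_1$ and $M_2$ by flipping a contraction to a deletion (or vice versa) one element at a time.

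First I would unpack the definition of $p$-pivot-perturbation: there exists a bipartite graph $\mathcal{G}$ with $|V(\mathcal{G})| \leq |V(\mathcal{F}(M_1,B_1))| + p$ containing both $\mathcal{F}(M_1,B_1)$ and $\mathcal{F}(M_2,B_2)$ as pivot-minors. Since every bipartite graph is a fundamental graph of some binary matroid, there is a binary matroid $M$ and a base $B$ of $M$ with $\mathcal{G}$ strongly isomorphic to $\mathcal{F}(M,B)$. Applying \cref{lem:pivotFundMatroid} in both directions gives that $M_1$ and $M_2$ are both minors of $M$, and $|E(M)| \leq |E(M_i)| + p$ for $i \in \{1,2\}$.

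Next, write $M_i = M/C_i \setminus D_i$ for disjoint $C_i, D_i \subseteq E(M)$. Since $E(M_1) = E(M_2)$ (the fundamental graphs share a vertex set), we have $S := C_1 \cup D_1 = C_2 \cup D_2$ with $|S| \leq p$. Set $T := C_1 \triangle C_2 = D_1 \triangle D_2$, order the elements of $T$ as $e_1,\ldots,e_k$ (where $k \leq |S| \leq p$), and inductively define $(C^{(i)},D^{(i)})$ with $(C^{(0)},D^{(0)}) = (C_1,D_1)$, $(C^{(k)},D^{(k)}) = (C_2,D_2)$, by moving $e_i$ from whichever of $C^{(i-1)},D^{(i-1)}$ it lies in into the other set. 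Set $N_i := M/C^{(i)} \setminus D^{(i)}$, a binary matroid on the ground set $E(M_1)$, with $N_0 = M_1$ and $N_k = M_2$.

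The key step is to show that each consecutive pair $N_{i-1}, N_i$ differs by a single elementary lift/projection. For this I would introduce $P_i := M/(C^{(i-1)} \cap C^{(i)}) \setminus (D^{(i-1)} \cap D^{(i)})$, which is binary since it is a minor of $M$ and has $e_i$ in its ground set. A quick case check (on whether $e_i \in C^{(i-1)}$ or $e_i \in D^{(i-1)}$) shows that $\{N_{i-1},N_i\} = \{P_i/e_i, P_i \setminus e_i\}$, which is exactly the definition of an elementary lift/projection, and both $P_i$ and the resulting $N_{i-1},N_i$ are binary. Chaining these $k \leq p$ steps yields $\dist(M_1,M_2) \leq p$.

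The only substantive thing to verify carefully is the bookkeeping in the previous paragraph: that moving $e_i$ between $C^{(i-1)}$ and $D^{(i-1)}$ corresponds precisely to swapping $P_i/e_i$ with $P_i \setminus e_i$, and that all matroids in the chain remain binary, which follows automatically because each $P_i$ (and hence each $N_i$) is a minor of the binary matroid $M$. I do not foresee any serious obstacle beyond this routine verification.
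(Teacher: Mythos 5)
Your proof is correct, and it takes a genuinely different route from the paper's. The paper argues by induction on $p$: it normalizes so that $\mathcal{F}(M_1,B_1)$ is an \emph{induced} subgraph of $\mathcal{G}$, picks one extra vertex $v$, invokes \cref{lem:bouchetPivot} (the two-ways lemma for pivot-minors) to conclude that $\mathcal{F}(M_2,B_2)$ is a pivot-minor of $\mathcal{G}-v$ or of $\mathcal{G}/v$, and then combines the inductive bound with the observation that $M/v$ and $M\setminus v$ are at $\dist$-distance one. You instead bypass the induction and the two-ways lemma entirely: you pass straight to the matroid side, write $M_i = M/C_i\setminus D_i$ with $C_1\sqcup D_1 = C_2\sqcup D_2 = E(M)\setminus E(M_1)$ of size at most $p$, and explicitly construct the chain of intermediate minors $N_i$ by flipping one element's status (contracted vs.\ deleted) at a time, with $P_i$ serving as the common binary extension witnessing the elementary lift/projection step. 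Your case-check is right: if $e_i \in C^{(i-1)}$ then $P_i = M/C^{(i)}\setminus D^{(i-1)}$, so $P_i/e_i = N_{i-1}$ and $P_i\setminus e_i = N_i$, and all $P_i$ and $N_i$ are binary as minors of $M$. This is arguably cleaner: it makes the chain of elementary moves fully explicit, yields the potentially sharper bound $\dist(M_1,M_2)\leq |C_1\triangle C_2|\leq p$, and avoids any dependence on \cref{lem:bouchetPivot}. The one small point worth stating explicitly is that since every bipartite graph literally equals $\mathcal{F}(M,B)$ for a binary matroid $M$ on ground set $V(\mathcal{G})$ with $B$ one side, and since the pivot-minor relation is defined via literal induced subgraphs, you may take $M_1 = M/C_1\setminus D_1$ and $M_2 = M/C_2\setminus D_2$ as honest equalities rather than isomorphisms, which is what the displacement/chain argument needs.
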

\begin{proof}
    We prove the claim by induction on $p$. The base case when $p=0$ holds by \cref{lem:pivotFundMatroid}. So we may assume that $p\geq 1$ and the lemma holds for all smaller~$p$.
    
    Let $\mathcal{G}$ be a bipartite graph on at most $\abs{E(M_1)}+p$ vertices that contains both $\mathcal{F}(M_1, B_1)$ and $\mathcal{F}(M_2, B_2)$ as pivot-minors. By pivoting in $\mathcal{G}$, we may assume that $\mathcal{F}(M_1, B_1)$ is an induced subgraph of $\mathcal{G}$. 
    Fix an arbitrary vertex $v$ of $\mathcal{G}$ which is not an element of $M_1$. 
    By \cref{lem:bouchetPivot}, the bipartite graph $\mathcal{F}(M_2, B_2)$ is a pivot-minor of at least one of~$\mathcal{G}-v$ and~$\mathcal{G}/v$. 
    If $\mathcal{F}(M_2, B_2)$ is a pivot-minor of~$\mathcal{G}-v$, then $\mathcal{F}(M_2, B_2)$ is a $(p-1)$-pivot-perturbation of~$\mathcal{F}(M_1, B_1)$, and we are done by induction on~$p$. 

    So we may assume that $\mathcal{F}(M_2, B_2)$ is a pivot-minor of $\mathcal{G}/v$. Let $M$ be a binary matroid with a base $B$ such that $\mathcal{F}(M, B)$ is the induced subgraph of $\mathcal{G}$ on vertex set $E(M_1) \cup \{v\}$. By \cref{lem:pivotFundMatroid}, $M_1$ is a minor of $M$, and so either $M_1 = M/v$ or $M_1=M \setminus v$. Moreover, notice that $\mathcal{F}(M_2, B_2)$ is a $(p-1)$-pivot-perturbation of $\mathcal{F}(M, B)/v$ since they are both pivot-minors of $\mathcal{G}/v$. Thus either $\dist(M/v, M_2)\leq p-1$ or $\dist(M\setminus v, M_2)\leq p-1$. In any case, we find that $\dist(M_1, M_2)\leq p$.
\end{proof}

We are ready to prove \cref{cor:binaryMatroids} as a corollary of \cref{thm:mainPivot}. We restate the corollary below for convenience. 
\binaryMatroids*
\begin{proof}
    If $H$ has no edges, then every matroid has $M(kH)$ as a minor for every positive integer~$k$. Thus, we may assume that $H$ has an edge.
    
    If $H$ has no cycles, then any matroid of rank at least $k\abs{E(H)}$ has a minor isomorphic to~$M(kH)$. Moreover, any binary matroid of rank less than $k\abs{E(H)}$ is a rank-$(k\abs{E(H)}-1)$ perturbation of a matroid of rank~$0$. Since $H$ is acyclic and has at least one edge, this matroid does not have a minor isomorphic to $M(H)$. Hence, we may assume that $H$ contains a cycle.

    If every edge of~$H$ is a loop, then any matroid whose dual has rank at least $k\abs{E(H)}$ has a minor isomorphic to $M(kH)$. Moreover, if the dual of a binary matroid $M$ has rank less than $k\abs{E(H)}$, then $M$ is a rank-$(k\abs{E(H)}-1)$ perturbation of a matroid $U$ of rank $\abs{E(M)}$. 
    This is because if $\bigl[ I \mid A\bigr]$ is a matrix representing $M$, then 
    $\bigl[\begin{smallmatrix} I  & A \\ 0 & 0 \end{smallmatrix}\bigr]$ and 
    $\bigl[\begin{smallmatrix} I  & A \\ 0 & I \end{smallmatrix}\bigr]$ are matrices 
    representing $M$ and $U$, respectively.
    Clearly, $U$ has no minor isomorphic to $M(H)$.
    Thus, we may assume that $H$ has at least one non-loop edge.

    Suppose that $H$ has no cycles of length at least $2$. 
    Let $m_0$ be the number of loops of~$H$ and $m_1$ be the number of non-loop edges of $H$.
    By assumption, $m_0, m_1\ge 1$.
    Let $N=B(km_0+1,km_1+1)$ be the integer from the bipartite Ramsey theorem (\cref{thm:bipRamsey}).
    Let $M$ be a binary matroid. 
    Let $B$ be an arbitrary base of $M$.
    Let us fix a representation~$\bigl[ I \mid A\bigr]$ of $M$ over $\GF(2)$, where $I$ is a $B\times B$ identity matrix and $A$ is a $B\times (E(M)\setminus B)$ matrix.
    If $M$ has rank less than $N$, then $M$ is a rank-$(N-1)$ perturbation of a matroid of rank $0$, which has no minor isomorphic to $M(H)$ as $m_1\neq 0$.
    If the dual matroid of $M$ has rank less than~$N$, then $M$ is a rank-$(N-1)$ perturbation of a matroid~$U$ of rank $\abs{E(M)}$, which has no minor isomorphic to $M(H)$ as $m_0\neq 0$, because 
    $\bigl[\begin{smallmatrix} I  & A \\ 0 & 0 \end{smallmatrix}\bigr]$ and 
    $\bigl[\begin{smallmatrix} I  & A \\ 0 & I \end{smallmatrix}\bigr]$ are matrices 
    representing $M$ and~$U$, respectively.
    Therefore, we may assume that the rank of both $M$ and its dual are at least $N$.
    By~\cref{thm:bipRamsey}, there are $X\subseteq B$ and $Y\subseteq E(M)\setminus B$ such that $\abs{X}=km_0+1$ and $\abs{Y}=km_1+1$, and the $X\times Y$ submatrix $A[X,Y]$ of $A$ is the all-$1$ matrix or the zero matrix.
    Let $M'$ be the matroid obtained from $M$ by contracting~$B\setminus X$ and deleting~$E(M)\setminus (B\cup Y)$. 
    Then $\bigl[ I\mid 0 \bigr]$ or $\bigl[ I\mid 1 \bigr]$ is a representation of $M'$ over $\GF(2)$, where $I$ is an $X\times X$ identity matrix and $0$ or $1$ is the $X\times Y$ zero or all-$1$ matrix, respectively.
    Let $e\in X$ and $f\in Y$. 
    In both cases, $M$ has a minor isomorphic to~$M(kH)$; in the first case, $M/e\setminus f$ is isomorphic to~$M(kH)$, and in the second case, $M\setminus e/f$ is isomorphic to~$M(kH)$.
    Therefore, we may assume that $H$ has a cycle of length at least~$2$. 

    Let $T$ be an arbitrary spanning forest of $H$. By \cref{thm:planarBipCircle}, the fundamental graph $\mathcal{F}(M(H),E(T))$ is a bipartite circle graph. 
    Set $\mathcal{H}=\mathcal{F}(M(H),E(T))$ for convenience. Since $H$ contains a cycle of length at least $2$, the bipartite graph $\mathcal{H}$ has at least one edge. 
    Thus, by \cref{thm:mainPivot}, there exists an integer $p_1=p_1(k,\mathcal{H})$ so that each bipartite graph~$\mathcal{G}$ either has a pivot-minor strongly isomorphic to $k\mathcal{H}$, or has a $p_1$-pivot-perturbation $\widetilde{\mathcal{G}}$ which has no pivot-minor strongly isomorphic to $\mathcal{H}$.

    We now prove that the corollary holds with~$p=p_1$. Let $M$ be a binary matroid and $B$ be an arbitrary base of~$M$. Then either $\mathcal{F}(M,B)$ has a pivot-minor strongly isomorphic to $k\mathcal{H}$, or $\mathcal{F}(M,B)$ has a $p_1$-pivot-perturbation $\widetilde{\mathcal{G}}$ which has no pivot-minor strongly isomorphic to $\mathcal{H}$. In the first case, note that $k \mathcal{H}$ is strongly isomorphic to the fundamental graph of $M(kH)$ with respect to the base obtained by taking the $k$ copies of $E(T)$. Thus, by \cref{lem:pivotFundMatroid}, $M(kH)$ is isomorphic to a minor of $M$, and we are done.

    So we may assume that $\mathcal{F}(M,B)$ has a $p_1$-pivot-perturbation $\widetilde{\mathcal{G}}$ which has no pivot-minor strongly isomorphic to $\mathcal{H}$. Let $\widetilde{M}$ be a binary matroid with a base $\widetilde{B}$ so that $\widetilde{G}=\mathcal{F}(\widetilde{M},\widetilde{B})$. By \cref{lem:pivotElementary}, we have $\dist(M, \widetilde{M})\leq p_1$. So by \cref{lem:liftsProjections}, the matroid $\widetilde{M}$ is a rank-$p_1$ perturbation of $M$. Moreover, by \cref{lem:pivotFundMatroid}, $\widetilde{M}$ has no minor isomorphic to~$M(H)$, which completes the proof of \cref{cor:binaryMatroids}.
\end{proof}

In order to prove the rough converse of \cref{cor:binaryMatroids}, we now prove the following helpful lemmas.

\begin{lemma}
\label{lem:converseLifts}
    Let $k \geq 2$ be an integer, and let $M_1$ and $N$ be matroids so that $M_1$ contains a minor isomorphic to $kN$. Then every elementary projection $M_2$ of $M_1$ contains a minor isomorphic to $(k-1)N$.
\end{lemma}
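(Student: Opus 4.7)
The plan is to first reduce to the case where $M_1 = kN$ exactly. Let $M$ be a matroid with an element $e$ such that $M_1 = M \setminus e$ and $M_2 = M/e$. Since $kN$ is a minor of $M_1$, there are disjoint sets $X, Y \subseteq E(M) \setminus \{e\}$ with $M_1/X \setminus Y = kN$. Setting $M' = M/X \setminus Y$, we obtain $M' \setminus e = kN$ and $M'/e = M_2/X \setminus Y$, which is a minor of $M_2$. So it suffices to show $M'/e$ contains $(k-1)N$ as a minor; label the components of $M' \setminus e = kN$ as $N_1, \ldots, N_k$.

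If $e$ is a loop or a coloop of $M'$, then $M'/e = M' \setminus e = kN$, which trivially contains $(k-1)N$. Otherwise, $r(M') = r(M' \setminus e) = k \cdot r(N)$ and $e \in \mathrm{cl}_{M'}(E(kN))$.

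The key step would be to find $i \in \{1, \ldots, k\}$ with $e \notin \mathrm{cl}_{M'}\bigl(\bigcup_{j\neq i} E(N_j)\bigr)$. For such $i$, a rank count gives $r(M' \setminus E(N_i)) = (k-1)r(N) + 1 = r((k-1)N) + 1$, so $e$ is a coloop of $M' \setminus E(N_i)$. Then $(M'/e) \setminus E(N_i) = (M' \setminus E(N_i))/e = (M' \setminus E(N_i)) \setminus e = (k-1)N$, as required.

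The main obstacle is that this $i$ need not be obtainable from an ``obvious'' component disjoint from $e$'s circuits---$e$ may be entangled with every $N_j$. I would resolve this via the following intersection property: if $e \in \mathrm{cl}_{M'}\bigl(\bigcup_{j \in P_1} E(N_j)\bigr)$ and $e \in \mathrm{cl}_{M'}\bigl(\bigcup_{j \in P_2} E(N_j)\bigr)$, then $e \in \mathrm{cl}_{M'}\bigl(\bigcup_{j \in P_1 \cap P_2} E(N_j)\bigr)$. This follows from submodularity of rank applied to $\bigcup_{j \in P_1} E(N_j) \cup \{e\}$ and $\bigcup_{j \in P_2} E(N_j) \cup \{e\}$, using the identity $r\bigl(\bigcup_{j \in P} E(N_j)\bigr) = |P| \cdot r(N)$ for every $P \subseteq \{1,\ldots,k\}$. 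If no suitable $i$ existed, we would have $e \in \mathrm{cl}_{M'}\bigl(\bigcup_{j \neq i} E(N_j)\bigr)$ for every $i$; iterating the intersection property (or applying it to the $k$ sets $P_i = \{1,\ldots,k\} \setminus \{i\}$, whose intersection is empty) would then give $e \in \mathrm{cl}_{M'}(\emptyset)$, contradicting that $e$ is not a loop. Hence the required $i$ exists.
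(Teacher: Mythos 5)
Your proof is correct and follows essentially the same approach as the paper: both reduce to the case $M_1 = kN$, seek a component index $i$ such that $e$ is not spanned by the other components (yielding $(k-1)N$ as a minor of $M_2$), and otherwise use submodularity to conclude $e$ must be a loop or coloop. The only cosmetic difference is that you phrase the submodularity step as an ``intersection property'' of closures via the rank function, while the paper packages the same computation through the connectivity function $\lambda$.
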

\begin{proof}   
    Let $M$ be a matroid with an element $e$ so that $M_1=M\setminus e$ and $M_2=M/ e$.
    Observe that for any element~$f$ of~$M_1$, 
    $M_1\setminus f$ and $M_1/f$ are elementary projections of~$M_2\setminus f$ and~$M_2/f$, respectively.
    Thus, we may assume that $M_1=kN$.
    Let $(X_1,X_2, \ldots, X_k)$ be the partition of~$E(M_1)$ so that $M_1\setminus (E(M_1)\setminus X_i)$ is isomorphic to $N$ for each $i\in [k]$.
    Let $E=E(M)=\bigl(\bigcup_{i=1}^k X_i\bigr)\cup\{e\}$.
    Let $r$ be the rank function of $M$.
    Let $\lambda(Z)=r(Z)+r(E\setminus Z)-r(E)$ be the connectivity function of~$M$. It is well known that $\lambda$ is non-negative and submodular; $\lambda(X)+\lambda(Y)\geq \lambda(X\cup Y)+\lambda(X\cap Y)$ for all subsets~$X$ and~$Y$ of~$E$.
    
    We may assume that $e$ is not a coloop in~$M$, because otherwise $M_1=M_2$. This means that $r(E\setminus e)=r(E)$.

    Suppose that there is $i\in[k]$ such that $E\setminus (X_i\cup\{e\})$ does not span $e$ in~$M$. Then for every subset $Z$ of $E\setminus (X_i\cup \{e\})$, $Z$ does not span~$e$ in~$M$ and therefore $r(Z\cup \{e\})=r(Z)+1$. Therefore    
    $M/e\setminus X_i=M\setminus e\setminus X_i$, proving that $M_2$ contains a minor isomorphic to $(k-1)N$.

    So we may assume that $E\setminus (X_i\cup\{e\})$ spans~$e$ in~$M$ for every $i\in [k]$. 
    Since $X_i$ is a disjoint union of components of $M_1$, we have $r(X_i)+r(E\setminus (X_i\cup \{e\}))=r(E\setminus\{e\})$.
    As $E\setminus (X_i\cup\{e\})$ spans~$e$ in $M$ and $e$ is not a coloop of $M$, we have $\lambda(X_i)=0$.
    By the submodularity of $\lambda$, we have $\lambda(\bigcup_{j=1}^k X_j)=\lambda(\{e\})=0$.
    As $e$ is not a coloop of $M$, we have $r(e)=0$, and therefore $e$ is a loop of $M$, implying that $M_1=M_2$.
\end{proof}

\begin{lemma}
\label{lem:converseLiftsDual}
    Let $k \geq 2$ be an integer, and let $M_1$ and $N$ be matroids so that $M_1$ contains a minor isomorphic to $kN$. Then every elementary lift $M_2$ of $M_1$ contains a minor isomorphic to $(k-1)N$.
\end{lemma}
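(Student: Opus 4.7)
The plan is to derive this immediately from \cref{lem:converseLifts} by matroid duality, since the two statements are exchanged under taking duals and the previous lemma was proved for arbitrary matroids (not just binary ones).

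First I would set up the duality correspondence. Let $M$ be a matroid with element $e$ witnessing the lift, so that $M_1 = M/e$ and $M_2 = M \setminus e$. Using the identities $(M/e)^* = M^*\setminus e$ and $(M\setminus e)^* = M^*/e$, I get $M_1^* = M^*\setminus e$ and $M_2^* = M^*/e$, which is precisely to say that $M_2^*$ is an \emph{elementary projection} of $M_1^*$ witnessed by $M^*$.

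Next I would transfer the minor assumption to the dual side. Because taking a minor commutes with taking the dual, and because $(kN)^* = k(N^*)$, the hypothesis that $M_1$ contains a minor isomorphic to $kN$ is equivalent to saying that $M_1^*$ contains a minor isomorphic to $k(N^*)$. Applying \cref{lem:converseLifts} to $M_1^*$, $M_2^*$, and $N^*$ then yields that $M_2^*$ contains a minor isomorphic to $(k-1)(N^*)$. Dualizing once more and using $((k-1)N^*)^* = (k-1)N$, I conclude that $M_2$ contains a minor isomorphic to $(k-1)N$, as required.

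There is essentially no obstacle here beyond bookkeeping: the proof is a short duality argument and relies only on the previous lemma holding for general (not necessarily binary) matroids, which it does. The only subtlety worth checking is that \cref{lem:converseLifts} really does not secretly use a binary hypothesis anywhere — inspection of its statement confirms this, so the duality reduction is clean.
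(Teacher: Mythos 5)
Your proof is correct and is essentially identical to the paper's: both use the identities $(M/e)^* = M^*\setminus e$ and $(M\setminus e)^* = M^*/e$ to recast the elementary lift as an elementary projection on the dual side, apply \cref{lem:converseLifts} to $M_1^*$, $M_2^*$, $N^*$ using $(kN)^* = k(N^*)$, and dualize back. Your extra remark that \cref{lem:converseLifts} holds for arbitrary matroids is a reasonable sanity check, and indeed the paper's proof of that lemma uses only the rank/connectivity function and never invokes a binary representation.
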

\begin{proof}
    Let $M$ be a matroid with an element $e$ so that $M_1=M/e$ and $M_2 = M \setminus e$. Thus $M_1^*=M^*\setminus e$ and $M_2^*=M^*/ e$, and so $M_2^*$ is an elementary projection of $M_1^*$. 
    Note that $M_1^*$ contains $(kN)^*$ as a minor, and that $(kN)^*=k(N^*)$. Thus, by \cref{lem:converseLifts}, $M_2^*$ contains $(k-1)(N^*)$ as a minor. By taking the dual, we find that $M_2$ contains $(k-1)N$ as a minor.
\end{proof}

Finally, we are ready to prove the rough converse of \cref{cor:binaryMatroids}.

\begin{proposition}
\label{prop:binMatConverse}
    Let $N$ be a matroid and $p$ be a non-negative integer. 
    If a binary matroid~$M$ has a rank-$p$ perturbation~$\widetilde{M}$ with no minor isomorphic to~$N$, then $M$ has no minor isomorphic to~$(2p+1)N$.
\end{proposition}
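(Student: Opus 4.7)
The plan is to proceed by contrapositive. Assume that $M$ contains a minor isomorphic to $(2p+1)N$, and show that $\widetilde{M}$ must contain a minor isomorphic to $N$. This will directly contradict the hypothesis and establish the proposition.

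The main tool is \cref{lem:liftsProjections}, which translates the rank-$p$ perturbation hypothesis into a combinatorial/structural statement: since $\widetilde{M}$ is a rank-$p$ perturbation of $M$, there is a sequence of at most $2p$ elementary lifts and elementary projections (through binary matroids) that transforms $M$ into $\widetilde{M}$. That is, there exist binary matroids $M = M_0, M_1, M_2, \ldots, M_s = \widetilde{M}$ with $s \leq 2p$, such that for each $i \in [s]$, the matroid $M_i$ is either an elementary lift or an elementary projection of $M_{i-1}$.

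Now I will apply \cref{lem:converseLifts} and \cref{lem:converseLiftsDual} along this chain. By induction on $i$, I claim that $M_i$ contains a minor isomorphic to $(2p+1-i)N$ for each $i \in \{0, 1, \ldots, s\}$. The base case $i = 0$ is just the assumption on $M$. For the inductive step, if $M_i$ contains $(2p+1-i)N$ as a minor with $2p+1-i \geq 2$, then since $M_{i+1}$ is either an elementary projection or an elementary lift of $M_i$, one of \cref{lem:converseLifts} or \cref{lem:converseLiftsDual} applies and yields that $M_{i+1}$ contains $(2p - i)N = (2p+1-(i+1))N$ as a minor. (The condition $2p+1-i \geq 2$ holds throughout the induction since $i \leq s \leq 2p$.)

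Setting $i = s$, we conclude that $\widetilde{M} = M_s$ contains a minor isomorphic to $(2p+1-s)N$. Since $s \leq 2p$, we have $2p+1-s \geq 1$, so $\widetilde{M}$ contains a minor isomorphic to $N$, contradicting the hypothesis. There is no serious obstacle here: the heavy lifting was done in \cref{lem:liftsProjections,lem:converseLifts,lem:converseLiftsDual}, and the factor of $2$ in $2p+1$ is exactly tracking the factor of $2$ in the bound $\dist(M, \widetilde{M}) \leq 2p$ from \cref{lem:liftsProjections}.
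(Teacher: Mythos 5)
Your proof is correct and follows the same route as the paper: apply \cref{lem:liftsProjections} to convert the rank-$p$ perturbation into a chain of at most $2p$ elementary lifts/projections, then peel off one copy of $N$ per step via \cref{lem:converseLifts,lem:converseLiftsDual}. Your write-up simply makes explicit the induction that the paper leaves implicit, and correctly verifies the $k\geq 2$ hypothesis at each step.
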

\begin{proof}
    By \cref{lem:liftsProjections}, it suffices to show that for any matroid $M_1$ containing a minor isomorphic to $kN$ for some integer $k \geq 2$, every elementary lift and every elementary projection of $M_1$ contains $(k-1)N$ as a minor. This is exactly \cref{lem:converseLifts,lem:converseLiftsDual}. 
\end{proof}

\section{Acknowledgments}
We would like to thank James Davies for pointing out that \cref{cor:binaryMatroids} is a corollary of \cref{thm:mainPivot}, and for letting us include this result in the paper. We would also like to acknowledge the 2025 Oberwolfach Workshop on Graph Theory and the 2025 Barbados Graph Theory Workshop held at the Bellairs Research Institute. Much of the work in this paper was performed during these workshops.

\bibliographystyle{alphaurl}
\bibliography{vm}

\end{document}